\newtheorem{theorem}{\bf Theorem}[section]
\newtheorem{definition}[theorem]{\bf Definition}
\newtheorem{corollary}{Corollary}[section]
 \newtheorem{counterexample}{Counterexample}[section]
\newtheorem{example}{Example}[section]
\newtheorem{lemma}{Lemma}[section]
 \newtheorem{assumption}{Assumption}[section]
 \newtheorem{set-up}{Set-up:}[section]
\begin{document}
\title{Ordering results of extreme order statistics from multiple-outlier scale models with dependence}
\author{{\large { Sangita {\bf Das}\thanks {Email address (corresponding author):
                sangitadas118@gmail.com}~ and Suchandan {\bf
                Kayal}\thanks {Email address :
                kayals@nitrkl.ac.in,~suchandan.kayal@gmail.com}}} \\
    { \em \small {\it Department of Mathematics, National Institute of
            Technology Rourkela, Rourkela-769008, India.}}}
\date{}
\maketitle
    \noindent{\bf Abstract:}
    In this paper, we focus on stochastic comparisons of extreme order statistics stemming from multiple-outlier scale models with dependence. Archimedean copula is used to model dependence structure among nonnegative random variables. Sufficient conditions are obtained for comparison of the largest order statistics in the sense of the usual stochastic, reversed hazard rate, star and Lorenz orders. The smallest order statistics are also compared with respect to the usual stochastic, hazard rate, star and Lorenz orders. To illustrate the theoretical establishments, some examples are provided.
\\
\\
\noindent{\bf Keywords:} Archimedean copula;
Majorization; Stochastic orders; Star order; Multiple-outlier model.
\\
\\
{\bf Mathematics Subject Classification:} 60E15; 62G30; 60K10

\section{Introduction }
Order statistics play a vital role in many fields such as statistical inference, economics, reliability theory and operations research. Consider a random sample $X_{1},\cdots,X_{n}$ from a population. Then, the $i$th order statistic is denoted by $X_{i:n}$, where $i=1,\cdots,n.$ In reliability theory, the $i$th order statistic represents the lifetime of a $(n-i+1)$-out-of-$n$ system, which functions if at least $n-i+1$ of $n$ components work. In particular, the order statistics $X_{1:n}$ and $X_{n:n}$ represent the lifetimes of series and parallel systems, respectively. Due to the correspondence between the order statistics and the systems' reliability, a lot of effort has been put to study ordering results between order statistics in terms of many well known stochastic orders. In this paper, we deal with the comparison of extreme order statistics arising from dependent multiple-outlier scale models in the sense of the usual stochastic, reversed hazard rate, hazard rate, star and Lorenz orders.
A multiple-outlier model is a collection of random variables
$X_1,\cdots,X_n$ such that $X_i\overset{st}{=} X,~ i =1,\cdots, p$ and $X_i=Y,~ i = p+1,\cdots, n,$ where $ 1 \leq p < n$.
Here, the notation $X_i\overset{st}{=} X$ means that the distributions of $X_i$ and $X$ are same. Due to the robustness of
different estimators of model parameters, multiple-outlier models
have been widely used by many researchers. Now, we present some developments on stochastic comparisons between order statistics arising from multiple-outlier models.
 \cite{kochar2011} considered multiple-outlier exponential models. They showed that more heterogeneity among the scale parameters of the model results more skewed order statistics. \cite{zhao2012} took similar model and obtained ordering results between the largest order statistics with respect to the likelihood ratio, reversed hazard rate, hazard rate and usual stochastic orderings.
\cite{zhao2015comparison} discussed stochastic comparisons of the largest order statistics
from multiple-outlier gamma models in terms of various stochastic orderings such as the likelihood ratio, hazard rate, star and dispersive orders. \cite{kochar2015stochastic} established likelihood ratio ordering between the largest order statistics arising
from independent multiple-outlier scale models. Sufficient conditions
for the comparison of the lifetimes of series systems with respect to dispersive order have been obtained by \cite{fang2016}. They considered that the components of the series systems follow multiple-outlier Weibull models. \cite{aminiSeresht2016} studied multiple-outlier proportional hazard rate models and developed ordering results with respect to the star, Lorenz and dispersive orders. Further, they proved that more heterogeneity among the multiple-outlier components led to a more skewed lifetime of a $k$-out-of-$n$ system consisting of these components. \cite{wang2017} studied an open problem on mean residual life ordering between two parallel systems under multiple-outlier exponential models which was proposed by \cite{balakrishnan2013ordering}.  \\

Let $\{X_1,\cdots,X_{p},X_{p+1},\cdots, X_n\}$ be a collection of $n$ independent random variables following the multiple-outlier exponential model, where $X_{i},$ $i=1,\cdots,p$ follow exponential distribution with parameter $\lambda_1$ and $X_{j},$ $j=p+1,\cdots,n$ follow exponential distribution with parameter $\lambda_2$, with $n=p+q.$  Further, let $\{Y_1,\cdots,Y_{p^*},Y_{p^*+1},\cdots, Y_{n^{*}}\}$ be a collection of $n^*$ independent random variables following the multiple-outlier exponential model, where $Y_{i},$ $i=1,\cdots,p^*$ follow exponential distribution with parameter $\lambda_1^*$ and $Y_{j},$ $j=p^*+1,\cdots,n^*$ follow exponential distribution with parameter $\lambda_2^*$, with $n^*=p^*+q^*.$ Denote the largest order statistics by $X_{n:n}(p, q)$ and  $ Y_{n^*:n^*}(p^*, q^*)$ arising from $\{X_1,\cdots,X_{p},X_{p+1},\cdots, X_n\}$ and $\{Y_1,\cdots,Y_{p^*},Y_{p^*+1},\cdots, Y_{n^{*}}\}$, respectively. Under this set-up, \cite{balakrishnan2016comparisons} obtained conditions under which the likelihood ratio order holds between $X_{n:n}(p, q)$ and  $ Y_{n^*:n^*}(p^*, q^*)$. In particular, for $(p^*,q^*)\succeq^{w}(p,q)$, they showed that
\begin{align}\label{r1}
	(\underbrace{\lambda_{1},\cdots,\lambda_{1}}_{p},\underbrace{\lambda_{2},\cdots,\lambda_{2}}_{q})\succeq^{w} (\underbrace{\lambda^{*}_{1},\cdots,\lambda^{*}_{1}}_{p},\underbrace{\lambda^{*}_{2},\cdots,\lambda^{*}_{2}}_{q})
	\Rightarrow X_{n:n}(p, q)\geq_{lr} Y_{n^*:n^*}(p^{*}, q^{*}),
\end{align}
when $\lambda_1\leq\lambda^{*}_1\leq\lambda^{*}_{2}\leq\lambda_{2}$ and $1\leq p^*\leq p\leq q\leq q^*.$ The authors also extended the result given by \eqref{r1} to the case of the proportional hazard rate models.  Recently, \cite{torrado2017stochastic} developed the comparison result similar to \eqref{r1} for the multiple-outlier scale models when the random variables are independent. It is noted that almost all concerned research in this area has been developed under the assumption of statistically independent component lifetimes. However, there are some practical situations, where the condition of statistically mutual independence among the component lifetimes is evidently unsuitable. For an example, let us consider a mechanical system. The components of the system are suffering a common stress. Then, it is of huge interest to include statistical dependence among component lifetimes into the study of stochastic comparison of the lifetimes of the series and parallel systems. Further, note that due to complexity of working with the dependent random variables, marginal effort was put to the study of dependent multiple-outlier models by the researchers (see \cite{navarro2018}). These are the main motivations to investigate ordering properties of the extreme order statistics arising from multiple-outlier dependent scale components. The dependency structure among the random variables is modeled by the concept of Archimedean copulas.
We recall that a nonnegative random variable $X$ with distribution function $F_X$ is said to follow the scale model if there exists $\lambda>0$ such that $F_X(x)=F(\lambda x)$, where  $F$ is  the baseline distribution function and $\lambda$ is the scale parameter.\\


In this paper, we will develop different ordering results between the largest as well as the smallest order statistics stemming from multiple-outlier dependent scale models with respect to several stochastic orderings such as the usual stochastic, hazard rate, reversed hazard rate, star and Lorenz orders. Let $\{X_1,\cdots,X_{n^{*}_1},X_{{n^{*}_1}+1},\cdots, X_{n^{*}}\}$  be a set of dependent and heterogeneous random observations. The observations are sharing a common Archimedean copula with generator $\psi_1$ and are taken from the multiple-outlier scale model, where for $i=1,\cdots,n^{*}_{1},$ $X_{i}\sim F_{1}(\lambda_1 x)$  and for $j=n^{*}_{1}+1,\cdots,n^{*},$  $X_{j}\sim F_{2}(\lambda_2 x)$, where $\lambda_1,~\lambda_2>0$.  Note that $F_1(.)$ and $F_{2}(.)$ are two different baseline distribution functions. Also, let $\{Y_1,\cdots,Y_{n^{*}_1},Y_{{n^{*}_1}+1},\cdots, Y_{n^{*}}\}$ be another set of dependent and heterogeneous random observations sharing a common Archimedean copula with generator $\psi_2,$ drawn from the multiple-outlier scale model, where for $i=1,\cdots,n^{*}_{1},$ $Y_{i}\sim F_{1}(\mu_1 x)$ and for $j=n^{*}_{1}+1,\cdots,n^{*},$  $Y_{j}\sim F_{2}(\mu_2 x)$, where $\mu_1,~\mu_2>0$.
Denote by $r_1,~\tilde{r}_1$ and $r_2,~\tilde{r}_2$ the hazard rate and reversed hazard rate functions for $F_1$ and $F_2,$ respectively.
Further,
denote $X_{n:n}(n_{1},n_{2})$, $Y_{n^{*}:n^{*}}(n^{*}_{1},n^{*}_{2})$ and $X_{1:n}(n_{1},n_{2})$,  $Y_{1:n^{*}}(n^{*}_{1},n^{*}_{2})$ are the largest and the smallest order statistics, respectively arising  from $\{X_1,\cdots,X_{n_1},
X_{{n_1}+1},\cdots, X_{n}\}$ and  $\{Y_1,\cdots,Y_{n^{*}_1},Y_{{n^{*}_1}+1},\cdots, Y_{n^{*}}\}$, where $1\leq n_{1}\leq n^{*}_{1}\leq n^{*}_{2}\leq n_{2}$, $n=n_1+n_2$ and $n^*=n^*_1+n^*_2.$ We aim to establish sufficient conditions, under which the following implications hold:
\begin{eqnarray*}
(\underbrace{\lambda_{1},\cdots,\lambda_{1}}_{n^*_{1}},\underbrace{\lambda_{2},\cdots,\lambda_{2}}_{n^*_{2}})\succeq^{w}(\underbrace{\mu_{1},\cdots,\mu_{1}}_{n^{*}_{1}},\underbrace{\mu_{2},\cdots,\mu_{2}}_{n^{*}_{2}})&\Rightarrow& Y_{n^{*}:n^{*}}(n^{*}_{1},n^{*}_{2})\leq_{st}[\leq_{rh}]X_{n:n}(n_{1},n_{2}),\\
(\underbrace{\lambda_{1},\cdots,\lambda_{1}}_{n^*_{1}},\underbrace{\lambda_{2},\cdots,\lambda_{2}}_{n^*_{2}})\succeq_{w}(\underbrace{\mu_{1},\cdots,\mu_{1}}_{n^{*}_{1}},\underbrace{\mu_{2},\cdots,\mu_{2}}_{n^{*}_{2}})&\Rightarrow& X_{1:n}(n_{1},n_{2})\leq_{st}Y_{1:n^{*}}(n^{*}_{1},n^{*}_{2})
\end{eqnarray*}
and $$(\underbrace{u_{1},\cdots,u_{1}}_{n^{*}_{1}},\underbrace{u_{2},\cdots,u_{2}}_{n^{*}_{2}})\succeq_{w}(\underbrace{v_{1},\cdots,v_{1}}_{n^{*}_{1}},\underbrace{v_{2},\cdots,v_{2}}_{n^{*}_{2}})
\Rightarrow X_{1:n}(n_{1},n_{2})\leq_{hr}Y_{1:n^*}(n^{*}_{1},n^{*}_{2}),$$
where $u_{i}=\log\lambda_i$ and $v_{i}=\log\mu_i$, $i=1,~2.$\\

The remainder of the paper is rolled out as follows. Some basic definitions and important lemmas are provided in Section $2$. Section $3$ consists of two subsections. In Subsection $3.1$, we obtain sufficient conditions, under which two largest order statistics are comparable  according to the usual stochastic order, reversed hazard rate order, star order and Lorenz order, whereas in Subsection $3.2$, we study the usual stochastic order, hazard rate order, star order and Lorenz order between two smallest order statistics. We also present some examples to illustrate the established results. Finally, we conclude the paper in Section $4$.\\

Throughout the paper, we only concern about nonnegative random variables. Increasing and decreasing mean nondecreasing and nonincreasing, respectively. Also, the prime `$\prime$' stands for the first order derivative.
\section{Basic notions\setcounter{equation}{0}}
In this section, we recall some basic definitions and well known concepts of stochastic orders and majorization. Let  $\boldsymbol{x} =
\left(x_{1},\cdots,x_{n}\right)$ and $\boldsymbol{y} =
\left(y_{1},\cdots,y_{n}\right)$ be two $n$-dimensional vectors such that $\boldsymbol{x}~,\boldsymbol{y}\in\mathbb{A}$, where $\mathbb{A} \subset \mathbb{R}^{n}$ and $\mathbb{R}^{n}$ is
an $n$-dimensional Euclidean space. Also, consider the order coordinates of the vectors $\boldsymbol{x}$ and $\boldsymbol{y}$ as  $x_{1:n}\leq \cdots \leq x_{n:n}$ and
$y_{1:n}\leq\cdots \leq y_{n:n},$  respectively.
\begin{definition}\label{definition2.2}
	A vector $\boldsymbol{x}$ is said to be
	\begin{itemize}
		\item  majorized by another vector $\boldsymbol{y},$ (denoted
		by $\boldsymbol{x}\preceq^{m} \boldsymbol{y}$), if for each $l=1,\cdots,n-1$, we have
		$\sum_{i=1}^{l}x_{i:n}\geq \sum_{i=1}^{l}y_{i:n}$ and
		$\sum_{i=1}^{n}x_{i:n}=\sum_{i=1}^{n}y_{i:n};$
		
		\item weakly submajorized by another vector $\boldsymbol{y},$ denoted
		by $\boldsymbol{x}\preceq_{w} \boldsymbol{y}$, if for each $l=1,\cdots,n$, we have
		$\sum_{i=l}^{n}x_{i:n}\leq \sum_{i=l}^{n}y_{i:n};$
		
		\item weakly supermajorized by  another vector $\boldsymbol{y},$ denoted
		by $\boldsymbol{x}\preceq^{w} \boldsymbol{y}$, if for each $l=1,\cdots,n$, we have
		$\sum_{i=1}^{l}x_{i:n}\geq \sum_{i=1}^{l}y_{i:n}.$
		
		
%
\end{itemize}
\end{definition}
Note that  $\boldsymbol{x}\preceq^{m} \boldsymbol{y}$ implies both  $\boldsymbol{x}\preceq_{w} \boldsymbol{y}$ and  $\boldsymbol{x}\preceq^{w} \boldsymbol{y}.$ For brief introduction of majorization orders and their applications, we refer to \cite{Marshall2011}.
 Now, we present notions of stochastic orderings. Let $X_1$ and $X_2$ be two nonnegative random variables with
probability density functions (PDFs) $f_{X_1}$ and $f_{X_2}$, cumulative density functions (CDFs) $F_{X_1}$ and $F_{X_2}$, survival functions $\bar
F_{X_1}=1-F_{X_1}$ and $\bar F_{X_2}=1-F_{X_2}$, hazard rate functions  $r_{X_1}=f_{X_1}/\bar
F_{X_1}$ and  $r_{X_2}=f_{X_2}/
\bar F_{X_2}$ and reversed hazard rate functions $ \tilde r_{X_1}=f_{X_1}/
F_{X_1}$ and $\tilde r_{X_2}=f_{X_1}/
F_{X_2}$, respectively.


\begin{definition}
    A random variable $X_1$ is said to be smaller than $X_2$ in the
    \begin{itemize}
        \item hazard rate order (denoted by $X_1\leq_{hr}X_2$)
        if $r_{X_1}(x)\geq r_{X_2}(x)$, for all $x>0$;
        \item reversed hazard rate order (denoted by $X_1\leq_{rh}X_2$)
        if $\tilde r_{X_1}(x)\leq \tilde r_{X_2}(x)$, for all $x>0$;
        \item usual stochastic order (denoted by $X_1\leq_{st}X_2$) if
        $\bar F_{X_1}(x)\leq\bar F_{X_2}(x)$, for all $x$;
         \item star order (denoted by $X_{1}\leq_{*}X_{2}$ or $F_{X_1}(x)\leq_{*}F_{X_2}(x)$) if  $F^{-1}_{X_2}F_{X_1}(x)$
        is star shaped in the sense that $\frac{F^{-1}_{X_2}F_{X_1}(x)}{x}$ is increasing in $x$ on the support of $X_1$;
        \item Lorenz order (denoted by $X_1\leq_{Lorenz}X_2$) if
        $$\frac{1}{E(X_1)}\int_{0}^{F^{-1}_{X_1}(u)}x dF_{X_1}(x)\geq\frac{1}{E(X_2)}\int_{0}^{F^{-1}_{X_2}(u)}x dF_{X_2}(x),\text{ for all } u\in (0,1].$$
    \end{itemize}
\end{definition}
Note that both the hazard rate and reversed hazard rate orderings imply the usual stochastic ordering. Also, star order implies Lorenz order (see \cite{marshall2007life}). One may refer to \cite{shaked2007stochastic} for a detailed discussion on various stochastic orderings. The next definition is for the Schur-convex and
Schur-concave functions.
\begin{definition}
    A function $\Psi:\mathbb{R}^n\rightarrow \mathbb{R}$ is said
    to be Schur-convex (Schur-concave) in $\mathbb{R}^n$ if
    $$\boldsymbol {x}\overset{m}{\succeq}\boldsymbol{ y}\Rightarrow
    \Psi(\boldsymbol { x})\geq( \leq)\Psi(\boldsymbol { y}) \text{, for all } \boldsymbol { x},~ \boldsymbol
    { y} \in \mathbb{R}^n.$$
\end{definition}
 Throughout the article, we will use the notations.
$(i)~\mathcal{D}_{+}=\{(x_1,\cdots,x_n):x_{1}\geq
x_{2}\geq\cdots\geq x_{n}>0\}$ and $(ii)~\mathcal{E}_{+}=\{(x_1,\cdots,x_n):0<x_{1}\leq
x_{2}\leq\cdots\leq x_{n}\}.$
 Denote by $h'( z)=\frac{d h(z)}{d z}.$
The following consecutive lemmas due to  \cite{kundu2016some} are useful to prove the results in the subsequent sections. The partial derivative of $h$ with respect to its $k$th argument is denoted by $h_{(k)}(\boldsymbol{ z})=\partial h(\boldsymbol{z})/\partial z_k,$ for $k=1,\cdots,n.$

{\begin{lemma}\label{lem2.1a}
		Let $h:\mathcal{D_+}\rightarrow \mathbb{ R}$ be a function, continuously differentiable on the interior of $\mathcal{D_+}.$ Then, for $\boldsymbol{x},~\boldsymbol{y}\in\mathcal{D_+},$
		$$\boldsymbol{x}\succeq^{m}\boldsymbol{y}\text{ implies } h(\boldsymbol{x})\geq(\leq )~h(\boldsymbol{y}),$$
		if and only if
		$h_{(k)}(\boldsymbol{ z}) \text{ is decreasing (increasing) in } k=1,\cdots,n.$
	\end{lemma}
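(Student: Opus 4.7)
The plan is to prove both directions of the equivalence, treating only the "decreasing/$\geq$" case since the "increasing/$\leq$" case follows by applying it to $-h$.

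For the sufficiency (``if'') direction, I would connect $\boldsymbol{y}$ to $\boldsymbol{x}$ by the line segment $\boldsymbol{z}(t)=t\boldsymbol{x}+(1-t)\boldsymbol{y}$, $t\in[0,1]$. The set $\mathcal{D}_+$ is convex (consecutive differences $z_i(t)-z_{i+1}(t)$ are convex combinations of nonnegative numbers), so the path stays in $\mathcal{D}_+$. By the chain rule and the fundamental theorem of calculus,
\[
h(\boldsymbol{x})-h(\boldsymbol{y})=\int_0^1\sum_{k=1}^n h_{(k)}(\boldsymbol{z}(t))\,(x_k-y_k)\,dt.
\]
The key step is an Abel summation by parts on the integrand. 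Writing $a_k(t)=h_{(k)}(\boldsymbol{z}(t))$ and $S_k=\sum_{i=1}^k(x_i-y_i)$,
\[
\sum_{k=1}^n a_k(t)(x_k-y_k)=\sum_{k=1}^{n-1}\bigl(a_k(t)-a_{k+1}(t)\bigr)S_k+a_n(t)S_n.
\]
Because $\boldsymbol{x},\boldsymbol{y}\in\mathcal{D}_+$ are already in decreasing order, the majorization $\boldsymbol{x}\succeq^m\boldsymbol{y}$ gives $S_k\geq 0$ for $k=1,\dots,n-1$ and $S_n=0$. If $h_{(k)}(\boldsymbol{z})$ is decreasing in $k$, then $a_k(t)-a_{k+1}(t)\geq 0$, so each term in the sum is nonnegative and $h(\boldsymbol{x})\geq h(\boldsymbol{y})$ follows.

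For the necessity (``only if'') direction, I would argue by contradiction: suppose there exists $\boldsymbol{z}^*$ in the interior of $\mathcal{D}_+$ and an index $k\in\{1,\dots,n-1\}$ with $h_{(k)}(\boldsymbol{z}^*)<h_{(k+1)}(\boldsymbol{z}^*)$. Set $\boldsymbol{y}=\boldsymbol{z}^*$ and let $\boldsymbol{x}$ be obtained from $\boldsymbol{y}$ by replacing $(y_k,y_{k+1})$ with $(y_k+\epsilon,y_{k+1}-\epsilon)$. For $\epsilon>0$ small enough, interior-ness ensures $\boldsymbol{x}\in\mathcal{D}_+$, and since only the $k$th partial sum increases (by $\epsilon$) while the total is preserved, we have $\boldsymbol{x}\succeq^m\boldsymbol{y}$. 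A first-order Taylor expansion gives
\[
h(\boldsymbol{x})-h(\boldsymbol{y})=\epsilon\bigl(h_{(k)}(\boldsymbol{y})-h_{(k+1)}(\boldsymbol{y})\bigr)+o(\epsilon),
\]
which is strictly negative for small $\epsilon$, contradicting the assumed implication $h(\boldsymbol{x})\geq h(\boldsymbol{y})$. Hence $h_{(k)}(\boldsymbol{z})\geq h_{(k+1)}(\boldsymbol{z})$ for all interior $\boldsymbol{z}$, as required.

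The main obstacle is the sufficiency direction, specifically recognizing that the Abel/summation-by-parts rearrangement is exactly what is needed to couple the monotonicity of the coordinates of $\nabla h$ with the partial-sum inequalities defining majorization. Once this rearrangement is in hand, the equality condition of majorization absorbs the boundary term cleanly, and both directions reduce to short and essentially calculus-level arguments.
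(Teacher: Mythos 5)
Your proof is correct. Note that the paper itself gives no proof of this lemma --- it is imported verbatim from Kundu et al.\ (2016) --- so there is nothing in the text to compare against; what you have written is the standard Schur--Ostrowski-type argument specialised to the decreasing cone $\mathcal{D}_{+}$ (line-segment representation via the fundamental theorem of calculus plus Abel summation for sufficiency, a two-coordinate $\pm\epsilon$ perturbation for necessity), and both halves are sound: convexity of $\mathcal{D}_{+}$ keeps the path inside the domain, the paper's convention for $\succeq^{m}$ does give $S_k\geq 0$ and $S_n=0$ for vectors already arranged in decreasing order, and interiority of $\boldsymbol{z}^{*}$ guarantees the perturbed point stays in $\mathcal{D}_{+}$ with the $k$th partial sum increased and the total preserved. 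The one technicality worth flagging is in the sufficiency step: the fundamental theorem of calculus is applied along the entire segment, but $h$ is only assumed continuously differentiable on the \emph{interior} of $\mathcal{D}_{+}$; if $\boldsymbol{x}$ and $\boldsymbol{y}$ share a tie pattern (e.g.\ $x_1=x_2$ and $y_1=y_2$) the whole segment lies in the boundary. The standard repair is to prove the inequality first for interior points and extend to $\mathcal{D}_{+}$ by continuity of $h$, or to perturb $\boldsymbol{x},\boldsymbol{y}$ slightly into the interior while preserving the majorization; this is routine and does not affect the validity of your approach.
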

	\begin{lemma}\label{lem2.1b}
		Let $h:\mathcal{E_+}\rightarrow \mathbb{ R}$ be a function, continuously differentiable on the interior of $\mathcal{E_+}.$ Then, for $\boldsymbol{x},~\boldsymbol{y}\in\mathcal{E_+},$
		$$\boldsymbol{x}\succeq^{m}\boldsymbol{y}\text{ implies } h(\boldsymbol{x})\geq(\leq )~h(\boldsymbol{y}),$$
		if and only if
		$h_{(k)}(\boldsymbol{ z}) \text{ is increasing (decreasing) in } k=1,\cdots,n.$
	\end{lemma}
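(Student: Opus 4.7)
The plan is to prove both directions of the equivalence, and it suffices to treat only the ``$\geq$''/``increasing'' version since the other is obtained by replacing $h$ with $-h$. The strategy is a one-dimensional interpolation argument for sufficiency and a small local perturbation for necessity, both of which lean on the convexity and order-structure of $\mathcal{E}_+$.

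For the sufficient direction, I would first observe that $\mathcal{E}_+$ is convex, being cut out by the linear inequalities $0<z_1\leq z_2\leq\cdots\leq z_n$. Hence for $\boldsymbol{x},\boldsymbol{y}\in\mathcal{E}_+$ the segment $\boldsymbol{z}(t):=(1-t)\boldsymbol{y}+t\boldsymbol{x}$, $t\in[0,1]$, lies in $\mathcal{E}_+$. Setting $g(t):=h(\boldsymbol{z}(t))$, $a_i(t):=h_{(i)}(\boldsymbol{z}(t))$, and $B_l:=\sum_{i=1}^l(x_i-y_i)$, the chain rule yields
$$g'(t)=\sum_{i=1}^n a_i(t)(x_i-y_i),$$
which Abel summation (together with $B_n=0$) rewrites as $g'(t)=-\sum_{l=1}^{n-1}\bigl(a_{l+1}(t)-a_l(t)\bigr)B_l$. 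Since the coordinates of $\boldsymbol{x}$ and $\boldsymbol{y}$ are already listed in increasing order, the hypothesis $\boldsymbol{x}\succeq^{m}\boldsymbol{y}$ translates directly into $B_l\leq 0$ for $l=1,\ldots,n-1$; combined with the monotonicity assumption $a_{l+1}(t)\geq a_l(t)$, each summand is nonpositive, so $g'(t)\geq 0$ and therefore $h(\boldsymbol{x})=g(1)\geq g(0)=h(\boldsymbol{y})$.

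For the necessary direction, I would fix $\boldsymbol{z}$ in the interior of $\mathcal{E}_+$ (strict inequalities throughout) and, for any pair $i<j$, consider the ``reverse T-transform'' perturbation $\boldsymbol{z}(\epsilon):=\boldsymbol{z}-\epsilon\boldsymbol{e}_i+\epsilon\boldsymbol{e}_j$ for small $\epsilon>0$. The strict ordering at $\boldsymbol{z}$ leaves room for $\boldsymbol{z}(\epsilon)\in\mathcal{E}_+$, and a one-line check of partial sums gives $\boldsymbol{z}(\epsilon)\succeq^{m}\boldsymbol{z}$. The standing hypothesis then forces $h(\boldsymbol{z}(\epsilon))\geq h(\boldsymbol{z})$; dividing by $\epsilon$ and letting $\epsilon\downarrow 0$ produces $h_{(j)}(\boldsymbol{z})-h_{(i)}(\boldsymbol{z})\geq 0$, which is precisely the required monotonicity of $h_{(k)}$ in $k$.

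The only mildly subtle point --- and the one I would watch most carefully --- is verifying that the interpolation path and the small perturbation both stay inside $\mathcal{E}_+$; both reduce to the observation that either averaging two coordinates or making a small transfer from a smaller coordinate to a larger one preserves the increasing-order constraint. Once that is in hand, the remainder of the argument is Abel summation and a first-order Taylor expansion.
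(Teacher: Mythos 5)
The paper itself does not prove this lemma --- it is imported from \cite{kundu2016some} --- so there is no in-text argument to compare against; judged on its own, your proof is the standard Schur--Ostrowski-type argument on the ordered cone, and both directions are essentially correct: the Abel-summation identity $g'(t)=-\sum_{l=1}^{n-1}\bigl(a_{l+1}(t)-a_l(t)\bigr)B_l$ together with $B_l\le 0$ gives sufficiency, and the transfer $\boldsymbol{z}-\epsilon\boldsymbol{e}_i+\epsilon\boldsymbol{e}_j$ gives necessity. The one point you flag as ``mildly subtle'' is, however, slightly misidentified: the issue is not whether the segment stays in $\mathcal{E}_+$ (it does, by convexity), but whether it stays in the \emph{interior} of $\mathcal{E}_+$, which is the only place $h$ is assumed continuously differentiable. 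If $\boldsymbol{x}$ and $\boldsymbol{y}$ share a tie in the same position --- e.g.\ $\boldsymbol{x}=(1,1,6)$ and $\boldsymbol{y}=(2,2,4)$, for which $\boldsymbol{x}\succeq^{m}\boldsymbol{y}$ --- then the entire segment $(2-t,2-t,4+2t)$ lies in the boundary, and the chain-rule step is not licensed by the stated hypotheses. This is repaired in the usual way, by assuming (as the source effectively does) that $h$ is continuous on $\mathcal{E}_+$ and approximating $\boldsymbol{x},\boldsymbol{y}$ by interior points that preserve the majorization, so it is a technical caveat to record rather than a fatal gap; the necessity half is unaffected since monotonicity of $h_{(k)}$ in $k$ is only meaningful on the interior anyway.
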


The following lemma due to \cite{saunders1978} is useful to establish star order between the order statistics. 
\begin{lemma}\label{lem4}
	Let $\{F_{\lambda}|\lambda\in\mathbb{R}\}$	be a class of distribution functions, such that $F_{\lambda}$ is supported on some interval $(a,b)\subseteq(0,\infty)$ and has density $f_{\lambda}$ which does not vanish on any subinterval of $(a,b)$. Then,
	$$F_{\lambda}\leq_{*} F_{\lambda^*},~~~\lambda\leq \lambda^*$$
	if and only if
	$$\frac{F'_{\lambda}(x)}{xf_{\lambda}(x)}\text{ is decreasing in }x,$$ where $F'_{\lambda}$ is the derivative of $F_{\lambda}$ with respect to $\lambda.$
\end{lemma}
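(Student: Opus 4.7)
The plan is to translate the star-order condition, which is stated in terms of the composition $F_{\lambda^*}^{-1}\circ F_\lambda$, into an infinitesimal differential condition on the one-parameter family $\{F_\lambda\}$ by means of implicit differentiation of the quantile function. Throughout the argument, the hypothesis that $f_\lambda$ does not vanish on any subinterval of $(a,b)$ is used to guarantee that $F_\lambda:(a,b)\to(0,1)$ is a smooth, strictly increasing bijection so that $F_\lambda^{-1}$ is well defined and differentiable.

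First I would introduce the quantile $Q(\lambda,u):=F_\lambda^{-1}(u)$ for $u\in(0,1)$ and rewrite the star-order inequality in $u$-coordinates. Setting $u=F_\lambda(x)$ gives $F_{\lambda^*}^{-1}(F_\lambda(x))/x = Q(\lambda^*,u)/Q(\lambda,u)$, and since $u\mapsto x$ is strictly increasing, $F_{\lambda^*}^{-1}(F_\lambda(x))/x$ is increasing in $x$ on $(a,b)$ if and only if $Q(\lambda^*,u)/Q(\lambda,u)$ is increasing in $u$ on $(0,1)$. Hence $F_\lambda\leq_{*}F_{\lambda^*}$ is equivalent to the map $u\mapsto \log Q(\lambda^*,u)-\log Q(\lambda,u)$ being nondecreasing on $(0,1)$.

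Next I would pass from this two-parameter statement to a pointwise-in-$\lambda$ differential condition. Writing
\[
\log Q(\lambda^*,u)-\log Q(\lambda,u)=\int_{\lambda}^{\lambda^*}\partial_\eta\log Q(\eta,u)\,d\eta,
\]
the property holds for every pair $\lambda\le\lambda^*$ if and only if, for each $\eta$, the function $u\mapsto \partial_\eta\log Q(\eta,u)$ is nondecreasing on $(0,1)$. The reverse implication is immediate by letting $\lambda^*\downarrow\lambda$; the forward direction follows by Fubini/monotonicity applied to the integral representation. Then I would compute $\partial_\eta Q$ by implicit differentiation of $F_\eta(Q(\eta,u))=u$, which yields $F'_\eta(Q)+f_\eta(Q)\,\partial_\eta Q=0$, so $\partial_\eta Q=-F'_\eta(Q)/f_\eta(Q)$, and therefore
\[
\partial_\eta\log Q(\eta,u)=-\frac{F'_\eta\bigl(Q(\eta,u)\bigr)}{Q(\eta,u)\,f_\eta\bigl(Q(\eta,u)\bigr)}.
\]

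Finally I would change variables back to $x$: since $x=Q(\eta,u)$ is a strictly increasing bijection from $(0,1)$ to $(a,b)$, the monotonicity in $u$ of $\partial_\eta\log Q(\eta,u)$ is equivalent to $-F'_\eta(x)/(x f_\eta(x))$ being nondecreasing in $x$, equivalently $F'_\eta(x)/(x f_\eta(x))$ being nonincreasing in $x$. Chaining the three equivalences gives the stated characterization. The main obstacle I anticipate is the rigorous handling of Step 2, namely showing that the two-parameter condition collapses to a pointwise-in-$\eta$ condition; this requires invoking enough regularity of $\lambda\mapsto F_\lambda$ to differentiate under limits, which the statement implicitly affords by assuming $F'_\lambda$ exists, together with care at the endpoints $u\in\{0,1\}$ where $Q$ may be unbounded.
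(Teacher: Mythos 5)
The paper does not prove this lemma at all: it is quoted verbatim as a known result of Saunders (1978), so there is no in-paper argument to compare against. Your reconstruction is correct and is essentially the classical quantile-differentiation proof of that result. The three equivalences you chain together are all sound: passing to quantile coordinates $Q(\lambda,u)=F_\lambda^{-1}(u)$ turns star-shapedness of $F_{\lambda^*}^{-1}\circ F_\lambda$ into monotonicity of $u\mapsto\log Q(\lambda^*,u)-\log Q(\lambda,u)$; the integral representation in $\eta$ reduces the two-parameter statement to monotonicity of $u\mapsto\partial_\eta\log Q(\eta,u)$ for each $\eta$ (one direction by integrating a family of nondecreasing functions, the other by taking the difference quotient as $\lambda^*\downarrow\lambda$, with the regularity you correctly flag as implicitly assumed); and implicit differentiation of $F_\eta(Q(\eta,u))=u$ identifies $\partial_\eta\log Q(\eta,u)=-F'_\eta(x)/(xf_\eta(x))$ at $x=Q(\eta,u)$, so that the required monotonicity in $u$ is exactly the stated decrease of $F'_\lambda(x)/(xf_\lambda(x))$ in $x$. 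One small slip: in Step 2 you label the implication obtained by letting $\lambda^*\downarrow\lambda$ as the ``reverse'' implication and the integration argument as the ``forward'' one, which is backwards relative to the sentence it follows; the substance of both implications is nevertheless correct, so this is cosmetic. The only genuine caveats are the ones you already name, namely that the family must be regular enough in $\lambda$ for the fundamental theorem of calculus and the limit interchange, and that the common support $(a,b)$ must not depend on $\lambda$ --- both of which are implicit in the hypotheses of the lemma as stated.
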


To model the dependency structure among the random variables, the concept of copulas plays a vital role. One of the important characteristics of the copula is that it involves the information of the dependencies between the random variables apart from the behavior of the marginal distributions. Archimedean copulas are important class of copulas. These are used widely because of its simplicity.
Let $F$ and $\bar F$ be the joint distribution function and the joint survival function of the random vector $\boldsymbol{X}=(X_1,\cdots,X_n)$. Suppose there exist functions $C(\boldsymbol{z}):[0,1]^n\rightarrow [0,1]$ and
$\hat {C}(\boldsymbol{z}):[0,1]^n\rightarrow [0,1]$ such that for all $ x_i,~i\in \mathcal I_n, $ where $\mathcal I_n$ is the index set
$$ F(x_1,\cdots,x_n)=C(F_1(x_1),\cdots,F_n(x_n))$$
and
$$\bar{F}(x_1,\cdots,x_n)=\hat{C}(\bar{F_1}(x_1),\cdots,\bar{F_n}(x_n))$$ hold, where $\boldsymbol{z}=(z_1,\cdots,z_n)$.
Then, $C(\boldsymbol{z})$ and $\hat{C}(\boldsymbol{z})$ are said to be the  copula and survival copula of $\boldsymbol{X}$, respectively. Here, $F_1,\cdots,F_n$ and $\bar{F_1},\cdots,\bar{F_n}$ are the univariate marginal distribution functions and survival functions of the random variables $X_1,\cdots,X_n$, respectively.\\
Now, let $\psi:[0,\infty)\rightarrow[0,1]$ be a nonincreasing and continuous function, satisfying $\psi(0)=1$ and $\psi(\infty)=0.$ Also, let $\psi={\phi}^{-1}=\text{sup}\{x\in \mathcal R:\phi(x)>v\}$ be the right continuous inverse. Further, suppose $\psi$ satisfies the conditions $(i)$ $(-1)^i{\psi}^{i}(x)\geq 0,~ i=0,~1,\cdots,d-2$ and  $(ii)$ $(-1)^{d-2}{\psi}^{d-2}$ is nonincreasing and convex. That implies the generator $\psi$ is $d$-monotone. Then, a copula $C_{\psi}$ is said to be an Archimedean copula if it can be written as the following form $$C_{\psi}(v_1,\cdots,v_n)=\psi({\phi(v_1)},\cdots,\phi(v_n)),~\text{ for all } v_i\in[0,1],~i\in\mathcal{I}_n.$$ For further discussion on Archimedean copulas, one may refer to \cite{nelsen2006introduction} and \cite{mcneil2009multivariate}.
 
Next lemma is taken from \cite{li2015ordering}, which has been used to prove the results in Theorems \ref{th1}, \ref{th4}, \ref{th2} and \ref{th6}.
\begin{lemma}\label{lem3.1}
	For two $n$-dimensional Archimedean copulas $C_{\psi_1}$ and  $C_{\psi_2}$, if $\phi_2\circ\psi_1$ is super-additive, then $C_{\psi_1}(\boldsymbol{z})\leq C_{\psi_2}(\boldsymbol{z})$, for all $\boldsymbol{z}\in[0,1]^n.$ A function $f$ is said to be super-additive, if $ f(x)+f(y)\leq f(x+y),$ for all $x$ and $y$ in the domain of $f.$
\end{lemma}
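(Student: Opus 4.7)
The plan is to unwind the definition of an Archimedean copula, substitute variables so that the hypothesis (super-additivity of $\phi_2\circ\psi_1$) can be applied directly, and then exploit the monotonicity of $\psi_2$ to flip the inequality into the form we want. Concretely, fix $\boldsymbol{z}=(v_1,\ldots,v_n)\in[0,1]^n$ and recall that
$$C_{\psi_i}(\boldsymbol{z})=\psi_i\!\left(\sum_{k=1}^n \phi_i(v_k)\right),\qquad i=1,2,$$
with $\phi_i=\psi_i^{-1}$. First I would set $x_k=\phi_1(v_k)\ge 0$, so that $v_k=\psi_1(x_k)$ and hence $\phi_2(v_k)=(\phi_2\circ\psi_1)(x_k)$. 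Writing $g:=\phi_2\circ\psi_1$, the target inequality $C_{\psi_1}(\boldsymbol{z})\le C_{\psi_2}(\boldsymbol{z})$ becomes
$$\psi_1\!\left(\sum_{k=1}^n x_k\right)\le \psi_2\!\left(\sum_{k=1}^n g(x_k)\right).$$

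Next I would iterate the super-additivity hypothesis. The assumption gives $g(a)+g(b)\le g(a+b)$ for any admissible $a,b$, so a routine induction on $n$ yields
$$\sum_{k=1}^n g(x_k)\ \le\ g\!\left(\sum_{k=1}^n x_k\right)\ =\ \phi_2\!\left(\psi_1\!\left(\sum_{k=1}^n x_k\right)\right)\ =\ \phi_2\bigl(C_{\psi_1}(\boldsymbol{z})\bigr).$$
This is the only quantitative step of the argument; the inductive extension of super-additivity from two summands to $n$ is the only mildly delicate piece, but it is entirely mechanical.

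Finally I would apply $\psi_2$ to both sides. Because $\psi_2$ is nonincreasing (one of the defining properties of an Archimedean generator), the inequality reverses, giving
$$\psi_2\!\left(\sum_{k=1}^n g(x_k)\right)\ \ge\ \psi_2\bigl(\phi_2(C_{\psi_1}(\boldsymbol{z}))\bigr)\ =\ C_{\psi_1}(\boldsymbol{z}).$$
Since the left-hand side equals $\psi_2\!\bigl(\sum_k \phi_2(v_k)\bigr)=C_{\psi_2}(\boldsymbol{z})$, we conclude $C_{\psi_1}(\boldsymbol{z})\le C_{\psi_2}(\boldsymbol{z})$, which is precisely what is claimed.

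The main obstacle, such as it is, is bookkeeping: one must keep straight which of $\psi_i$ and $\phi_i$ is decreasing, ensure that the arguments $x_k=\phi_1(v_k)$ live in the domain where super-additivity of $g$ is posited, and check that the inductive extension of super-additivity respects this domain (which it does since the $x_k$ and their partial sums are all nonnegative and $g$ is defined on $[0,\infty)$ through $\phi_2\circ\psi_1$). No additional analytic input beyond monotonicity of $\psi_2$ and the super-additive hypothesis is needed.
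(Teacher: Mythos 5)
Your argument is correct and is exactly the standard proof of this comparison result: iterate super-additivity of $g=\phi_2\circ\psi_1$ to get $\sum_k g(x_k)\le g\bigl(\sum_k x_k\bigr)$, then apply the nonincreasing $\psi_2$ and use $\psi_2\circ\phi_2=\mathrm{id}$ on $[0,1]$. The paper itself states this lemma without proof, importing it from the cited reference, and your derivation coincides with the argument given there, so there is nothing to add.
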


\section{Main Results\setcounter{equation}{0}}
This section is completely devoted to establish sufficient conditions, under which the extreme order statistics arising from multiple outlier dependent scale models are comparable in different stochastic senses. The usual stochastic, hazard rate, reversed hazard rate,  star and Lorenz orders are used in this sequel.
Throughout this section, we denote two dimensional vectors by bold symbols. For example, $\boldsymbol{\lambda}=(\lambda_{1},\lambda_{2})$ and         $\boldsymbol{\mu}=(\mu_{1},\mu_{2}).$

\subsection{Orderings between the largest order statistics}
This subsection addresses ordering results between the largest order statistics arising from multiple-outlier models.  The following three consecutive theorems present different conditions, for which the usual stochastic order between the largest order statistics holds. Before presenting the first result, we state the following assumption.
\begin{assumption}\label{ass1}
	Let $X_{1},\cdots,X_{n^{*}}$ be  $n^*$ dependent nonnegative random variables sharing Archimedean copula with generator $\psi_1,$ with $X_{i}\sim F_1(x\lambda_1),$ for $i=1,\cdots,n^*_1$ and  $X_{j}\sim F_2(x\lambda_2),$ for $j={n^*_{1}+1},\cdots,n^*$. Also, 	let $Y_{1},\cdots,Y_{n^{*}}$ be  $n^*$ dependent non-negative random variables sharing Archimedean copula with generator $\psi_2,$ with $Y_{i}\sim F_1(x\mu_1),$ for $i=1,\cdots,n^*_1$ and $Y_{j}\sim F_2(x\mu_2),$ for $j={n^*_{1}+1},\cdots,n^*.$ Here, $n^{*}_{1}+n^{*}_{2}=n^{*}$, $\psi_{1}=\phi^{-1}_{1}$ and $\psi_{2}=\phi^{-1}_{2}$.
\end{assumption}

\begin{theorem}\label{th1}
 Under the set-up as in Assumption \ref{ass1}, let $\tilde{r}_1(x)\geq(\leq)\tilde{r}_2(x)$ and $n^{*}_1\geq(\leq)n^{*}_2$. Then,
 $$(\underbrace{\lambda_{1},\cdots,\lambda_{1}}_{n^{*}_{1}},\underbrace{\lambda_{2},\cdots,\lambda_{2}}_{n^{*}_{2}})\succeq^{w}
 (\underbrace{\mu_{1},\cdots,\mu_{1}}_{n^{*}_{1}},\underbrace{\mu_{2},\cdots,\mu_{2}}_{n^{*}_{2}})\Rightarrow Y_{n^{*}:n^{*}}(n^{*}_{1},n^{*}_{2})\leq_{st}X_{n^{*}:n^{*}}(n^{*}_{1},n^{*}_{2}),$$ provided $\boldsymbol{ \lambda},~\boldsymbol{\mu}\in\mathcal{E}_+~(\mathcal{D}_+),$ $\phi_2\circ\psi_1$ is super-additive,
 $\psi_1\text{ or }\psi_2$ is log-convex and  $\tilde{r}_1(x)\text{ or }\tilde{r}_2(x)$ is decreasing.
\end{theorem}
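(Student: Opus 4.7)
My plan is to combine the Archimedean-copula comparison from Lemma \ref{lem3.1} with a Schur-concavity argument tailored to the two-group structure via Lemma \ref{lem2.1b}. First, I write the CDFs in closed form through the copulas,
\begin{align*}
F_{X_{n^*:n^*}}(x)&=\psi_1\!\left(n_1^*\,\phi_1(F_1(\lambda_1x))+n_2^*\,\phi_1(F_2(\lambda_2x))\right),\\
F_{Y_{n^*:n^*}}(x)&=\psi_2\!\left(n_1^*\,\phi_2(F_1(\mu_1x))+n_2^*\,\phi_2(F_2(\mu_2x))\right).
\end{align*}
Super-additivity of $\phi_2\circ\psi_1$ together with Lemma \ref{lem3.1} gives $C_{\psi_1}\le C_{\psi_2}$, and hence $F_{X_{n^*:n^*}}(x)\le\Phi(\boldsymbol{\lambda}',x)$ while $F_{Y_{n^*:n^*}}(x)=\Phi(\boldsymbol{\mu}',x)$, where
\[
\Phi(\boldsymbol{z},x):=\psi_2\!\left(\sum_{i=1}^{n_1^*}\phi_2(F_1(z_ix))+\sum_{i=n_1^*+1}^{n^*}\phi_2(F_2(z_ix))\right)
\]
and $\boldsymbol{\lambda}',\boldsymbol{\mu}'\in\mathbb{R}^{n^*}$ are the expanded parameter vectors. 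It then suffices to prove $\Phi(\boldsymbol{\lambda}',x)\le\Phi(\boldsymbol{\mu}',x)$.

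To convert the weak supermajorization $\boldsymbol{\mu}'\preceq^w\boldsymbol{\lambda}'$ into ordinary majorization, I use the standard characterization producing $\boldsymbol{\tau}\in\mathcal{E}_+$ with $\boldsymbol{\tau}\le\boldsymbol{\mu}'$ componentwise and $\boldsymbol{\tau}\preceq^m\boldsymbol{\lambda}'$. Since $\psi_2$ and $\phi_2$ are both decreasing, $\Phi$ is coordinate-wise increasing in $\boldsymbol{z}$, so $\Phi(\boldsymbol{\tau},x)\le\Phi(\boldsymbol{\mu}',x)$ and it remains to prove $\Phi(\boldsymbol{\lambda}',x)\le\Phi(\boldsymbol{\tau},x)$. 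By Lemma \ref{lem2.1b}, this last inequality reduces to showing that $\Phi_{(k)}(\boldsymbol{z},x)$ is decreasing in $k\in\{1,\dots,n^*\}$ on the interior of $\mathcal{E}_+$.

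A direct computation yields $\Phi_{(k)}(\boldsymbol{z},x)=\psi_2'(S(\boldsymbol{z},x))\,\phi_2'(F_{j_k}(z_kx))\,f_{j_k}(z_kx)\,x$, with $j_k=1$ for $k\le n_1^*$ and $j_k=2$ otherwise. The factor $\psi_2'(S)$ is independent of $k$, so the required monotonicity splits into (a) within each group, $T_i(t):=-\phi_2'(F_i(tx))f_i(tx)$ is decreasing in $t$, and (b) at the group boundary, $T_1(s)\ge T_2(s')$ whenever $s\le s'$. For (a), set $g_i(t):=\phi_2(F_i(tx))$, so that $T_i(t)=-g_i'(t)/x$; monotonicity of $T_i$ becomes convexity of $g_i$, which after substituting $\phi_2'=1/(\psi_2'\circ\phi_2)$ and writing $u=\phi_2(F_i(tx))$ is equivalent to $f_i'(tx)/f_i(tx)^2\le\psi_2''(u)/(\psi_2'(u))^2$. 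The hypothesis $\tilde r_i'\le 0$ yields $f_i'/f_i^2\le 1/F_i=1/\psi_2(u)$, and log-convexity of $\psi_2$, i.e.\ $\psi_2\psi_2''\ge(\psi_2')^2$, yields $1/\psi_2(u)\le\psi_2''(u)/(\psi_2'(u))^2$; chaining these produces (a). For (b), rewriting $T_i$ through $\tilde r_i$ reduces the inequality to the hypotheses $\tilde r_1\ge\tilde r_2$ and $n_1^*\ge n_2^*$. The bracketed case ($\boldsymbol{\lambda},\boldsymbol{\mu}\in\mathcal{D}_+$ with reversed inequalities) is handled symmetrically using Lemma \ref{lem2.1a}.

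The main obstacle is step (a): two second-order terms of opposite sign must be controlled, so both log-convexity of the generator and monotonicity of the reversed hazard rate are needed. The chained inequalities above are essentially tight, which explains why both hypotheses appear together in the statement. A secondary nuisance is verifying that the intermediate vector $\boldsymbol{\tau}$ can be chosen in $\mathcal{E}_+$, but this follows from the usual \emph{T}-transform construction applied to already-sorted vectors.
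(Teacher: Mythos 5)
Your overall strategy coincides with the paper's: a copula step (super-additivity of $\phi_2\circ\psi_1$ via Lemma \ref{lem3.1}) followed by a parameter step (coordinatewise monotonicity plus Schur-concavity of the system distribution function in the scale vector, converted from weak supermajorization to plain majorization by an intermediate vector, exactly as in Theorem A.8 of \cite{Marshall2011}). Two differences are worth recording. First, you transfer everything to the generator $\psi_2$ and hence invoke log-convexity of $\psi_2$, whereas the paper bounds $A(\boldsymbol{\mu},\psi_1,x)\le B(\boldsymbol{\mu},\psi_2,x)$ and runs the majorization argument with $\psi_1$; since the hypothesis is ``$\psi_1$ or $\psi_2$ is log-convex,'' each argument covers one branch of the ``or'' and the other is symmetric. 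Second, you verify the derivative condition of Lemma \ref{lem2.1b} on all of $\mathcal{E}_+$ --- which is what that lemma literally requires, since your intermediate vector $\boldsymbol{\tau}$ need not have the two-group form --- while the paper checks it only on vectors $(\lambda_1,\dots,\lambda_1,\lambda_2,\dots,\lambda_2)$, where the within-group differences vanish identically. On this point your treatment is the more careful one. A harmless side effect is that the counts $n_1^*,n_2^*$ never enter your $\Phi_{(k)}$, so your boundary inequality (b) does not in fact use $n_1^*\ge n_2^*$, contrary to what you assert.

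The one place where your argument needs more than the stated hypotheses is step (a). The within-group monotonicity of $T_i(t)=\tilde r_i(tx)\bigl(-\psi_2(u)/\psi_2'(u)\bigr)\big|_{u=\phi_2(F_i(tx))}$ requires $\tilde r_i$ decreasing for \emph{each} $i\in\{1,2\}$: the second factor is nonnegative and decreasing in $t$ by log-convexity, but the product can increase if $\tilde r_i$ does. The theorem assumes only that ``$\tilde r_1$ or $\tilde r_2$ is decreasing.'' The paper never meets this obstacle because on two-group vectors its Cases I and II are identically zero and only the cross-group comparison remains, which can be chained through whichever of $\tilde r_1,\tilde r_2$ is monotone. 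So, as written, your proof establishes the theorem under the stronger hypothesis that both reversed hazard rates are decreasing; to recover the stated ``or'' you would either have to keep the majorization path inside the two-group submanifold (which needs a multiple-outlier-specific majorization lemma rather than Lemma \ref{lem2.1b} on all of $\mathcal{E}_+$) or justify step (a) differently for the group whose reversed hazard rate is not assumed monotone.
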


\begin{proof}
The distribution functions of $X_{n^{*}:n^{*}}(n^{*}_1,n^{*}_2)$ and $Y_{n^{*}:n^{*}}(n^{*}_1,n^{*}_2)$ are respectively given by
$$F_{X_{n^{*}:n^{*}}}(n^{*}_1,n^{*}_2)(x)=\psi_1\left[n^{*}_1\phi_1\left(F_1\left(x\lambda_1\right)\right)+n^{*}_2\phi_1\left(F_2\left(x\lambda_2\right)\right) \right]$$ and
$$F_{Y_{n^{*}:n^{*}}}(n^{*}_1,n^{*}_2)(x)=\psi_2\left[n^{*}_1\phi_2\left(F_1\left(x\mu_1\right)\right)+n^{*}_2\phi_2\left(F_2\left(x\mu_2\right)\right) \right].$$
Denote $A(\boldsymbol{ \lambda},\psi_1,x)=F_{X_{n^{*}:n^{*}}}(n^{*}_1,n^{*}_2)(x)$ and $B(\boldsymbol{ \mu},\psi_2,x)=F_{Y_{n^{*}:n^{*}}}(n^{*}_1,n^{*}_2)(x).$ Using the fact that $\phi_2\circ\psi_1$ is super-additive, one can easily obtain
$A(\boldsymbol{ \mu},\psi_1,x)\leq B(\boldsymbol{ \mu},\psi_2,x).$ Therefore, to prove the desired result, we have to show that
 $A(\boldsymbol{ \lambda},\psi_1,x)\leq A(\boldsymbol{ \mu},\psi_1,x).$ This is equivalent to establish that the function $A(\boldsymbol{ \lambda},\psi_1,x)$ is increasing and Schur-concave with respect to $\boldsymbol{ \lambda}$ (see Theorem $A.8$ of \cite{Marshall2011}). Further,
on differentiating $A(\boldsymbol{ \lambda},\psi_1,x)$ with respect to $\lambda_1$ partially, we get
\begin{equation}\label{eq1}
\frac{\partial A(\boldsymbol{ \lambda},\psi_1,x)}{\partial\lambda_1}=xn^{*}_1{\tilde{r}_1(x\lambda_{1})}\frac{{\psi_1}\left[\phi_1\left[F_1\left(x\lambda_1\right)\right]\right]}{{\psi}'_1\left[\phi_1\left[F_1\left(x\lambda_1\right)\right]\right]}{\psi}'_1\left[n^{*}_1\phi_1\left(F_1\left(x\lambda_1\right)\right)+n^{*}_2\phi_1\left(F_2\left(x\lambda_2\right)\right) \right].
\end{equation}
 From \eqref{eq1}, it is not difficult to check that $\frac{\partial A(\boldsymbol{ \lambda},\psi_1,x)}{\partial\lambda_1}\geq 0.$ Similarly, $\frac{\partial A(\boldsymbol{ \lambda},\psi_1,x)}{\partial\lambda_2}\geq 0.$ Thus, $A(\boldsymbol{ \lambda},\psi_1,x)$ is increasing in $\lambda_i$, for $i=1,2.$ To establish Schur-concavity of $A(\boldsymbol{ \lambda},\psi_1,x),$ in view of Lemma \ref{lem2.1b} (Lemma \ref{lem2.1a}), we only need to show that
 for  $1\leq i\leq j\leq n^{*},$ the following inequality holds:
	\begin{equation}\label{eq3.2}
	\frac{\partial A(\boldsymbol{ \lambda},\psi_1,x)}{\partial\lambda_i}-\frac{\partial A(\boldsymbol{ \lambda},\psi_1,x)}{\partial\lambda_j}\geq(\leq) 0\text{, for }\boldsymbol{ \lambda}\in\mathcal{E}_+~(\mathcal{D}_+).
	\end{equation}
Next, consider three cases.\\
Case I: For $1\leq i\leq j\leq n^{*}_1 ,$ $\lambda_i=\lambda_j=\lambda_1.$ In this case, $\frac{\partial A(\boldsymbol{ \lambda},\psi_1,x)}{\partial\lambda_i}-\frac{\partial A(\boldsymbol{ \lambda},\psi_1,x)}{\partial\lambda_j}=0.$\\
Case II: For $n^{*}_{1}+1\leq i\leq j\leq n^{*} ,$  $\lambda_i=\lambda_j=\lambda_2.$ Here, $\frac{\partial A(\boldsymbol{ \lambda},\psi_1,x)}{\partial\lambda_i}-\frac{\partial A(\boldsymbol{ \lambda},\psi_1,x)}{\partial\lambda_j}=0.$\\
Case III: For $1\leq i\leq n^{*}_1 $ and $n^{*}_{1}+1\leq j\leq n^{*},$  $\lambda_i=\lambda_1$ and $\lambda_j=\lambda_2$.
For this case, consider $\lambda_1\leq(\geq)\lambda_2$, which implies $\phi_{1}(F_1(x\lambda_1))\geq(\leq)\phi_{1}(F_1(x\lambda_2)).$ Further, under the given assumption, we get  $\phi_{1}(F_1(x\lambda_2))\geq(\leq)\phi_{1}(F_2(x\lambda_2)).$ Hence, $\phi_{1}(F_1(x\lambda_1))\geq(\leq)\phi_{1}(F_2(x\lambda_2)).$ Again, $\psi_{1}$ is log-convex. Therefore, we have
\begin{equation}\label{eq2}
	-\frac{\psi_{1}(w)}{\psi_{1}'(w)}\Big|_{w=\phi_{1}[F_{1}(x\lambda_{1})]}\geq(\leq) -\frac{\psi_{1}(w)}{\psi_{1}'(w)}\Big|_{w=\phi_{1}[F_{2}(x\lambda_{2})]}.
\end{equation}
Moreover, $\tilde r_{1}(w)$ is decreasing in $w>0$, hence
\begin{equation}\label{eq3}
	\tilde r_{1}(x\lambda_{1})\geq(\leq)\tilde r_{1}(x\lambda_{2}).
\end{equation}
Also, $\tilde r_{1}(x)\geq(\leq)\tilde r_{2}(x)$ gives
\begin{equation}\label{eq3.}
\tilde r_{1}(x\lambda_{2})\geq(\leq)\tilde r_{2}(x\lambda_{2}).
\end{equation}
Equations \eqref{eq3}, \eqref{eq3.} and $n^{*}_1\geq(\leq)n^{*}_2$, together imply
\begin{equation}\label{eq3..}
	n^{*}_1\tilde r_{1}(x\lambda_{1})\geq(\leq)n^{*}_2\tilde r_{2}(x\lambda_{2}).
\end{equation}
Finally, combining \eqref{eq2} and \eqref{eq3..}, we obtain \eqref{eq3.2}. This completes the proof of the theorem.
\end{proof}
In the previous result, we assume that the dependence structures of two sets of samples having multiple-outliers are different. Also, first $n^{*}_1$ observations of  $\{X_1,\cdots,X_{n^{*}_1},X_{{n^{*}_1}+1},\cdots, X_{n^{*}}\}$ have baseline distribution function $F_1$ and remaining observations have baseline distribution function $F_{2}$. Similarly, for the other set of observations $\{Y_1,\cdots,Y_{n^{*}_1},Y_{{n^{*}_1}+1},\cdots, Y_{n^{*}}\}$. The following corollary, which is a direct consequence of Theorem \ref{th1} presents some special cases.
\begin{corollary}\label{cor1}
	In addition to Assumption \ref{ass1}, let $\psi_1=\psi_2=\psi$, $n^{*}_1\geq(\leq)n^{*}_2$ and $\psi$ be log-convex. Further, let $\boldsymbol{ \lambda},~\boldsymbol{\mu}\in\mathcal{E}_+~(\mathcal{D}_+)$. Then,
	\begin{itemize}
			\item [(i)]	
		 $(\underbrace{\lambda_{1},\cdots,\lambda_{1}}_{n^{*}_{1}},\underbrace{\lambda_{2},\cdots,\lambda_{2}}_{n^{*}_{2}})\succeq^{w}
		 (\underbrace{\mu_{1},\cdots,\mu_{1}}_{n^{*}_{1}},\underbrace{\mu_{2},\cdots,\mu_{2}}_{n^{*}_{2}})\Rightarrow Y_{n^{*}:n^{*}}(n^{*}_{1},n^{*}_{2})\leq_{st}X_{n^{*}:n^{*}}(n^{*}_{1},n^{*}_{2}),$ provided  $\tilde{r}_1(x)\text{ or }\tilde{r}_2(x)$ is decreasing and $\tilde{r}_1(x)\geq(\leq)\tilde{r}_2(x);$
		\item [(ii)] for $\tilde{r}_1=\tilde{r}_2=\tilde{r},$ we have
		 $$(\underbrace{\lambda_{1},\cdots,\lambda_{1}}_{n^{*}_{1}},\underbrace{\lambda_{2},\cdots,\lambda_{2}}_{n^{*}_{2}})\succeq^{w}
		 (\underbrace{\mu_{1},\cdots,\mu_{1}}_{n^{*}_{1}},\underbrace{\mu_{2},\cdots,\mu_{2}}_{n^{*}_{2}})\Rightarrow Y_{n^{*}:n^{*}}(n^{*}_{1},n^{*}_{2})\leq_{st}X_{n^{*}:n^{*}}(n^{*}_{1},n^{*}_{2}),$$ provided  $\tilde{r}(x)$ is decreasing.
	\end{itemize}

\end{corollary}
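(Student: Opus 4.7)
The plan is to deduce both parts of the corollary as direct specializations of Theorem \ref{th1}, so essentially no new analytic work is needed beyond verifying that the hypotheses of that theorem are met in the restricted setting.

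For part (i), I would take $\psi_1=\psi_2=\psi$ inside Theorem \ref{th1}. The only hypothesis of Theorem \ref{th1} that is not already stated verbatim in the corollary is the super-additivity of $\phi_2\circ\psi_1$; but under $\psi_1=\psi_2=\psi$ this composition is $\phi\circ\psi$, which is the identity map on $[0,1]$, and the identity is trivially super-additive since $x+y=x+y$. The remaining hypotheses match: log-convexity of $\psi$ plays the role of ``$\psi_1$ or $\psi_2$ is log-convex''; the monotonicity assumption on $\tilde r_1$ or $\tilde r_2$, the comparison $\tilde r_1(x)\geq(\leq)\tilde r_2(x)$, the ordering $n^*_1\geq(\leq)n^*_2$, and the domain restriction $\boldsymbol{\lambda},\boldsymbol{\mu}\in\mathcal{E}_+~(\mathcal{D}_+)$ are all inherited without change. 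Theorem \ref{th1} then yields the stated $\leq_{st}$ implication.

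For part (ii), I would specialize part (i) further by setting $\tilde r_1=\tilde r_2=\tilde r$. In that case the comparison $\tilde r_1(x)\geq(\leq)\tilde r_2(x)$ holds as equality for every $x$, so both of the bracketed alternatives are automatically satisfied; the hypothesis ``$\tilde r_1$ or $\tilde r_2$ is decreasing'' collapses to ``$\tilde r$ is decreasing''; and nothing else in the hypothesis list depends on $\tilde r_1$ versus $\tilde r_2$ separately. Part (i) then applies and gives the desired conclusion.

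There is no real obstacle here: the whole content of the corollary is that when the two copulas coincide, the super-additivity condition of Theorem \ref{th1} becomes vacuous (the identity is super-additive), and when the two baseline reversed hazard rates coincide, the comparison between them is vacuous as well. I would therefore write the proof as two short paragraphs, each invoking Theorem \ref{th1} after remarking on the trivialization of the relevant hypothesis.
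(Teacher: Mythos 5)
Your proposal is correct and follows exactly the route the paper intends: the paper states this corollary as a direct consequence of Theorem \ref{th1} without further argument, and your two observations (that $\phi\circ\psi$ is the identity, hence trivially super-additive, and that $\tilde r_1=\tilde r_2$ makes the comparison hypothesis vacuous) are precisely the details needed to justify that specialization.
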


The next theorem states that the ordering result holds between the largest order statistics $X_{n:n}(n_{1},n_{2})$ and $X_{n^{*}:n^{*}}(n^{*}_{1},n^{*}_{2})$ according to the usual stochastic ordering. Here, the samples are  collected from multiple-outlier dependent scale models. Also, it is assumed that the samples are sharing Archimedean copula with a common generator.
\begin{assumption}\label{ass3}
	Let $X_{1},\cdots,X_{n^{*}}$ be $n^*$ dependent nonnegative random variables sharing Archimedean copula with generator $\psi_1,$ such that  $X_{i}\sim F_1(x\lambda_1)$,  for $i=1,\cdots,n^{*}_1$ and $X_{j}\sim F_{2}(x\lambda_{2}),$ for $j={n^{*}_{1}+1},\cdots,n^*.$ We assume that there exist two natural numbers $n_{1}$ and $n_{2}$ such that $1\leq n_{1}\leq n^{*}_{1}\leq n^{*}_{2}\leq n_{2}.$ Also,  $n=n_1+n_2,~n^*=n^*_1+n^*_2$ and $\psi_{1}=\phi^{-1}_{1}$.
\end{assumption}

\begin{theorem}\label{th3}
Let Assumption \ref{ass3} hold with $F_{1}\geq F_{2}$. Then, for  $\boldsymbol{\lambda}\in\mathcal{D_+}$,  we have
$$(n_{1},n_{2})\succeq_{w}(n^{*}_{1},n^{*}_{2})\Rightarrow X_{n^{*}:n^{*}}(n^{*}_{1},n^{*}_{2})\leq_{st}X_{n:n}(n_{1},n_{2}).$$
\end{theorem}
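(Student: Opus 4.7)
Since both samples share the Archimedean copula with generator $\psi_1$ and use the same baselines $F_1, F_2$ and the same scales $\lambda_1, \lambda_2$, the distribution functions of the two maxima are
$$F_{X_{n:n}}(x) = \psi_1\bigl(n_1\phi_1(F_1(x\lambda_1)) + n_2\phi_1(F_2(x\lambda_2))\bigr),$$
$$F_{X_{n^*:n^*}}(x) = \psi_1\bigl(n^*_1\phi_1(F_1(x\lambda_1)) + n^*_2\phi_1(F_2(x\lambda_2))\bigr).$$
Because $\psi_1$ is nonincreasing, establishing $X_{n^*:n^*}(n^*_1,n^*_2)\leq_{st}X_{n:n}(n_1,n_2)$ reduces to showing
$$G(x) := (n_1-n^*_1)\phi_1(F_1(x\lambda_1)) + (n_2-n^*_2)\phi_1(F_2(x\lambda_2)) \geq 0, \quad x\geq 0.$$
So the whole proof boils down to a sign analysis of $G(x)$.

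The plan is to combine two ingredients. First, the hypothesis $1\leq n_1\leq n^*_1\leq n^*_2\leq n_2$ makes the two coefficients have opposite signs: setting $\alpha = n^*_1-n_1\geq 0$ and $\beta = n_2-n^*_2\geq 0$, we write $G(x) = \beta\, \phi_1(F_2(x\lambda_2)) - \alpha\, \phi_1(F_1(x\lambda_1))$. Unpacking the weak submajorization $(n_1,n_2)\succeq_w(n^*_1,n^*_2)$ on these already increasing 2-vectors gives $n^*_2\leq n_2$ (automatic from the ordering hypothesis) and, crucially, $n^*_1+n^*_2\leq n_1+n_2$, i.e. $\beta\geq\alpha$.

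Second, I would use the scale structure and $F_1\geq F_2$ to order the two values of $\phi_1$. From $\boldsymbol{\lambda}\in\mathcal{D}_+$ we have $\lambda_1\geq\lambda_2$, and since $F_1$ is nondecreasing while $F_1\geq F_2$ pointwise,
$$F_1(x\lambda_1) \geq F_1(x\lambda_2) \geq F_2(x\lambda_2).$$
The generator $\psi_1$ is nonincreasing with $\psi_1(0)=1,\psi_1(\infty)=0$, so its right-continuous inverse $\phi_1$ maps $[0,1]$ into $[0,\infty]$ and is itself nonincreasing. Hence
$$0 \leq \phi_1(F_1(x\lambda_1)) \leq \phi_1(F_2(x\lambda_2)).$$

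Putting $a=\phi_1(F_1(x\lambda_1))$ and $b=\phi_1(F_2(x\lambda_2))$ so that $0\leq a\leq b$, the step $\beta\geq\alpha\geq 0$ yields
$$G(x) = \beta b - \alpha a \geq \alpha b - \alpha a = \alpha(b-a)\geq 0,$$
which is precisely what is needed. There is really no hard step: the only thing to be mindful of is correctly translating the weak submajorization condition on $(n^*_1,n^*_2)$ and $(n_1,n_2)$ into the inequality $\beta\geq\alpha$, after which monotonicity of $\phi_1$ and $F_1$ does the rest, with no need to invoke the super-additivity of $\phi_2\circ\psi_1$ or any Schur-convexity machinery (both copulas are the same here).
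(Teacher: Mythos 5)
Your proposal is correct and follows essentially the same route as the paper: reduce the stochastic ordering to the pointwise inequality $(n^*_1-n_1)\phi_1(F_1(x\lambda_1))\leq(n_2-n^*_2)\phi_1(F_2(x\lambda_2))$, extract $n_2-n^*_2\geq n^*_1-n_1\geq 0$ from the weak submajorization together with $n_1\leq n^*_1$, and use $\lambda_1\geq\lambda_2$, $F_1\geq F_2$, and the monotonicity of $\phi_1$ to order the two $\phi_1$-values. Your explicit remark that neither super-additivity nor Schur-convexity machinery is needed here matches the paper's proof, which likewise uses only these elementary inequalities.
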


\begin{proof}
	The distribution functions of $X_{n:n}(n_1,n_2)$ and $X_{n^*:n^*}(n^*_1,n^*_2)$ can be written respectively as
\begin{equation*}
F_{X_{n:n}}(n_1,n_2)(x)=\psi_{1}[n_{1}\phi_{1}\left(F_{1}\left(x\lambda_1\right)\right)+n_{2}\phi_{1}\left(F_{2}\left(x\lambda_2\right)\right)]
\end{equation*}
and \begin{equation*}
F_{X_{n^{*}:n^{*}}}(n^*_1,n^*_2)(x)=\psi_{1}[n^{*}_{1}\phi_{1}\left(F_{1}\left(x\lambda_1\right)\right)+n^{*}_{2}\phi_{1}\left(F_{2}\left(x\lambda_2\right)\right)].
\end{equation*}
To obtain the desired result, one needs to show $F_{X_{n:n}}(n_1,n_2)(x)\leq F_{X_{n^*:n^*}}(n^*_1,n^*_2)(x)$. Equivalently, we have to establish that  $(n^{*}_{1}-n_{1})\phi_{1}\left(F_{1}\left(x\lambda_1\right)\right)\leq(n_{2}-n^{*}_{2})\phi_{1}\left(F_{2}\left(x\lambda_2\right)\right)$.
Now, $(n_{1},n_{2})\succeq_{w}(n^{*}_{1},n^{*}_{2})\Rightarrow(n_{1}+n_{2})\geq(n^{*}_{1}+n^{*}_{2})\Rightarrow (n_{2}-n^{*}_{2})\geq(n^{*}_{1}-n_{1})\geq 0.$ Also, $\lambda_{1}\geq \lambda_2\Rightarrow \phi_{1}\left(F_{2}\left(x\lambda_2\right)\right)\geq\phi_{1}\left(F_1\left(x\lambda_1\right)\right)\geq 0.$
Using these arguments, we get the required inequality. Hence, the proof is completed.
\end{proof}
In Theorem \ref{th3}, if we take the same baseline distribution, then the following corollary is immediate.
\begin{corollary}\label{cor3}
	Let  Assumption \ref{ass3} hold with $F_1=F_2$. Then, for  $\boldsymbol{\lambda}\in\mathcal{D_+}$,  we have
	$$(n_{1},n_{2})\succeq_{w}(n^{*}_{1},n^{*}_{2})\Rightarrow X_{n^{*}:n^{*}}(n^{*}_{1},n^{*}_{2})\leq_{st}X_{n:n}(n_{1},n_{2}).$$
\end{corollary}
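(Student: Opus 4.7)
The plan is to derive Corollary 3.1 as an immediate specialization of Theorem 3.3. The hypothesis $F_1 = F_2$ trivially gives the pointwise inequality $F_1 \geq F_2$ required by Theorem 3.3, and every other ingredient, namely Assumption 3.2, the majorization condition $(n_1, n_2) \succeq_w (n_1^*, n_2^*)$, and $\boldsymbol{\lambda} \in \mathcal{D}_+$, is reproduced verbatim. So in principle one sentence suffices: the result follows from Theorem 3.3 upon noting $F_1 \geq F_2$ holds (with equality).

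For transparency I would also sketch the direct argument, which is essentially a trimmed-down version of the proof of Theorem 3.3. Writing $F \equiv F_1 = F_2$, the distribution functions of the two largest order statistics become
\begin{equation*}
F_{X_{n:n}}(n_1,n_2)(x) = \psi_1\!\left[n_1 \phi_1(F(x\lambda_1)) + n_2 \phi_1(F(x\lambda_2))\right],
\end{equation*}
\begin{equation*}
F_{X_{n^{*}:n^{*}}}(n_1^{*},n_2^{*})(x) = \psi_1\!\left[n_1^{*} \phi_1(F(x\lambda_1)) + n_2^{*} \phi_1(F(x\lambda_2))\right].
\end{equation*}
Since $\psi_1$ is nonincreasing, the desired stochastic ordering $F_{X_{n:n}}(n_1,n_2)(x) \leq F_{X_{n^{*}:n^{*}}}(n_1^{*},n_2^{*})(x)$ is equivalent to
\begin{equation*}
(n_2 - n_2^{*})\,\phi_1(F(x\lambda_2)) \;\geq\; (n_1^{*} - n_1)\,\phi_1(F(x\lambda_1)).
\end{equation*}

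Two elementary observations close the proof. First, the weak submajorization $(n_1,n_2) \succeq_w (n_1^{*}, n_2^{*})$ gives $n_1 + n_2 \geq n_1^{*} + n_2^{*}$, which combined with Assumption 3.2 ($n_1 \leq n_1^{*} \leq n_2^{*} \leq n_2$) produces the chain $n_2 - n_2^{*} \geq n_1^{*} - n_1 \geq 0$. Second, $\boldsymbol{\lambda} \in \mathcal{D}_+$ forces $\lambda_1 \geq \lambda_2$, so monotonicity of $F$ and of $\phi_1 = \psi_1^{-1}$ (which is nonnegative and nonincreasing) yield $\phi_1(F(x\lambda_2)) \geq \phi_1(F(x\lambda_1)) \geq 0$. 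Multiplying these two componentwise inequalities gives the required bound.

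There is no real obstacle here. The only point that requires mild care is to make sure the direction of the inequalities under $\phi_1$ is tracked correctly, since $\phi_1$ reverses order, but this is the same bookkeeping already exploited in the proof of Theorem 3.3.
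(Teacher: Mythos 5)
Your proposal is correct and matches the paper exactly: the paper states the corollary is immediate from Theorem \ref{th3} once $F_1=F_2$ is read as a special case of $F_1\geq F_2$, and your supplementary direct argument is just the proof of Theorem \ref{th3} rewritten with $F_1=F_2=F$, with all inequalities tracked correctly.
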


Next, we observe that two largest order statistics $X_{n:n}(n_{1},n_{2})$ and $Y_{n^{*}:n^{*}}(n^{*}_{1},n^{*}_{2})$ are comparable with respect to the usual stochastic order. It is worth mentioning that the order statistics are constructed from two multiple-outlier dependent samples having sample sizes $n$ and $ n^{*}.$  The pairs of the sizes of both the outliers $(n_1,n_2)$ and $(n^{*}_1,n^{*}_2)$ are assumed to be connected according to the weakly submajorization order. The following assumption is useful for the next theorem.
\begin{assumption}\label{ass4}
	Let $X_{1},\cdots,X_{n}$ be  $n$ nonnegative dependent random variables sharing Archimedean copula with generator $\psi_1,$ such that  $X_{i}\sim F_1(x\lambda_{1}),$ for $i=1,\cdots,n_1$ and $X_{j}\sim F_2(x\lambda_{2}),$ for $j={n_{1}+1},\cdots,n.$ Also, let $Y_{1},\cdots,Y_{n^*}$ be  $n^*$ dependent nonnegative random variables sharing Archimedean copula with generator $\psi_2,$ such that  $Y_{i}\sim F_1(x\mu_{1}),$ for $i=1,\cdots,n^*_1$ and $Y_{j}\sim F_2(x\mu_{2}),$ for $j={n^*_{1}+1},\cdots,n^*.$  Here, $1\leq n_{1}\leq n^{*}_{1}\leq n^{*}_{2}\leq n_{2}$, $n=n_1+n_2$ and $n^*=n^*_1+n^*_2.$ 
\end{assumption}

\begin{theorem}\label{th4}
	Assume that Assumption \ref{ass4} hold with $\tilde{r}_1(x)\leq\tilde{r}_2(x)$.
	 Let $(n_{1},n_{2})\succeq_{w}(n^{*}_{1},n^{*}_{2})$. Then,
	 $$(\underbrace{\lambda_{1},\cdots,\lambda_{1}}_{n^*_{1}},\underbrace{\lambda_{2},\cdots,\lambda_{2}}_{n^*_{2}})\succeq^{w}(\underbrace{\mu_{1},\cdots,\mu_{1}}_{n^{*}_{1}},\underbrace{\mu_{2},\cdots,\mu_{2}}_{n^{*}_{2}})\Rightarrow Y_{n^{*}:n^{*}}(n^{*}_{1},n^{*}_{2})\leq_{st}X_{n:n}(n_{1},n_{2}),$$ provided $\boldsymbol{ \lambda},~\boldsymbol{\mu}\in\mathcal{D}_+,$
	$\phi_2\circ\psi_1$ is super-additive,  $\psi_1\text{ or }\psi_2$ is log-convex and $\tilde{r}_1(x)\text{ or }\tilde{r}_2(x)$ is decreasing.
	
\end{theorem}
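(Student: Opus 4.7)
The plan is to chain Theorems \ref{th1} and \ref{th3} through an intermediate parallel system, so that neither the Schur-concavity calculation in $\boldsymbol\lambda$ nor the outlier-size manipulation needs to be redone. The one preliminary fact required is that $\tilde r_1\leq\tilde r_2$ implies $F_1\geq F_2$ pointwise: since $\tilde r_i=(\log F_i)'$, the condition forces $(\log(F_1/F_2))'\leq 0$, so $F_1/F_2$ is nonincreasing, and because both $F_i$ tend to $1$ at infinity (the random variables are nonnegative), the ratio stays at least $1$ throughout. This supplies the baseline ordering that Theorem \ref{th3} demands.

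Introduce the auxiliary largest order statistic $X_{n^{*}:n^{*}}(n^{*}_{1},n^{*}_{2})$ --- the maximum of a hypothetical $X$-type sample of size $n^{*}$ sharing the Archimedean generator $\psi_1$ and the scales $\lambda_1,\lambda_2$ with the actual $X$-sample, but assembled with outlier split $(n^{*}_1,n^{*}_2)$; its CDF is $\psi_1[n^{*}_{1}\phi_1(F_1(x\lambda_1))+n^{*}_{2}\phi_1(F_2(x\lambda_2))]$. The chain $1\leq n_1\leq n^{*}_1\leq n^{*}_2\leq n_2$ in particular gives $n^{*}_1\leq n^{*}_2$, which together with $\boldsymbol{\lambda},\boldsymbol{\mu}\in\mathcal{D}_+$, $\tilde r_1\leq\tilde r_2$, super-additivity of $\phi_2\circ\psi_1$, log-convexity of $\psi_1$ or $\psi_2$, monotonicity of $\tilde r_1$ or $\tilde r_2$, and the assumed weak supermajorization on $(\boldsymbol\lambda,\boldsymbol\mu)$, puts us exactly in the $\mathcal{D}_+$ branch of Theorem \ref{th1}. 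Therefore $Y_{n^{*}:n^{*}}(n^{*}_1,n^{*}_2)\leq_{st} X_{n^{*}:n^{*}}(n^{*}_1,n^{*}_2)$. Then, with $F_1\geq F_2$ in hand, $\boldsymbol\lambda\in\mathcal{D}_+$, and $(n_1,n_2)\succeq_w(n^{*}_1,n^{*}_2)$, Theorem \ref{th3} applies directly to yield $X_{n^{*}:n^{*}}(n^{*}_1,n^{*}_2)\leq_{st} X_{n:n}(n_1,n_2)$, and transitivity of $\leq_{st}$ closes the argument.

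The only genuinely nontrivial verification is the hinge implication $\tilde r_1\leq\tilde r_2\Rightarrow F_1\geq F_2$, which bridges the reversed-hazard-rate hypothesis assumed here with the distributional hypothesis that Theorem \ref{th3} is phrased under. Every other substantive ingredient --- the Schur-concavity in $\boldsymbol\lambda$, the exploitation of super-additivity of $\phi_2\circ\psi_1$, and the outlier-size inequality $(n_2-n^{*}_2)\phi_1(F_2(x\lambda_2))\geq(n^{*}_1-n_1)\phi_1(F_1(x\lambda_1))$ --- has already been carried out in the proofs of the two cited theorems, so the present proof reduces to a clean citation chain plus that one reversed-hazard-rate consistency check.
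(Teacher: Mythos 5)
Your proposal is correct and follows essentially the same route as the paper: the paper's proof of this theorem is exactly the two-step chain $Y_{n^{*}:n^{*}}(n^{*}_{1},n^{*}_{2})\leq_{st}X_{n^{*}:n^{*}}(n^{*}_{1},n^{*}_{2})$ via Theorem \ref{th1} and $X_{n^{*}:n^{*}}(n^{*}_{1},n^{*}_{2})\leq_{st}X_{n:n}(n_{1},n_{2})$ via Theorem \ref{th3}, combined by transitivity. Your explicit verification that $\tilde r_1\leq\tilde r_2$ forces $F_1\geq F_2$ (so that Theorem \ref{th3}'s hypothesis is actually met) is a detail the paper leaves implicit, and it is a worthwhile addition rather than a deviation.
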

\begin{proof}
	By Theorem \ref{th1}, we have
	\begin{equation}\label{eq11}
		 (\underbrace{\lambda_{1},\cdots,\lambda_{1}}_{n^*_{1}},\underbrace{\lambda_{2},\cdots,\lambda_{2}}_{n^*_{2}})\succeq^{w}(\underbrace{\mu_{1},\cdots,\mu_{1}}_{n^{*}_{1}},\underbrace{\mu_{2},\cdots,\mu_{2}}_{n^{*}_{2}})\Rightarrow Y_{n^{*}:n^{*}}(n^{*}_{1},n^{*}_{2})\leq_{st}X_{n^{*}:n^{*}}(n^{*}_{1},n^{*}_{2}).
	\end{equation}
	  Further, from Theorem \ref{th3}, we have
	  \begin{equation}\label{eq12}
	  (n_{1},n_{2})\succeq_{w}(n^{*}_{1},n^{*}_{2})\Rightarrow X_{n^{*}:n^{*}}(n^{*}_{1},n^{*}_{2})\leq_{st}X_{n:n}(n_{1},n_{2}).
	  \end{equation}
	Upon combining inequalities given by \eqref{eq11} and \eqref{eq12}, the required result readily follows.
\end{proof}

The result stated in Theorem \ref{th4} is general. However, if we take some restrictions on the generators of the Archimedean copula and on the cumulative distribution functions, then we get some particular results. These are presented in the following corollary, which follows from Theorem \ref{th4}.
\begin{corollary}\label{cor4}
	Let the set-up in Assumption \ref{ass4} hold with $(n_{1},n_{2})\succeq_{w}(n^{*}_{1},n^{*}_{2}).$ Also, let $\boldsymbol{ \lambda},~\boldsymbol{\mu}\in\mathcal{D}_+,$ $\psi_1=\psi_2=\psi$ and $\psi$ be log-convex. Then,
	\begin{itemize}
		\item[(i)]
		$
		 (\underbrace{\lambda_{1},\cdots,\lambda_{1}}_{n^*_{1}},\underbrace{\lambda_{2},\cdots,\lambda_{2}}_{n^*_{2}})\succeq^{w}(\underbrace{\mu_{1},\cdots,\mu_{1}}_{n^{*}_{1}},\underbrace{\mu_{2},\cdots,\mu_{2}}_{n^{*}_{2}})\Rightarrow Y_{n^{*}:n^{*}}(n^{*}_{1},n^{*}_{2})\leq_{st}X_{n:n}(n_{1},n_{2}),$ provided
		$\tilde{r}_1(x)\text{ or }\tilde{r}_2(x)$ is decreasing and $\tilde{r}_1(x)\leq\tilde{r}_2(x)$.
		\item[(ii)]
		 $(\underbrace{\lambda_{1},\cdots,\lambda_{1}}_{n^*_{1}},\underbrace{\lambda_{2},\cdots,\lambda_{2}}_{n^*_{2}})\succeq^{w}(\underbrace{\mu_{1},\cdots,\mu_{1}}_{n^{*}_{1}},\underbrace{\mu_{2},\cdots,\mu_{2}}_{n^{*}_{2}})\Rightarrow Y_{n^{*}:n^{*}}(n^{*}_{1},n^{*}_{2})\leq_{st}X_{n:n}(n_{1},n_{2}),$ provided $\tilde{r}_1(x)=\tilde{r}_2(x)=\tilde{r}(x)$ is decreasing.
	\end{itemize}

\end{corollary}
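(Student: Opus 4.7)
The plan is to derive both parts of the corollary as direct specializations of Theorem \ref{th4}, so the main task is verifying that every hypothesis of that theorem is in force under the simpler assumptions of the corollary.

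For part (i), I would start by setting $\psi_1 = \psi_2 = \psi$ in Theorem \ref{th4}. With this identification, the composition $\phi_2 \circ \psi_1 = \phi \circ \psi$ reduces to the identity map on $[0,\infty)$, and the identity trivially satisfies the super-additivity condition $f(x)+f(y)\leq f(x+y)$ (with equality). The log-convexity hypothesis on $\psi$ immediately delivers log-convexity of both $\psi_1$ and $\psi_2$, so the ``$\psi_1$ or $\psi_2$ log-convex'' clause of Theorem \ref{th4} is met. The remaining assumptions — namely $\tilde{r}_1(x)\leq \tilde{r}_2(x)$, the decreasing monotonicity of one of $\tilde{r}_1, \tilde{r}_2$, the weak submajorization $(n_1,n_2)\succeq_w(n^{*}_1,n^{*}_2)$, the domain restriction $\boldsymbol{\lambda},\boldsymbol{\mu}\in\mathcal{D}_+$, and the weak supermajorization of the scale vectors — are all carried over verbatim from the hypotheses of the corollary. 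Applying Theorem \ref{th4} then yields the desired $\leq_{st}$ conclusion.

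For part (ii), I would observe that taking $\tilde{r}_1 = \tilde{r}_2 = \tilde{r}$ automatically furnishes the inequality $\tilde{r}_1(x)\leq\tilde{r}_2(x)$ (with equality), and the assumed decreasing property of $\tilde{r}$ fulfills the monotonicity condition on one of $\tilde{r}_1, \tilde{r}_2$. Part (ii) therefore specializes part (i), and no further argument is needed.

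The only point worth double-checking — and the one I would treat as a potential obstacle — is the reduction of the super-additivity condition when $\psi_1 = \psi_2$. Since super-additivity is defined by the inequality $f(x)+f(y)\leq f(x+y)$, the boundary case of equality (attained by the identity) does satisfy the requirement, so Lemma \ref{lem3.1} and its use inside the proof of Theorem \ref{th4} remain valid. With this verified, no substantive obstacle remains, and the corollary is obtained purely by specializing the hypotheses of Theorem \ref{th4}.
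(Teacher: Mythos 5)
Your proposal is correct and matches the paper's intent exactly: the paper presents Corollary \ref{cor4} as a direct consequence of Theorem \ref{th4}, and your verification that $\phi_2\circ\psi_1$ becomes the identity (hence trivially super-additive) when $\psi_1=\psi_2=\psi$, with all other hypotheses carried over or implied, is precisely the specialization needed. Part (ii) as a further specialization of part (i) via $\tilde{r}_1=\tilde{r}_2$ is also right.
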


We now present a numerical example, which provides an illustration of Theorem \ref{th4}.
\begin{example}\label{ex1}
Set $\boldsymbol{\lambda}=(\lambda_1,\lambda_2)=(5,2),~\boldsymbol{\mu}=(\mu_1,\mu_2)=(6,3),~(n_1,n_2)=(1,11),~(n^*_1,n^*_2)=(5,6),~\psi_1(x)=e^{-x^{\frac{1}{9}}},~\psi_2(x)=e^{-x^{\frac{1}{10}}},~x>0$. Consider the baseline distribution functions as $F_2(x)=1-e^{1-(1+x^{2})^{\frac{1}{5}}}$ and $F_1(x)=1-e^{-x},~x>0.$ Here, both the reversed hazard rate functions $\tilde{r}_1$ and $\tilde{r}_2$ are decreasing and satisfy $\tilde{r}_1(x)\leq\tilde{r}_2(x),$ for $x>0.$ Further, $\psi_1$ and $\psi_2$ are log-convex, $\phi_2\circ\psi_1$ is super-additive.
Thus, all the conditions of Theorem \ref{th4} are satisfied. Now,
we plot the graphs of ${F}_{X_{12:12}}(1,11)(x)$ and ${F}_{Y_{11:11}}(5,6)(x)$ in Figure $1a$, which shows that $Y_{11:11}(5,6)\leq_{st}X_{12:12}(1,11)$ holds.

\begin{figure}[h]
	\begin{center}
		\subfigure[]{\label{c1}\includegraphics[height=2.46in]{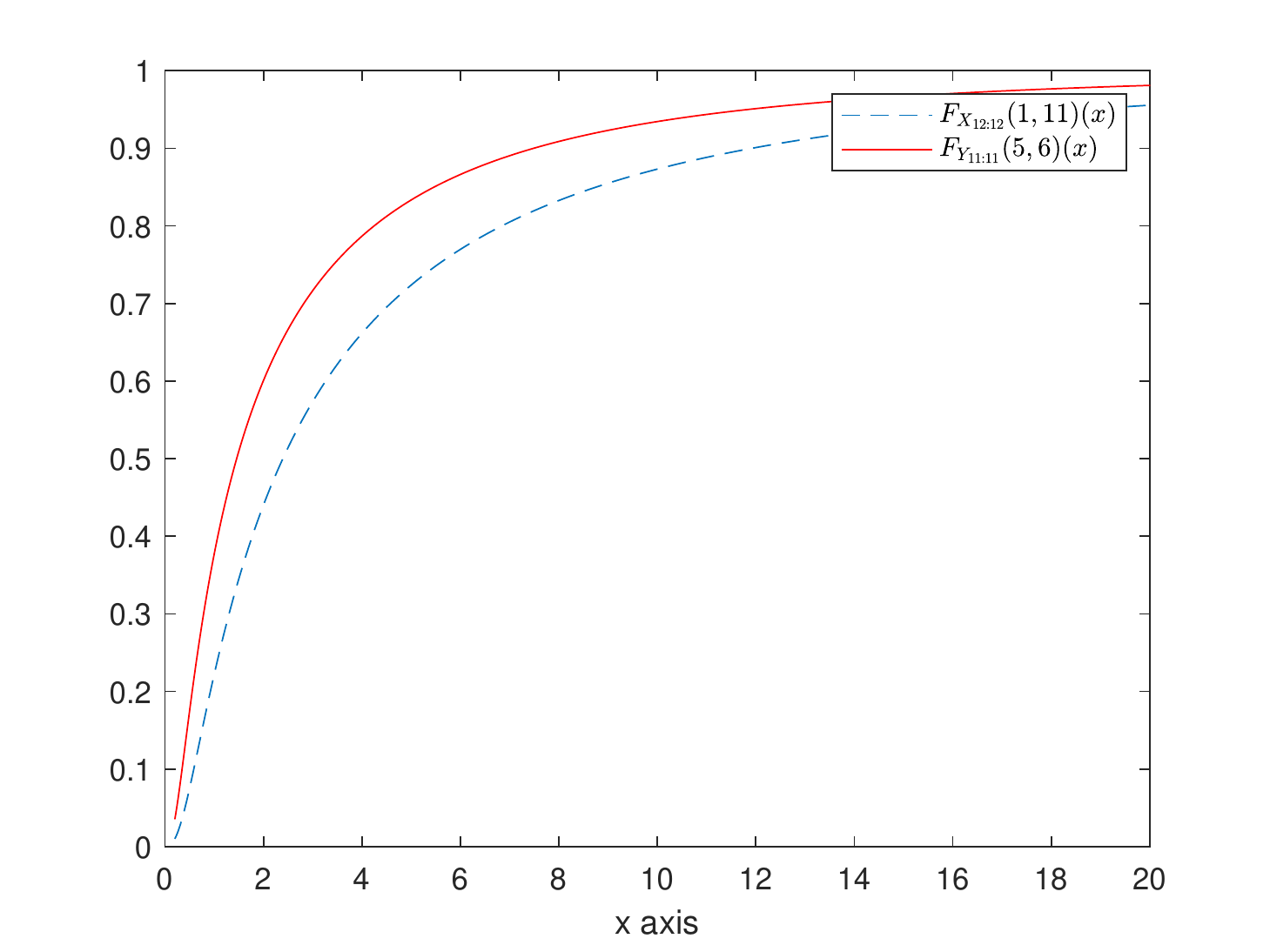}}
		\subfigure[]{\label{c2}\includegraphics[height=2.46in]{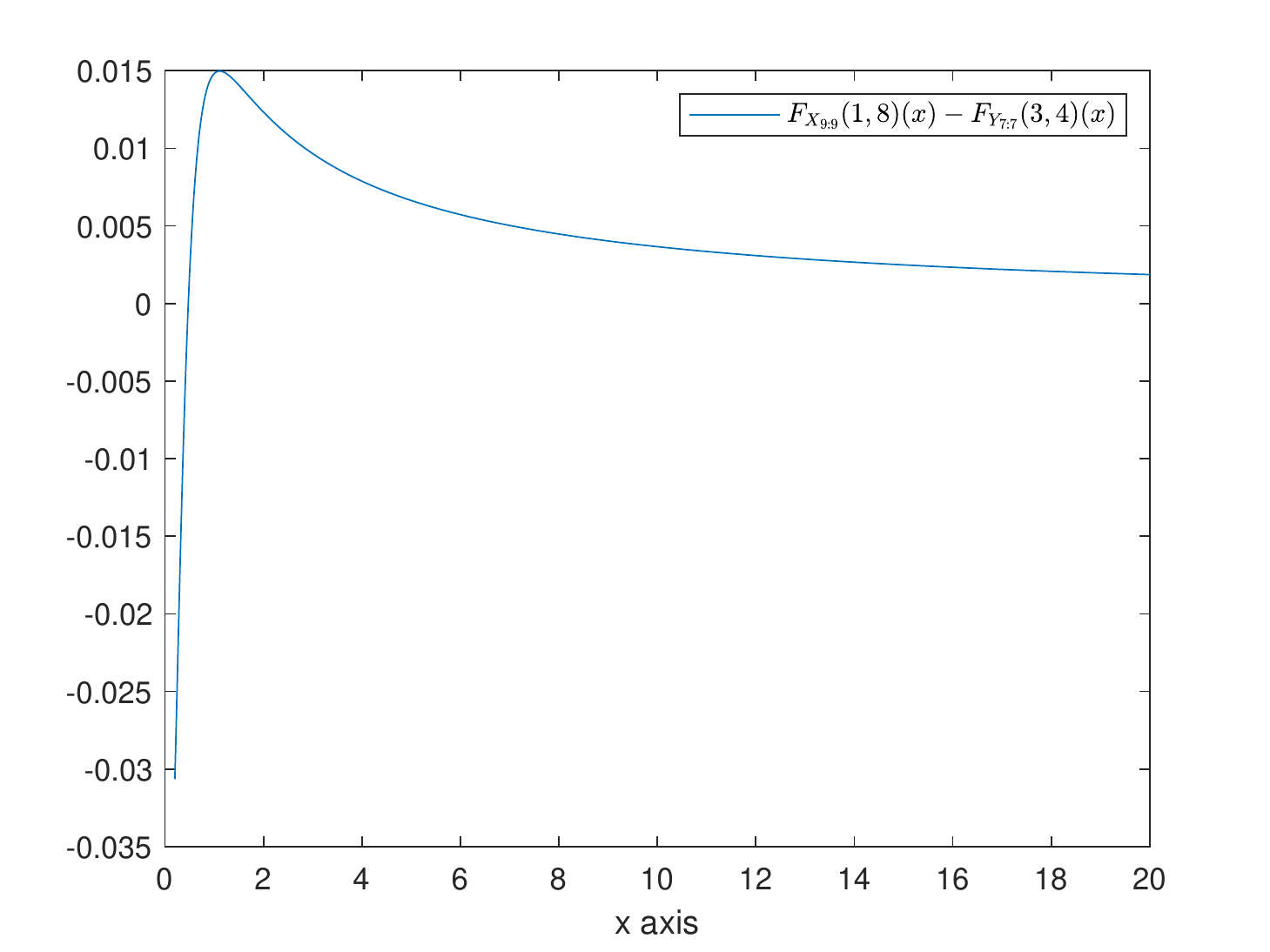}}
		\caption{
			(a) Plots of the distribution functions ${F}_{X_{12:12}}(1,11)(x)$ and ${F}_{Y_{11:11}}(5,6)(x)$ as in Example \ref{ex1}.
			 (b) Plot of $F_{X_{9:9}}(1,8)(x)-F_{Y_{7:7}}(3,4)(x)$ as in Counterexample \ref{ce1}.
			}
	\end{center}
\end{figure}
\end{example}
Next, we present a counterexample to illustrate that the result does not hold if  $\tilde{r}_1(x)\geq\tilde{r}_2(x)$ and $\boldsymbol{ \lambda}\in\mathcal{E_+}$ in Theorem \ref{th4}.
\begin{counterexample}\label{ce1}
	 Consider $\boldsymbol{\lambda}=(\lambda_1,\lambda_2)=(2,6),~\boldsymbol{\mu}=(\mu_1,\mu_2)=(8,2),~(n_1,n_2)=(1,8),~(n^*_1,n^*_2)=(3,4),~\psi_1(x)=e^{-x^{\frac{1}{3}}},~\psi_2(x)=e^{-x^{\frac{1}{10}}},~x>0.$ Baseline distribution functions are taken as $F_1(x)=1-e^{-x}$ and $F_2(x)=1-(1+2x)^{-0.5},~x>0.$
	It can be seen that all the conditions of Theorem \ref{th4} are satisfied except $\boldsymbol{ \lambda}\in\mathcal{D_+}$ and $\tilde{r}_1(x)\leq\tilde{r}_2(x)$. Now,
	we plot the graph of $F_{X_{9:9}}(1,8)(x)-F_{Y_{7:7}}(3,4)(x)$ in Figure $1b$, which reveals that $Y_{7:7}(3,4)\nleq_{st}X_{9:9}(1,8)$.
\end{counterexample}

%

In the preceding theorems, we have derived sufficient conditions, under which the largest order statistics from multiple-outlier dependent scale models obey the usual stochastic order. However, naturally, it is of interest to extend the ordering results to some other stronger concepts of the stochastic orders.
In this part of the subsection, we  establish sufficient conditions, under which the reversed hazard rate order holds between the largest order statistics. The following theorem shows that the largest order statistics $X_{n^{*}:n^{*}}(n^{*}_{1},n^{*}_{2})$ and $Y_{n^{*}:n^{*}}(n^{*}_{1},n^{*}_{2})$ have the reversed hazard rate ordering when the scale parameters are associated with the weakly supermajorization order. The samples are heterogeneous and follow multiple-outlier dependent scale models.
\begin{theorem}\label{th7}
	Let Assumption \ref{ass1} hold with $r_1=r_2=r$, $n^{*}_{1}\geq(\leq)~n^{*}_2$ and $\psi_1=\psi_2=\psi$. Also, suppose $\psi$ is log-concave, $\frac{1-\psi}{\psi'}$ is decreasing and $\frac{1-\psi}{\psi'}[\frac{1-\psi}{\psi'}]'$ is increasing. Then,
	 $$(\underbrace{\lambda_{1},\cdots,\lambda_{1}}_{n^{*}_{1}},\underbrace{\lambda_{2},\cdots,\lambda_{2}}_{n^{*}_{2}})\succeq^{w}
	(\underbrace{\mu_{1},\cdots,\mu_{1}}_{n^{*}_{1}},\underbrace{\mu_{2},\cdots,\mu_{2}}_{n^{*}_{2}})\Rightarrow Y_{n^{*}:n^{*}}(n^{*}_{1},n^{*}_{2})\leq_{rh}X_{n^{*}:n^{*}}(n^{*}_{1},n^{*}_{2}),$$ provided $\boldsymbol{ \lambda},~\boldsymbol{\mu}\in\mathcal{E_+}~(\mathcal{D_+}),$
	 $r(x)$ is decreasing and $xr(x)$ is decreasing and convex.
\end{theorem}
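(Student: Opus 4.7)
The desired conclusion $Y_{n^{*}:n^{*}}\leq_{rh}X_{n^{*}:n^{*}}$ is equivalent to the pointwise inequality $\tilde r_{X_{n^{*}:n^{*}}}(x;\boldsymbol{\lambda})\geq \tilde r_{X_{n^{*}:n^{*}}}(x;\boldsymbol{\mu})$ for every $x>0$. Since the hypothesis $r_{1}=r_{2}=r$ forces a single baseline $F_{1}=F_{2}=F$ (the hazard rate of a nonnegative random variable determines its CDF) and $\psi_{1}=\psi_{2}=\psi$, both $X_{n^{*}:n^{*}}$ and $Y_{n^{*}:n^{*}}$ share one functional form and differ only in the scale vector. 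Fixing $x>0$, I therefore set $\Psi(\boldsymbol{\nu}):=\tilde r_{X_{n^{*}:n^{*}}}(x;\boldsymbol{\nu})$, regarded as a function of the extended $n^{*}$-dimensional scale vector, and aim to establish: (a) $\Psi$ is decreasing in each coordinate; (b) $\Psi$ is Schur-convex on $\mathcal{E}_{+}$ (respectively on $\mathcal{D}_{+}$). Combining (a) and (b) with $\boldsymbol{\lambda}\succeq^{w}\boldsymbol{\mu}$ via Theorem~A.8 of \cite{Marshall2011} (the same route used in the proof of Theorem~\ref{th1}) then delivers $\Psi(\boldsymbol{\lambda})\geq \Psi(\boldsymbol{\mu})$, as required.

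A direct computation, writing $u_{i}:=\phi(F(x\lambda_{i}))$, $w:=n^{*}_{1}u_{1}+n^{*}_{2}u_{2}$ and $H(u):=(1-\psi(u))/\psi'(u)$, yields
$$\Psi(\boldsymbol{\lambda})=\frac{\psi'(w)}{\psi(w)}\Bigl[n^{*}_{1}\lambda_{1}r(x\lambda_{1})H(u_{1})+n^{*}_{2}\lambda_{2}r(x\lambda_{2})H(u_{2})\Bigr].$$
For step (a), I differentiate $\Psi$ with respect to $\lambda_{1}$ (the $\lambda_{2}$-argument is symmetric). Two summands arise: one from $(\psi'/\psi)'(w)\,\partial w/\partial\lambda_{1}$ multiplying the bracket, and one from $(\psi'/\psi)(w)$ multiplying $\partial[\lambda_{1}r(x\lambda_{1})H(u_{1})]/\partial\lambda_{1}$. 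Both summands are shown to be nonpositive by combining the signs $\psi'/\psi<0$ with $(\psi'/\psi)'\leq 0$ (log-concavity of $\psi$), $H\leq 0$ and $H'\leq 0$ ($H$ decreasing), and $(yr(y))'\leq 0$ ($yr(y)$ decreasing), yielding $\partial\Psi/\partial\lambda_{1}\leq 0$.

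For step (b), Lemma~\ref{lem2.1b} on $\mathcal{E}_{+}$ (respectively Lemma~\ref{lem2.1a} on $\mathcal{D}_{+}$) reduces Schur-convexity to proving $\partial\Psi/\partial\nu_{i}\leq\partial\Psi/\partial\nu_{j}$ (respectively $\geq$) for $1\leq i\leq j\leq n^{*}$, where $\boldsymbol{\nu}$ denotes the extended vector. As in the three-case split used in the proof of Theorem~\ref{th1}, the within-block comparisons ($i,j$ both in $\{1,\dots,n^{*}_{1}\}$ or both in $\{n^{*}_{1}+1,\dots,n^{*}\}$) vanish identically, so only the mixed case $\nu_{i}=\lambda_{1}$, $\nu_{j}=\lambda_{2}$ carries content. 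There I would invoke the remaining hypotheses: the convexity of $yr(y)$ governs the comparison of $(yr)'(x\lambda_{1})$ with $(yr)'(x\lambda_{2})$; the condition that $HH'$ is increasing handles the cross term $H(u_{k})H'(u_{k})$ appearing in $\partial H(u_{k})/\partial\nu_{k}=xr(x\nu_{k})H(u_{k})H'(u_{k})$; and the coupling of $n^{*}_{1}\geq n^{*}_{2}$ with $\lambda_{1}\leq\lambda_{2}$ (both reversed on $\mathcal{D}_{+}$) fixes the sign of the size-weighted combination. This mixed-index computation is the principal obstacle: the $(\psi'/\psi)'(w)$ contributions to the two derivatives do not simplify because $u_{1}\neq u_{2}$, and orchestrating all the hypotheses on $\psi$, $r$, and the block sizes into a single signed inequality is the delicate core of the argument—essentially a second-order computation in disguise, which is why the assumptions here are stronger than those required in Theorem~\ref{th1}.
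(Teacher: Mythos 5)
Your proposal follows essentially the same route as the paper's proof: fix $x$, write the reversed hazard rate of the parallel system in exactly the form of the paper's display \eqref{eq3.13} (with $H=(1-\psi)/\psi'$), and establish that it is decreasing and Schur-convex in the scale vector via Theorem A.8 of \cite{Marshall2011} together with Lemmas \ref{lem2.1a} and \ref{lem2.1b} and the same three-case split. The mixed-index case you flag as the ``delicate core'' is resolved in the paper by precisely the term-by-term sign checks you describe --- $H$ decreasing and $r$ decreasing control the $(\psi'/\psi)'$ term, $HH'$ increasing together with $xr(x)$ decreasing controls the cross term, and convexity of $xr(x)$ controls the remaining term, each paired with the common factor $\psi'/\psi\leq 0$ --- so your outline closes exactly as you anticipate and no further idea is needed.
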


\begin{proof}
	Under the given assumption $r_1=r_2=r$ implies $F_1=F_2=F.$
The reversed hazard rate function of $X_{n^{*}:n^{*}}(n^{*}_{1},n^{*}_{2})$ is
	\begin{eqnarray}\label{eq3.13}
	 \tilde{r}_{X_{n^{*}:n^{*}}}(n^{*}_{1},n^{*}_{2})(x)&=& \frac{\psi'\left[n^{*}_1\phi\left(F\left(x\lambda_1\right)\right)+n^{*}_2\phi\left(F\left(x\lambda_2\right)\right) \right]}{\psi\left[n^{*}_1\phi\left(F\left(x\lambda_1\right)\right)+n^{*}_2\phi\left(F\left(x\lambda_2\right)\right) \right]}\left[\frac{n^{*}_1\lambda_{1}f(x\lambda_1)}{\psi'[\phi\left(F\left(x\lambda_1\right)\right)]}+\frac{n^{*}_2\lambda_{2}f(x\lambda_2)}{\psi'[\phi\left(F\left(x\lambda_{2}\right)\right)]}\right]\nonumber\\
	 &=&\frac{\psi'\left[n^{*}_1\phi\left(F\left(x\lambda_1\right)\right)+n^{*}_2\phi\left(F\left(x\lambda_2\right)\right) \right]}{\psi\left[n^{*}_1\phi\left(F\left(x\lambda_1\right)\right)+n^{*}_2\phi\left(F\left(x\lambda_2\right)\right) \right]}\Bigg[\frac{n^{*}_1\lambda_{1}r(x\lambda_{1})[1-\psi[\phi\left(F\left(x\lambda_1\right)\right)]]}{\psi'[\phi\left(F\left(x\lambda_1\right)\right)]}\nonumber\\
	 &~&+\frac{n^{*}_2\lambda_{2}r(x\lambda_{2})[1-\psi[\phi\left(F\left(x\lambda_2\right)\right)]]}{\psi'[\phi\left(F\left(x\lambda_2\right)\right)]}\Bigg],
	\end{eqnarray}
	where $f$ is the probability density function corresponding to $F.$
		Denote $z=n^{*}_1\phi\left(F\left(x\lambda_1\right)\right)+n^{*}_2\phi\left(F\left(x\lambda_2\right)\right).$
	The partial derivative of $\tilde{r}_{X_{n^{*}:n^{*}}}(n^{*}_{1},n^{*}_{2})(x)$ with respect to $\lambda_1$ is obtained as
	\begin{eqnarray}\label{eq16}
	 \frac{\partial[\tilde{r}_{X_{n^{*}:n^{*}}}(n^{*}_{1},n^{*}_{2})(x)]}{\partial\lambda_1}&=&{n^{*}_1xr(x\lambda_{1})}\frac{d}{dz}\left[\frac{\psi'(z)}{\psi(z)}\right]\left[\frac{1-{\psi}\left[\phi\left[F\left(x\lambda_1\right)\right]\right]}{\psi'[\phi\left(F\left(x\lambda_1\right)\right)]}\right]\left[\sum_{i=1}^{n^{*}}\frac{\lambda_{i}f\left(x\lambda_i\right)}{\psi'[\phi\left(F\left(x\lambda_i\right)\right)]}\right]
	\nonumber\\
	 &~&+n^{*}_1x\lambda_{1}\left[r(x\lambda_{1})\right]^2\frac{\psi'(z)}{\psi(z)}\left[\frac{1-{\psi}(v)}{{\psi}'(v)}\frac{d}{dv}\left[\frac{1-{\psi}(v)}{{\psi}'(v)}\right]\right]_{v=\phi(F(x\lambda_{1}))}\nonumber\\
	 &~&+n^{*}_1\frac{d}{dw}\left[wr(w)\right]_{w=x\lambda_{1}}\left[\frac{1-{\psi}\left[\phi\left[F\left(x\lambda_1\right)\right]\right]}{{\psi}'\left[\phi\left[F\left(x\lambda_1\right)\right]\right]}\right]\frac{\psi'(z)}{\psi(z)}.
	\end{eqnarray}
	Utilizing Theorem $A.8$ of \cite{Marshall2011}, to obtain the desired result, we need to prove that $\tilde{r}_{X_{n^{*}:n^{*}}}(n^{*}_{1},n^{*}_{2})(x)$ is decreasing and Schur-convex with respect to $\boldsymbol{ \lambda}.$ Using the given assumptions and Equation \eqref{eq16}, the decreasing property of $\tilde{r}_{X_{n^{*}:n^{*}}}(n^{*}_{1},n^{*}_{2})(x)$ with respect to $\boldsymbol{ \lambda}$ is obvious. Further,
  according to Lemma \ref{lem2.1b} (Lemma \ref{lem2.1a}), to show Schur-convexity of $\tilde{r}_{X_{n^{*}:n^{*}}}(n^{*}_{1},n^{*}_{2})(x),$  we have to establish that
   for  $1\leq i\leq j\leq n^{*},$
 \begin{equation}\label{eq3.3}
 \left[\frac{\partial [\tilde{r}_{X_{n^{*}:n^{*}}}(n^{*}_{1},n^{*}_{2})(x)]}{\partial\lambda_i}-\frac{\partial [\tilde{r}_{X_{n^{*}:n^{*}}}(n^{*}_{1},n^{*}_{2})(x)]}{\partial\lambda_j}\right]\leq(\geq) 0, ~\text{for} ~\boldsymbol{ \lambda}\in\mathcal{E}_+~(\mathcal{D}_+).
 \end{equation}
	Now, consider the following three cases.\\
	Case I: For $1\leq i\leq j\leq n^{*}_1 ,$ $\lambda_i=\lambda_j=\lambda_1.$  Here, $\frac{\partial [\tilde{r}_{X_{n^{*}:n^{*}}}(n^{*}_{1},n^{*}_{2})(x)]}{\partial\lambda_i}-\frac{\partial [\tilde{r}_{X_{n^{*}:n^{*}}}(n^{*}_{1},n^{*}_{2})(x)]}{\partial\lambda_j}=0$.\\
		Case II: For $n^{*}_{1}+1\leq i\leq j\leq n^{*} ,$ $\lambda_i=\lambda_j=\lambda_2.$  Hence, $\frac{\partial [\tilde{r}_{X_{n^{*}:n^{*}}}(n^{*}_{1},n^{*}_{2})(x)]}{\partial\lambda_i}-\frac{\partial [\tilde{r}_{X_{n^{*}:n^{*}}}(n^{*}_{1},n^{*}_{2})(x)]}{\partial\lambda_j}=0$.\\
	Case III: For $1\leq i\leq n^{*}_1 $ and $n^{*}_{1}+1\leq j\leq n^{*},$ $\lambda_i=\lambda_1$ and $\lambda_j=\lambda_2$. Consider $\lambda_1\leq\lambda_2$, which gives $\phi(F(x\lambda_1))\geq\phi(F(x\lambda_2)).$ Here, we only discuss the proof when $\lambda_1\leq\lambda_2.$ The other case when $\lambda_1\ge\lambda_2$ can be proved in the similar way. The concavity property of  $\ln\psi$ provides $\frac{d}{dz}\left[\frac{\psi'(z)}{\psi(z)}\right]\leq 0.$ Again, using decreasing property of  $\frac{1-\psi}{{\psi}'}$, we have
	\begin{equation}\label{eq17}
	\frac{1-\psi(w)}{{\psi}'(w)}\Big|_{w=\phi[F(x\lambda_{1})]}\leq \frac{1-\psi(w)}{{\psi}'(w)}\Big|_{w=\phi[F(x\lambda_{2})]}\leq 0.
	\end{equation}
	Further, it has been assumed that $r(x)$ is decreasing, $x r(x)$ is decreasing and convex. Therefore, using $n^{*}_{1}\geq n^{*}_{2},$ we have
	\begin{equation}\label{eq18.}
	 r(x\lambda_{1})\geq  r(x\lambda_{2}),
	\end{equation}
	\begin{equation}\label{eq18}
	n^{*}_{1}x\lambda_{1} r(x\lambda_{1})\geq n^{*}_{2}x\lambda_{2} r(x\lambda_{2}) \text{ ~~~~    and }
	\end{equation}
	\begin{equation}\label{eq19}
n^{*}_{1}	\frac{d}{dw}\left[wr(w)\right]_{w=x\lambda_{1}}\leq n^{*}_{2}\frac{d}{dw}\left[wr(w)\right]_{w=x\lambda_{2}}\leq 0.
	\end{equation}
	Moreover, $\frac{1-\psi(w)}{{\psi}'(w)}\frac{d}{dw}[\frac{1-\psi(w)}{{\psi}'(w)}]$ is increasing. Therefore, we obtain the following inequality
	\begin{equation}\label{eq20}
	 \left[\frac{1-\psi(w)}{{\psi}'(w)}\frac{d}{dw}\left[\frac{1-\psi(w)}{{\psi}'(w)}\right]\right]_{w=\phi[F(x\lambda_{1})]}\geq \left[\frac{1-\psi(w)}{{\psi}'(w)}\frac{d}{dw}\left[\frac{1-\psi(w)}{{\psi}'(w)}\right]\right]_{w=\phi[F(x\lambda_{2})]}\geq 0.
	\end{equation}
	Now, combining \eqref{eq17}-\eqref{eq20} and the given assumptions, we obtain that the inequality given by \eqref{eq3.3} holds. This completes the proof.
	\end{proof}

In the next theorem, we show that the largest order statistics $X_{n:n}(n_{1},n_{2})$ and $X_{n^{*}:n^{*}}(n^{*}_{1},n^{*}_{2})$ are comparable according to the reversed hazard rate order.
\begin{theorem}\label{th8}
Let Assumption \ref{ass3} hold with $\psi_1=\psi$ and $r_1= r_2=r.$ Then, for  $\boldsymbol{\lambda}\in\mathcal{D_+},$ we have
$$(n_{1},n_{2})\succeq_{w}(n^{*}_{1},n^{*}_{2})\Rightarrow X_{n^{*}:n^{*}}(n^{*}_{1},n^{*}_{2})\leq_{rh}X_{n:n}(n_{1},n_{2}),$$ provided $\ln\psi$ is concave, $\frac{1-\psi}{\psi'}$ and $xr(x)$ are decreasing.
\end{theorem}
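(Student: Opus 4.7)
The plan is to view the reversed hazard rate of $X_{n:n}(n_1,n_2)$ as a function of the pair $(n_1,n_2)\in\mathcal{E}_+$ (with $x$ and $\boldsymbol\lambda$ held fixed) and show that it is simultaneously coordinatewise nondecreasing and Schur-convex. A standard fact then upgrades the weak submajorization hypothesis $(n_1,n_2)\succeq_w(n_1^*,n_2^*)$ to the pointwise inequality $\tilde r_{X_{n^*:n^*}(n_1^*,n_2^*)}(x)\le\tilde r_{X_{n:n}(n_1,n_2)}(x)$, which is the desired reversed hazard rate order.

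First, I would use $r_1=r_2=r$ (hence $F_1=F_2=F$) and $f=r(1-F)$ to obtain the closed form
\[
\tilde r_{X_{n:n}(n_1,n_2)}(x)=\frac{\psi'(z)}{\psi(z)}\sum_{i=1}^{2} n_i\,\lambda_i\, r(x\lambda_i)\,\frac{1-\psi(u_i)}{\psi'(u_i)},
\]
with $z=n_1 u_1+n_2 u_2$ and $u_i=\phi(F(x\lambda_i))$. Introducing $h_i=\lambda_i r(x\lambda_i)(1-\psi(u_i))/\psi'(u_i)$ and treating $n_1,n_2$ as continuous positive variables, the partial derivative reads
\[
\frac{\partial \tilde r}{\partial n_k}=u_k\,(\ln\psi)''(z)\sum_{i=1}^{2} n_i h_i+\frac{\psi'(z)}{\psi(z)}\,h_k.
\]

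Coordinate monotonicity $\partial\tilde r/\partial n_k\ge 0$ follows by sign bookkeeping: log-concavity of $\psi$ gives $(\ln\psi)''(z)\le 0$, while $\psi$ decreasing gives $\psi'(z)/\psi(z)\le 0$ and $h_i\le 0$ for each $i$, so both summands are nonnegative. For Schur-convexity on $\mathcal{E}_+$, Lemma \ref{lem2.1b} reduces the task to showing $\partial\tilde r/\partial n_2\ge\partial\tilde r/\partial n_1$. The difference splits as $(u_2-u_1)(\ln\psi)''(z)\sum_i n_i h_i+(\psi'(z)/\psi(z))(h_2-h_1)$; since $\boldsymbol\lambda\in\mathcal{D}_+$ gives $\lambda_1\ge\lambda_2$ and $\phi$ is decreasing, $u_1\le u_2$, which makes the first summand nonnegative. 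The second summand requires $h_1\ge h_2$, which I would obtain by multiplying two pieces: $xr(x)$ decreasing yields $\lambda_2 r(x\lambda_2)\ge\lambda_1 r(x\lambda_1)\ge 0$, and $(1-\psi)/\psi'$ decreasing together with $u_1\le u_2$ yields $0\ge(1-\psi(u_1))/\psi'(u_1)\ge(1-\psi(u_2))/\psi'(u_2)$; combining these gives $h_1\ge h_2$.

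With both properties established, the conclusion follows from the general principle that a nondecreasing Schur-convex function on $\mathcal{E}_+$ preserves weak submajorization. Concretely, the hypothesis $(n_1,n_2)\succeq_w(n_1^*,n_2^*)$ together with $n_1\le n_1^*\le n_2^*\le n_2$ forces $n_2-n_2^*\ge n_1^*-n_1\ge 0$, and one can travel from $(n_1^*,n_2^*)$ to $(n_1,n_2)$ via the intermediate point $(n_1,\,n_2^*+(n_1^*-n_1))$: the first leg is a mass-preserving swap (handled by Schur-convexity) and the second is a pure increase of the second coordinate (handled by coordinate monotonicity). The main obstacle is the sign analysis producing $h_1\ge h_2$, since all three hypotheses on $\psi$ and $r$ enter this single step and there is no slack in the sign orientation.
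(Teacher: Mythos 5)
Your proposal is correct, and it reaches the conclusion by a genuinely different packaging than the paper, even though the two arguments consume exactly the same analytic facts. The paper never treats $(n_1,n_2)$ as continuous variables: it writes the target inequality $\tilde r_{X_{n:n}}(n_1,n_2)(x)\geq\tilde r_{X_{n^*:n^*}}(n^*_1,n^*_2)(x)$ as a ratio inequality and splits it into two separate factor inequalities, one comparing $\psi'(z)/\psi(z)$ at the two arguments $z=\sum n_i\phi(F(x\lambda_i))$ and $z^*=\sum n_i^*\phi(F(x\lambda_i))$ (settled by log-concavity of $\psi$ plus $0\le n_1^*-n_1\le n_2-n_2^*$ and $0\le\phi(F(x\lambda_1))\le\phi(F(x\lambda_2))$), and one comparing the two weighted sums $\sum n_i h_i$ and $\sum n_i^* h_i$ (settled by your $h_1\ge h_2$, obtained from exactly the same two monotonicity hypotheses you use); since each factor is negative, the two one-sided bounds multiply to give the result. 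You instead extend $\tilde r$ to real $(n_1,n_2)$, verify coordinatewise monotonicity and Schur-convexity on $\mathcal{E}_+$ via the derivative criterion of Lemma \ref{lem2.1b}, and invoke the increasing-plus-Schur-convex characterization of weak submajorization (your explicit two-leg path through $(n_1,\,n_2^*+n_1^*-n_1)$ is the discrete shadow of the paper's two factor inequalities, and it correctly stays inside $\mathcal{E}_+$). Your route is the one the paper itself uses for the $\boldsymbol{\lambda}$-majorization results (Theorems \ref{th1} and \ref{th7}), so it is arguably more uniform; the paper's route for this particular theorem avoids the (harmless but slightly informal) step of differentiating with respect to integer sample sizes. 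All the sign bookkeeping in your argument checks out: $h_i\le 0$, $(\ln\psi)''\le 0$, $u_1\le u_2$ from $\lambda_1\ge\lambda_2$, and $h_1\ge h_2$ from multiplying $0\le\lambda_1 r(x\lambda_1)\le\lambda_2 r(x\lambda_2)$ by $0\ge(1-\psi(u_1))/\psi'(u_1)\ge(1-\psi(u_2))/\psi'(u_2)$.
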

\begin{proof}
The stated result will be proved, if we show that $\tilde{r}_{X_{n:n}}(n_{1},n_{2})(x)\geq 	\tilde{r}_{X_{n^{*}:n^{*}}}(n^{*}_{1},n^{*}_{2})(x).$ Equivalently,
\begin{equation}\label{eq22}
 \frac{\psi'\left[\displaystyle\sum_{i=1}^{n}\phi\left(F\left(x\lambda_i\right)\right)\right]}{\psi\left[\displaystyle\sum_{i=1}^{n}\phi\left(F\left(x\lambda_i\right)\right)\right]}\times\frac{\psi\left[\displaystyle\sum_{i=1}^{n^{*}}\phi\left(F\left(x\lambda_i\right)\right)\right]}{\psi'\left[\displaystyle\sum_{i=1}^{n^{*}}\phi\left(F\left(x\lambda_i\right)\right)\right]}
 \geq\displaystyle\frac{ \displaystyle\sum_{i=1}^{n^{*}}\frac{\lambda_{i}r(x\lambda_{i})[1-\psi[\phi\left(F\left(x\lambda_i\right)\right)]]}{\psi'[\phi\left(F\left(x\lambda_i\right)\right)]}}{\displaystyle\sum_{i=1}^{n}\frac{\lambda_{i}r(x\lambda_{i})[1-\psi[\phi\left(F\left(x\lambda_i\right)\right)]]}{\psi'[\phi\left(F\left(x\lambda_i\right)\right)]}}.
\end{equation}
The preceding inequality holds if the following two inequalities are satisfied,
\begin{equation}\label{eq23}
	 \frac{\psi'\left[\sum_{i=1}^{n^*}\phi\left(F\left(x\lambda_i\right)\right)\right]}{\psi\left[\sum_{i=1}^{n^*}\phi\left(F\left(x\lambda_i\right)\right)\right]}\geq \frac{\psi'\left[\sum_{i=1}^{n}\phi\left(F\left(x\lambda_i\right)\right)\right]}{\psi\left[\sum_{i=1}^{n}\phi\left(F\left(x\lambda_i\right)\right)\right]}
	\Leftrightarrow (n^{*}_{1}-n_{1})\phi(F(x\lambda_{1}))\leq  (n_{2}-n^{*}_{2})\phi(F(x\lambda_{2}))
\end{equation}
and
\begin{eqnarray}\label{eq24}
	 \sum_{i=1}^{n^*}\frac{\lambda_{i}r(x\lambda_{i})[1-\psi[\phi\left(F\left(x\lambda_i\right)\right)]]}{\psi'[\phi\left(F\left(x\lambda_i\right)\right)]}&\geq&\sum_{i=1}^{n}\frac{\lambda_{i}r(x\lambda_{i})[1-\psi[\phi\left(F\left(x\lambda_i\right)\right)]]}{\psi'[\phi\left(F\left(x\lambda_i\right)\right)]}\nonumber\\\Leftrightarrow
	 (n^{*}_1-n_{1})\frac{\lambda_{1}r(x\lambda_{1})[1-\psi[\phi\left(F\left(x\lambda_1\right)\right)]]}{\psi'[\phi\left(F\left(x\lambda_1\right)\right)]}&\geq&(n_{2}-n^{*}_{2})\frac{\lambda_{2}r(x\lambda_{2})[1-\psi[\phi\left(F\left(x\lambda_2\right)\right)]]}{\psi'[\phi\left(F\left(x\lambda_2\right)\right)]}.
	\end{eqnarray}
	Furthermore,
	\begin{eqnarray}\label{eq25}
	(n_{1},n_{2})\succeq_{w}(n^{*}_{1},n^{*}_{2})
	\Rightarrow(n_{1}+n_{2})\geq(n^{*}_{1}+n^{*}_{2})
	\Rightarrow(n_{2}-n^{*}_{2})\geq (n^{*}_{1}-n_{1})\geq 0.
	\end{eqnarray}
	 Also, $$\lambda_{1}\geq \lambda_2\Rightarrow\phi\left(F\left(x\lambda_2\right)\right)\geq\phi\left(F\left(x\lambda_1\right)\right)\geq0.$$
	Moreover, $\frac{1-\psi}{{\psi}'}$ is decreasing. Thus,
	\begin{equation}\label{eq26}
	\frac{1-\psi(w)}{{\psi}'(w)}\Big|_{w=\phi[F(x\lambda_{2})]}\leq \frac{1-\psi(w)}{{\psi}'(w)}\Big|_{w=\phi[F(x\lambda_{1})]}\leq 0.
	\end{equation}
	Using the decreasing property of $x r(x)$, we have
	\begin{equation}\label{eq27}
	x\lambda_{1} r(x\lambda_{1})\leq x\lambda_{2} r(x\lambda_{2}).
	\end{equation}
Combining Equations \eqref{eq25}, \eqref{eq26} and \eqref{eq27}, the inequality in \eqref{eq24} can be obtained. Using \eqref{eq25} and the assumption that $\psi$ is log-concave, we get the  inequality \eqref{eq23}. Hence, the proof follows.
\end{proof}
Now, we are ready to state a result which shows that the largest order statistics $X_{n:n}(n_{1},n_{2})$ and $Y_{n^{*}:n^{*}}(n^{*}_{1},n^{*}_{2})$ can be compared with respect to the reversed hazard rate order.
\begin{theorem}\label{th9}
Let the set-up in Assumption \ref{ass4} hold with $\psi_1=\psi_2=\psi$ and $r_1= r_2=r$. Also, assume $\boldsymbol{\lambda},~\boldsymbol{\mu}\in\mathcal{D_+}$ and
$(n_{1},n_{2})\succeq_{w}(n^{*}_{1},n^{*}_{2})$. Then,
$$(\underbrace{\lambda_{1},\cdots,\lambda_{1}}_{n^*_{1}},\underbrace{\lambda_{2},\cdots,\lambda_{2}}_{n^*_{2}})\succeq^{w}
(\underbrace{\mu_{1},\cdots,\mu_{1}}_{n^{*}_{1}},\underbrace{\mu_{2},\cdots,\mu_{2}}_{n^{*}_{2}})\Rightarrow  Y_{n^{*}:n^{*}}(n^{*}_{1},n^{*}_{2})\leq_{rh}X_{n:n}(n_{1},n_{2}),$$ provided $\psi$ is log-concave, $\frac{1-\psi}{\psi'}$ is decreasing, $\frac{1-\psi}{\psi'}[\frac{1-\psi}{\psi'}]'$ is increasing,  $xr(x)$ is decreasing, convex  and $r(x)$  is decreasing.
\end{theorem}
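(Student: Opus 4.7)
The plan is to obtain this theorem by chaining Theorem \ref{th7} and Theorem \ref{th8} via the transitivity of the reversed hazard rate order, in exactly the same way Theorem \ref{th4} was assembled out of Theorem \ref{th1} and Theorem \ref{th3}. The structural hint is already visible: the hypothesis set of Theorem \ref{th9} is precisely the union of the hypothesis sets of Theorems \ref{th7} and \ref{th8}, modulo the ambient Assumption \ref{ass4}.

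First I would activate Theorem \ref{th7}. Assumption \ref{ass4} forces $n^{*}_{1}\leq n^{*}_{2}$, we are handed $\boldsymbol{\lambda},\boldsymbol{\mu}\in\mathcal{D}_+$, and the equality $r_1=r_2=r$ collapses the two baseline distributions into a single $F$, which is the setting used in the proof of Theorem \ref{th7}. Log-concavity of $\psi$, monotonicity of $\frac{1-\psi}{\psi'}$, the monotonicity of $\frac{1-\psi}{\psi'}\bigl[\frac{1-\psi}{\psi'}\bigr]'$, the decreasing property of $r(x)$, and the decreasing-convexity of $xr(x)$ are all present. Hence Theorem \ref{th7} yields
\begin{equation}\label{eq:th9a}
(\underbrace{\lambda_{1},\cdots,\lambda_{1}}_{n^{*}_{1}},\underbrace{\lambda_{2},\cdots,\lambda_{2}}_{n^{*}_{2}})\succeq^{w}(\underbrace{\mu_{1},\cdots,\mu_{1}}_{n^{*}_{1}},\underbrace{\mu_{2},\cdots,\mu_{2}}_{n^{*}_{2}})\Rightarrow Y_{n^{*}:n^{*}}(n^{*}_{1},n^{*}_{2})\leq_{rh}X_{n^{*}:n^{*}}(n^{*}_{1},n^{*}_{2}).
\end{equation}

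Next I would apply Theorem \ref{th8} to the $X$-family alone. The $X_i$'s share generator $\psi_1=\psi$ and baseline $F_1=F_2=F$, with $\boldsymbol{\lambda}\in\mathcal{D}_+$; log-concavity of $\psi$, decreasing $\frac{1-\psi}{\psi'}$, and decreasing $xr(x)$ all hold, so the hypotheses of Theorem \ref{th8} are met. Therefore
\begin{equation}\label{eq:th9b}
(n_{1},n_{2})\succeq_{w}(n^{*}_{1},n^{*}_{2})\Rightarrow X_{n^{*}:n^{*}}(n^{*}_{1},n^{*}_{2})\leq_{rh}X_{n:n}(n_{1},n_{2}).
\end{equation}
Concatenating \eqref{eq:th9a} and \eqref{eq:th9b} via transitivity of $\leq_{rh}$ gives the desired conclusion $Y_{n^{*}:n^{*}}(n^{*}_{1},n^{*}_{2})\leq_{rh}X_{n:n}(n_{1},n_{2})$.

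The only genuine thing to check, and hence the main (minor) obstacle, is that Theorem \ref{th8} was stated under Assumption \ref{ass3}, which involves a single Archimedean-copula sample, while here we work inside Assumption \ref{ass4}, which involves two samples with generators $\psi_1$ and $\psi_2$. Since step \eqref{eq:th9b} is a purely intra-$X$ comparison, Assumption \ref{ass4} restricted to the $X$-sample is precisely Assumption \ref{ass3}; there is no circularity and no need to reprove anything. Everything else is bookkeeping, so no fresh computation beyond those already performed in Theorems \ref{th7} and \ref{th8} is required.
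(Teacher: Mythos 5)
Your proposal is correct and coincides with the paper's own proof: the authors likewise deduce Theorem \ref{th9} by combining Theorem \ref{th7} (giving $Y_{n^{*}:n^{*}}(n^{*}_{1},n^{*}_{2})\leq_{rh}X_{n^{*}:n^{*}}(n^{*}_{1},n^{*}_{2})$) with Theorem \ref{th8} (giving $X_{n^{*}:n^{*}}(n^{*}_{1},n^{*}_{2})\leq_{rh}X_{n:n}(n_{1},n_{2})$) and transitivity of $\leq_{rh}$. Your additional remarks—that Assumption \ref{ass4} forces $n^{*}_{1}\leq n^{*}_{2}$ so the $\mathcal{D}_+$ branch of Theorem \ref{th7} applies, and that Assumption \ref{ass4} restricted to the $X$-sample reduces to Assumption \ref{ass3}—are correct and only make explicit what the paper leaves implicit.
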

\begin{proof}
According to Theorem \ref{th7}, we have
\begin{equation}\label{eq28}
(\underbrace{\lambda_{1},\cdots,\lambda_{1}}_{n^*_{1}},\underbrace{\lambda_{2},\cdots,\lambda_{2}}_{n^*_{2}})\succeq^{w}
(\underbrace{\mu_{1},\cdots,\mu_{1}}_{n^{*}_{1}},\underbrace{\mu_{2},\cdots,\mu_{2}}_{n^{*}_{2}})\Rightarrow Y_{n^{*}:n^{*}}(n^{*}_{1},n^{*}_{2}) \leq_{rh}X_{n^{*}:n^{*}}(n^{*}_{1},n^{*}_{2}).
\end{equation}
Also, from Theorem \ref{th8}, we get
\begin{equation}\label{eq29}
(n_{1},n_{2})\succeq_{w}(n^{*}_{1},n^{*}_{2})\Rightarrow X_{n^{*}:n^{*}}(n^{*}_{1},n^{*}_{2})\leq_{rh}X_{n:n}(n_{1},n_{2}).
\end{equation}
Thus, the proof of the theorem follows after combining the inequalities given by \eqref{eq28} and \eqref{eq29}.
\end{proof}

Below, we consider an example to illustrate Theorem \ref{th9}.

\begin{example}\label{ex3.2}
 Consider $\boldsymbol{\lambda}=(3,2),~\boldsymbol{\mu}=(6,5),~(n_1,n_2)=(2,10),~(n^*_1,n^*_2)=(3,4),~\psi=e^{\frac{1}{0.2}(1-e^x)},~x>0.$ Also, let the baseline distribution be $F(x)=1-\left(\frac{x}{b}\right)^{-a},~x\geq b>0,~a>0.$
 It is not hard to see that for $a=5$ and $b=1,$ all the conditions of Theorem \ref{th9} are satisfied. Further, we plot the graph of $F_{X_{12:12}}(2,10)(x)/F_{Y_{7:7}}(3,4)(x)$  in Figure $2a$. This shows that the result in Theorem \ref{th9} holds.
\begin{figure}[h]
	\begin{center}
		\subfigure[]{\label{c3}\includegraphics[height=2.41in]{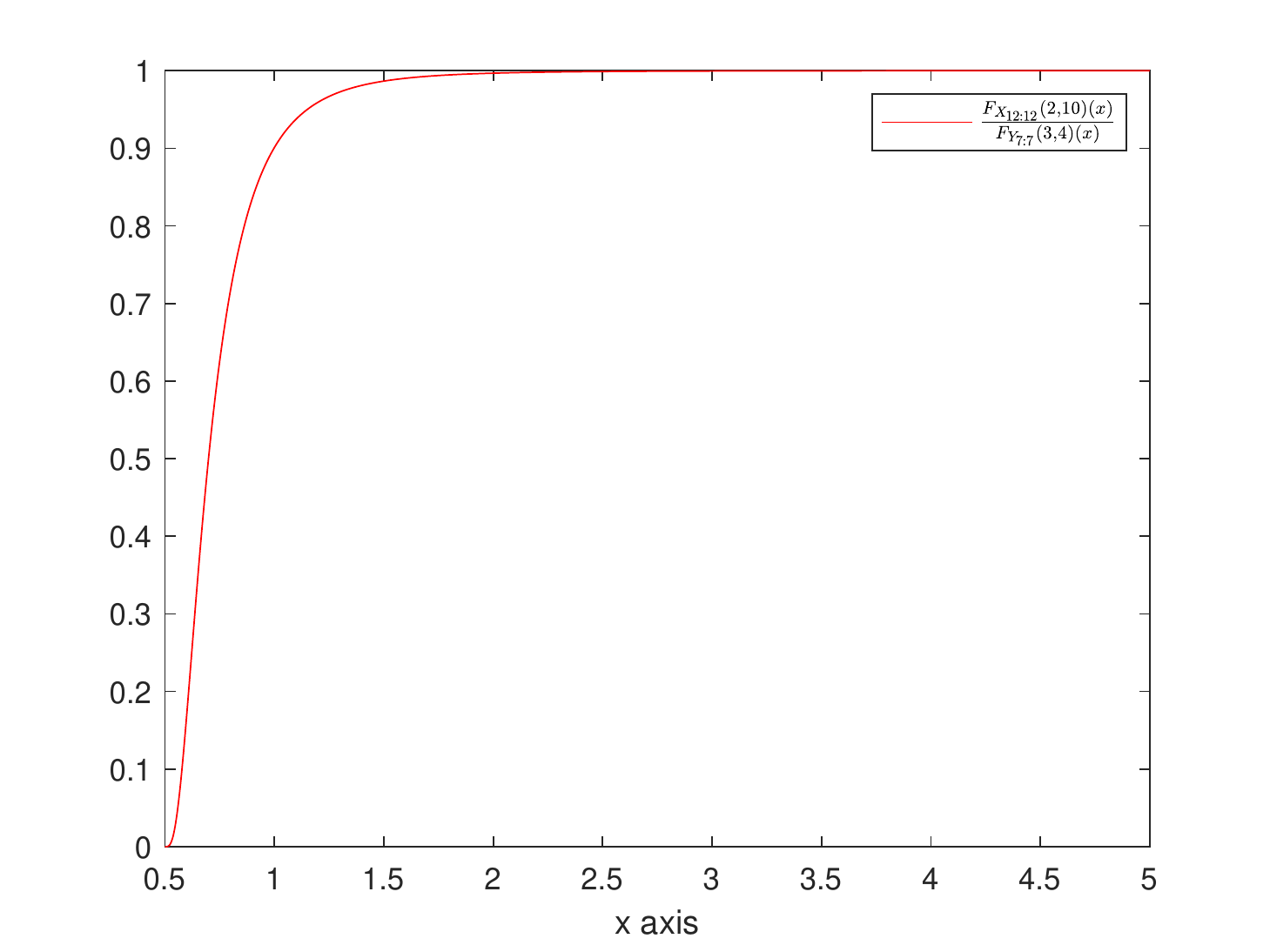}}
			\subfigure[]{\includegraphics[height=2.41in]{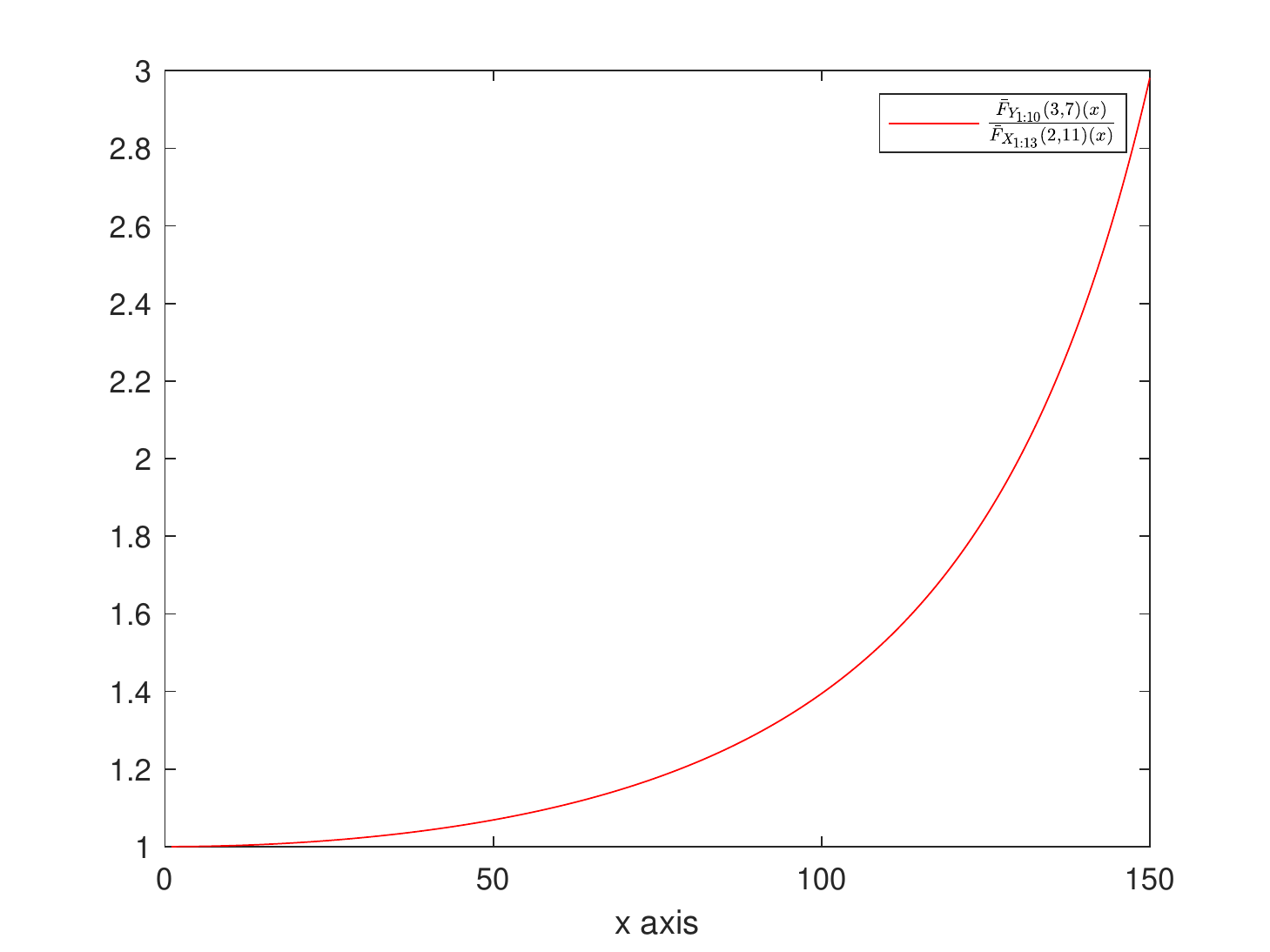}}
		\caption{
			(a) Plot of the ratio of two distribution functions $F_{X_{12:12}}(2,10)(x)/F_{Y_{7:7}}(3,4)(x)$ in Example \ref{ex3.2}. (b) Plot of $\bar{F}_{Y_{1:10}}(3,7)(x)/\bar{F}_{X_{1:13}}(2,11)(x)$ as in Example \ref{ex3.5}.
		}
	\end{center}
\end{figure}
\end{example}
Now, we derive conditions such that the star order holds between $X_{n^{*}:n^{*}}(n^{*}_{1},n^{*}_{2})$ and $Y_{n^{*}:n^{*}}(n^{*}_{1},n^{*}_{2})$. Denote $\lambda_{2:2}=\max\{\lambda_1,\lambda_2\}, ~\lambda_{1:2}=\min\{\lambda_1,\lambda_2\},~\mu_{2:2}=\max\{\mu_1,\mu_2\}$ and $\mu_{1:2}=\min\{\mu_1,\mu_2\}.$

\begin{theorem}\label{th}
	Under the set-up as in Assumption \ref{ass1}, with $\tilde{r}_1=\tilde{r}_2=\tilde{r}$ and $\psi_1=\psi_2=\psi$, we have
	$$\frac{\lambda_{2:2}}{\lambda_{1:2}}\geq\frac{\mu_{2:2}}{\mu_{1:2}}\Rightarrow Y_{n^{*}:n^{*}}(n^{*}_{1},n^{*}_{2})\leq_{*}X_{n^{*}:n^{*}}(n^{*}_{1},n^{*}_{2}),$$ provided
	$\frac{\psi}{\psi'}$ is decreasing, convex, $\frac{x\tilde{r}'(x)}{\tilde{r}(x)}$ is decreasing and  $x\tilde{r}(x)$ is increasing.
\end{theorem}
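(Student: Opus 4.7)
The plan is to reduce to a one-parameter family via scale invariance of $\leq_{*}$ and then apply Saunders' Lemma \ref{lem4}. Since the Archimedean copula is exchangeable and $\tilde{r}_1=\tilde{r}_2$ forces $F_1=F_2=F$, I would use block-swap symmetry to assume without loss of generality $\lambda_1\leq\lambda_2$ and $\mu_1\leq\mu_2$. Because the star order is preserved under independent positive rescaling on each side, rescaling $X$ by $\lambda_1$ and $Y$ by $\mu_1$ normalizes the problem to $\lambda_1=\mu_1=1$; writing $\lambda:=\lambda_2\geq 1$ and $\mu:=\mu_2\geq 1$, the hypothesis becomes $\lambda\geq\mu$. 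It then suffices to prove, for the single-parameter family
$$F_\lambda(x)=\psi\bigl[n^*_1\phi(F(x))+n^*_2\phi(F(x\lambda))\bigr],$$
that $F_\mu\leq_{*} F_\lambda$ whenever $\mu\leq\lambda$.

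By Lemma \ref{lem4} this reduces to showing that $F'_\lambda(x)/\bigl(xf_\lambda(x)\bigr)$ is decreasing in $x$. Computing the partials with respect to $\lambda$ and $x$ and setting $T(y):=\phi'(F(y))f(y)<0$, the ratio collapses to the clean form
$$\frac{F'_\lambda(x)}{xf_\lambda(x)}=\frac{n^*_2}{n^*_1 R(x)+n^*_2\lambda},\qquad R(x):=\frac{T(x)}{T(x\lambda)}>0,$$
so the task becomes to show $R(x)$ is increasing in $x$. A short logarithmic-derivative manipulation, together with $\lambda\geq 1$ (hence $x\leq x\lambda$), reduces this further to showing that $y\cdot T'(y)/T(y)$ is decreasing in $y$.

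The decisive step is the factorization $T(y)=\tilde{r}(y)\cdot Q(v_y)$, where $Q(v):=\psi(v)/\psi'(v)$ and $v_y:=\phi(F(y))$, obtained from $f(y)=\tilde{r}(y)\psi(\phi(F(y)))$. Using the self-referential identity $v'_y=T(y)=\tilde{r}(y)Q(v_y)$, logarithmic differentiation splits into
$$y\cdot\frac{T'(y)}{T(y)}=\frac{y\tilde{r}'(y)}{\tilde{r}(y)}+y\tilde{r}(y)\cdot Q'(v_y).$$
The first summand is decreasing in $y$ directly by the hypothesis on $x\tilde{r}'(x)/\tilde{r}(x)$. For the second, the assumption that $Q$ is decreasing and convex gives $Q'\leq 0$ with $Q'$ nondecreasing in $v$; since $v_y$ decreases in $y$, $Q'(v_y)$ is nonincreasing in $y$, so $|Q'(v_y)|$ is nondecreasing. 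Combined with the hypothesis that $y\tilde{r}(y)>0$ is increasing, the second summand is negative with nondecreasing magnitude, hence nonincreasing in $y$. Summing, $yT'(y)/T(y)$ is decreasing in $y$, which closes the argument.

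The main obstacle I foresee is careful sign bookkeeping: $\psi'$, $\phi'$, $Q$, and $T$ are all negative, while the four hypotheses each control monotonicity in a different direction. The decisive observation will be the factorization $T=\tilde{r}\cdot(Q\circ\phi\circ F)$ together with the identity $v'_y=T(y)$, which is what lets $yT'(y)/T(y)$ split into two summands that each inherit monotonicity from a single hypothesis, so the four hypotheses combine cleanly without needing to be balanced against one another.
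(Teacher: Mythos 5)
Your proposal is correct and follows essentially the same route as the paper: reduction to a one-parameter family, an application of Saunders' Lemma \ref{lem4}, and the same decisive decomposition of the logarithmic derivative into the two summands $y\tilde r'(y)/\tilde r(y)$ and $y\tilde r(y)\,Q'(v_y)$ with $Q=\psi/\psi'$, each of which is monotone by exactly one hypothesis. Your normalization $\lambda_1=\mu_1=1$ via scale invariance is slightly cleaner than the paper's sum-to-one normalization (it absorbs the paper's separate Case II and turns the ratio hypothesis directly into $\lambda\ge\mu$); the one caveat is that ``block-swap symmetry'' is not a genuine symmetry of the model when $n^*_1\ne n^*_2$, but the paper's own proof makes the identical tacit assumption that the larger scale parameter sits in the second block of both samples, so this is not a gap relative to the published argument.
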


\begin{proof}
	Under the assumption, $\tilde{r}_1=\tilde{r}_2=\tilde{r}$ gives $F_1=F_2=F.$
	The distribution functions of $X_{n^{*}:n^{*}}(n^{*}_1,n^{*}_2)$ and $Y_{n^{*}:n^{*}}(n^{*}_1,n^{*}_2)$ are respectively given by
	 $$F_{X_{n^{*}:n^{*}}}(n^{*}_1,n^{*}_2)(x)=\psi\left[n^{*}_1\phi\left(F\left(x\lambda_1\right)\right)+n^{*}_2\phi\left(F\left(x\lambda_2\right)\right) \right]$$ and
	 $$F_{Y_{n^{*}:n^{*}}}(n^{*}_1,n^{*}_2)(x)=\psi\left[n^{*}_1\phi\left(F\left(x\mu_1\right)\right)+n^{*}_2\phi\left(F\left(x\mu_2\right)\right) \right].$$
	
	To obtain the required result, we consider two cases.\\
	Case-I: $\lambda_{1}+\lambda_{2}=\mu_{1}+\mu_{2}.$\\
	For convenience, we assume that $\lambda_{1}+\lambda_{2}=\mu_{1}+\mu_{2}=1.$ For this case, it is clear that $(\lambda_1,\lambda_2)\succeq^{m}(\mu_1,\mu_2)$. Now, take $\lambda_2=\lambda\geq \lambda_1,~\mu_2=\mu\geq\mu_1.$ Hence, $\lambda_{1}=1-\lambda$ and $\mu_{1}=1-\mu.$ Based on this, the distribution functions of $X_{n^{*}:n^{*}}(n^{*}_1,n^{*}_2)$ and $Y_{n^{*}:n^{*}}(n^{*}_1,n^{*}_2)$ can be written in the following form
	 $$F_{X_{n^{*}:n^{*}}}(n^{*}_1,n^{*}_2)(x)\overset{def}{=}F_{\lambda}(x)=\psi\left[n^{*}_1\phi\left(F\left(x(1-\lambda)\right)\right)+n^{*}_2\phi\left(F\left(x\lambda\right)\right) \right]$$ and
	 $$F_{Y_{n^{*}:n^{*}}}(n^{*}_1,n^{*}_2)(x)\overset{def}{=}F_{\mu}(x)=\psi\left[n^{*}_1\phi\left(F\left(x(1-\mu\right)\right)+n^{*}_2\phi\left(F\left(x\mu\right)\right) \right].$$
	Now, according to Lemma \ref{lem4}, we have to show that $\frac{F'_{\lambda}(x)}{xf_{\lambda}(x)}$ is decreasing in $x\in \mathbb{ R}^+,$ for $\lambda\in(1/2,1].$ The derivative of $F_\lambda$, with respect to $\lambda$ is given by
	\begin{eqnarray}\nonumber
		F'_{\lambda}(x)&=&
		 \left[-xn^{*}_1\tilde{r}(x(1-\lambda))\frac{\psi[\phi\left(F\left(x(1-\lambda)\right)\right)]}{\psi'[{\phi\left(F\left(x(1-\lambda)\right)\right)}]}+xn^{*}_2\tilde{r}(x\lambda)\frac{\psi[\phi\left(F\left(x\lambda\right)\right) ]}{\psi'[\phi\left(F\left(x\lambda\right)\right) ]}\right]\nonumber\\
		 &~&\times\psi'\left[n^{*}_1\phi\left(F\left(x(1-\lambda)\right)\right)+n^{*}_2\phi\left(F\left(x\lambda\right)\right) \right].
	\end{eqnarray}
	Also, the probability density function corresponding to $F_{\lambda}$ is
	\begin{eqnarray}
		f_{\lambda}(x)&=&
		\left[(1-\lambda) n^{*}_1\tilde{r}(x(1-\lambda))\frac{\psi[\phi\left(F\left(x(1-\lambda)\right)\right)]}{\psi'[{\phi\left(F\left(x(1-\lambda)\right)\right)}]}+\lambda n^{*}_2\tilde{r}(x\lambda)\frac{\psi[\phi\left(F\left(x\lambda\right)\right) ]}{\psi'[\phi\left(F\left(x\lambda\right)\right) ]}\right]\nonumber\\
		 &~&\times\psi'\left[n^{*}_1\phi\left(F\left(x(1-\lambda)\right)\right)+n^{*}_2\phi\left(F\left(x\lambda\right)\right) \right].
	\end{eqnarray}
	Therefore,
	\begin{eqnarray}\nonumber
		\frac{F'_{\lambda}(x)}{xf_{\lambda}(x)}=
		 \left(\lambda+\left[\frac{n^{*}_2\tilde{r}(x\lambda)\displaystyle\frac{\psi[\phi\left(F\left(x\lambda\right)\right) ]}{\psi'[\phi\left(F\left(x\lambda\right)\right) ]}}{n^{*}_1\tilde{r}(x(1-\lambda))\displaystyle\frac{\psi[\phi\left(F\left(x(1-\lambda)\right)\right) ]}{\psi'[\phi\left(F\left(x(1-\lambda)\right)\right) ]}}-1\right]^{-1}\right)^{-1}.
	\end{eqnarray}
	Thus, it suffices to show that
	$L(x)=\left(\tilde{r}(x\lambda)\frac{\psi[\phi\left(F\left(x\lambda\right)\right) ]}{\psi'[\phi\left(F\left(x\lambda\right)\right) ]}\right)/\left(\tilde{r}(x(1-\lambda))\frac{\psi[\phi\left(F\left(x(1-\lambda)\right)\right) ]}{\psi'[\phi\left(F\left(x(1-\lambda)\right)\right) ]}\right)$ is decreasing in $x\in\mathbb{ R}^+$, for $\lambda\in(1/2,1].$
	The derivative of $L(x)$ with respect to $x$ is obtained as
	\begin{eqnarray}
		 L'(x)&\overset{sign}{=}&\frac{\lambda\tilde{r}'(x\lambda)}{\tilde{r}(x\lambda)}+\lambda\tilde{r}(x\lambda)\left[\frac{\psi[\phi(F(x\lambda))]}{\psi'[\phi(F(x\lambda))]}\right]'-\frac{(1-\lambda)\tilde{r}'(x(1-\lambda))}{\tilde{r}((1-\lambda)x)}\nonumber\\
	 &~&-(1-\lambda)\tilde{r}(x(1-\lambda))\left[\frac{\psi[\phi(F(x(1-\lambda)))]}{\psi'[\phi(F(x(1-\lambda)))]}\right]'.\nonumber
	\end{eqnarray}
	Under the assumptions made, $\frac{x\tilde{r}'(x)}{\tilde{r}(x)}$ is decreasing and  $x\tilde{r}(x)$ is increasing. Therefore, for $\lambda\in(1/2,1],$
	\begin{equation}\label{s1}
		 \frac{x\lambda\tilde{r}'(x\lambda)}{\tilde{r}(x\lambda)}\leq\frac{x(1-\lambda)\tilde{r}'(x(1-\lambda))}{\tilde{r}(x(1-\lambda))}\leq 0\text{ and }x\lambda\tilde{r}(x\lambda) \geq x(1-\lambda)\tilde{r}(x(1-\lambda))\geq 0.
	\end{equation}
	Also, since $\frac{\psi}{\psi'}$ is decreasing and convex, we have
	\begin{equation}\label{s2}
		\left[\frac{\psi[\phi(F(x\lambda))]}{\psi'[\phi(F(x\lambda))]}\right]'\leq \left[\frac{\psi[\phi(F(x(1-\lambda)))]}{\psi'[\phi(F(x(1-\lambda)))]}\right]'\leq 0.
	\end{equation}
	Now, combining Equations \eqref{s1} and \eqref{s2}, we get $L'(x)\leq 0,$ for $x\in\mathbb{R}^+.$ \\
	Case-II. $\lambda_1+\lambda_2\neq\mu_1+\mu_{2}.$ \\ In this case, we can take $\lambda_1+\lambda_2 = k(\mu_1+\mu_{2}),$ where $k$ is a scalar. Hence, $(k\mu_1,k\mu_2)\preceq^{m}(\lambda_1,\lambda_2)$. Let us consider $n^*$ dependent nonnegative random variables sharing Archimedean copula with generator $\psi,$ such that  $Z_{i}\sim F(k\mu_{1}x),$ for $i=1,\cdots,n^*_1$ and $Z_{j}\sim F(k\mu_{2}x),$ for $j={n^*_{1}+1},\cdots,n^*$.  Here, $n_{1}^{*}+n_{2}^{*}=n^*.$ Then, from  Case-I, we have $Z_{n^*:n^*}(n_{1}^{*},n_{2}^{*})\leq_{*}X_{n^*:n^*}(n_{1}^{*},n_{2}^{*})$. Further, star order is scale invariant, and hence we obtain $Y_{n^*:n^*}(n_{1}^{*},n_{2}^{*})\leq_{*}X_{n^*:n^*}(n_{1}^{*},n_{2}^{*}).$ This completes the proof of the theorem.
\end{proof}

Using the fact that the star order implies the Lorenz order, the following result is a direct consequence of Theorem \ref{th}. Further, since the Lorenz order is mainly used to compare the income distributions, the following corollary is more interesting from the point of its applications in the study of incomes.
\begin{corollary}\label{cor.th}
		Under the set-up as in Theorem \ref{th},
	$$\frac{\lambda_{2:2}}{\lambda_{1:2}}\geq\frac{\mu_{2:2}}{\mu_{1:2}}\Rightarrow Y_{n^{*}:n^{*}}(n^{*}_{1},n^{*}_{2})\leq_{Lorenz}X_{n^{*}:n^{*}}(n^{*}_{1},n^{*}_{2}),$$ provided
	$\frac{\psi}{\psi'}$ is decreasing, convex, $\frac{x\tilde{r}'(x)}{\tilde{r}(x)}$ is decreasing and  $x\tilde{r}(x)$ is increasing.
\end{corollary}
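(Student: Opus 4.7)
The plan is to derive this corollary as an immediate consequence of Theorem \ref{th} combined with the implication ``star order $\Rightarrow$ Lorenz order'' that is recalled in Section 2. First I would verify that the hypotheses of the corollary are exactly those of Theorem \ref{th}: the conditions that $\psi/\psi'$ be decreasing and convex, that $x\tilde{r}'(x)/\tilde{r}(x)$ be decreasing, and that $x\tilde{r}(x)$ be increasing, together with the common set-up inherited from Assumption \ref{ass1} with $\tilde r_1=\tilde r_2=\tilde r$ and $\psi_1=\psi_2=\psi$, are imposed verbatim. The hypothesis $\lambda_{2:2}/\lambda_{1:2}\geq \mu_{2:2}/\mu_{1:2}$ therefore triggers Theorem \ref{th} and gives
$$Y_{n^{*}:n^{*}}(n^{*}_{1},n^{*}_{2})\leq_{*}X_{n^{*}:n^{*}}(n^{*}_{1},n^{*}_{2}).$$

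Next I would invoke the classical fact, recalled in Section 2 and attributed to \cite{marshall2007life}, that for any two nonnegative random variables with finite positive means the star order implies the Lorenz order. Applying this implication to the pair of largest order statistics above yields
$$Y_{n^{*}:n^{*}}(n^{*}_{1},n^{*}_{2})\leq_{Lorenz}X_{n^{*}:n^{*}}(n^{*}_{1},n^{*}_{2}),$$
which is the stated conclusion. The only routine checks are that the extreme order statistics arising from the underlying scale components are nonnegative (ensured by the set-up of Assumption \ref{ass1}) and have finite positive means, so that the Lorenz order is well defined. There is no substantive obstacle; the entire content of the corollary lives in Theorem \ref{th}, and this proof is merely the step of passing from $\leq_{*}$ to $\leq_{Lorenz}$.
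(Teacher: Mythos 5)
Your proposal is correct and matches the paper's treatment exactly: the paper presents this corollary as a direct consequence of Theorem \ref{th} together with the fact, recalled in Section 2 and attributed to \cite{marshall2007life}, that the star order implies the Lorenz order. Your additional remark about checking nonnegativity and finiteness of means is a harmless (and slightly more careful) elaboration of the same argument.
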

\subsection{Orderings between the smallest order statistics}
In the previous subsection, we focus on the conditions, under which the largest order statistics are comparable according to various stochastic orders. Here, we develop conditions such that the usual stochastic, hazard rate, star and Lorenz orders hold between the smallest order statistics. In the following theorems, we consider that the samples are heterogeneous and taken from the multiple-outlier dependent scale models. We now consider the following assumption.
\begin{assumption}\label{ass1.}
	Let $X_{1},\cdots,X_{n^{*}}$ be  $n^*$ dependent nonnegative random variables sharing  Archimedean survival copula with generator $\psi_1,$ with $X_{i}\sim F_1(x\lambda_1),$ for $i=1,\cdots,n^*_1$ and  $X_{j}\sim F_2(x\lambda_2),$ for $j={n^*_{1}+1},\cdots,n^*$. Also, 	let $Y_{1},\cdots,Y_{n^{*}}$ be  $n^*$ dependent non-negative random variables sharing Archimedean copula with generator $\psi_2,$ with $Y_{i}\sim F_1(x\mu_1),$ for $i=1,\cdots,n^*_1$ and $Y_{j}\sim F_2(x\mu_2),$ for $j={n^*_{1}+1},\cdots,n^*.$ Here, $n^{*}_{1}+n^{*}_{2}=n^{*}$, $\psi_{1}=\phi^{-1}_{1}$ and $\psi_{2}=\phi^{-1}_{2}$.
\end{assumption}

\begin{theorem}\label{th2}
	Under the set-up as in Assumption \ref{ass1.}, with  $r_1(x)\leq(\geq) r_2(x)$ and $n^{*}_1\leq(\geq)n^{*}_2,$
	 $$(\underbrace{\lambda_{1},\cdots,\lambda_{1}}_{n^{*}_{1}},\underbrace{\lambda_{2},\cdots,\lambda_{2}}_{n^{*}_{2}})\succeq_{w}
	(\underbrace{\mu_{1},\cdots,\mu_{1}}_{n^{*}_{1}},\underbrace{\mu_{2},\cdots,\mu_{2}}_{n^{*}_{2}})\Rightarrow X_{1:n^{*}}(n^{*}_{1},n^{*}_{2})\leq_{st}Y_{1:n^{*}}(n^{*}_1,n^{*}_2),$$ provided $\boldsymbol{ \lambda},~\boldsymbol{\mu}\in\mathcal{E_+}~(\mathcal{D_+}),$ $\phi_2\circ\psi_1$ is super-additive, $\psi_1\text{ or }\psi_2$ is log-convex and $r_1(x)\text{ or }r_2(x)$ is increasing.
\end{theorem}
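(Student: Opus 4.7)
The plan is to parallel the proof of Theorem~\ref{th1}, with the joint distribution function replaced by the joint survival function and the reversed hazard rates $\tilde r_i$ replaced by the hazard rates $r_i$. Under the survival Archimedean copula structure, the survival functions of the two smallest order statistics read
\begin{equation*}
\bar F_{X_{1:n^*}(n^*_1,n^*_2)}(x)=\psi_1\bigl[n^*_1\phi_1(\bar F_1(x\lambda_1))+n^*_2\phi_1(\bar F_2(x\lambda_2))\bigr],
\end{equation*}
and analogously with $\psi_2,\boldsymbol\mu$ for $Y_{1:n^*}$. Denote these by $A(\boldsymbol\lambda,\psi_1,x)$ and $B(\boldsymbol\mu,\psi_2,x)$. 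Super-additivity of $\phi_2\circ\psi_1$ combined with Lemma~\ref{lem3.1} immediately yields $A(\boldsymbol\mu,\psi_1,x)\le B(\boldsymbol\mu,\psi_2,x)$, so it suffices to show that $A(\boldsymbol\lambda,\psi_1,x)\le A(\boldsymbol\mu,\psi_1,x)$ whenever $\boldsymbol\lambda\succeq_w\boldsymbol\mu$. By Theorem~A.8 of \cite{Marshall2011}, this reduces to proving that $A(\cdot,\psi_1,x)$ is decreasing in each coordinate and Schur-concave.

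Differentiating gives
\begin{equation*}
\frac{\partial A(\boldsymbol\lambda,\psi_1,x)}{\partial\lambda_k}=-x\,n^*_k\,r_k(x\lambda_k)\,\frac{\psi_1\bigl[\phi_1(\bar F_k(x\lambda_k))\bigr]}{\psi_1'\bigl[\phi_1(\bar F_k(x\lambda_k))\bigr]}\,\psi_1'\bigl[n^*_1\phi_1(\bar F_1(x\lambda_1))+n^*_2\phi_1(\bar F_2(x\lambda_2))\bigr].
\end{equation*}
Because $\psi_1\ge0$ and $\psi_1'\le0$, the four factors combine to a negative quantity, giving the decreasing property in each $\lambda_k$. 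For Schur-concavity, Lemma~\ref{lem2.1b} (Lemma~\ref{lem2.1a}) reduces the task on $\mathcal E_+$ ($\mathcal D_+$) to verifying $\partial A/\partial\lambda_i - \partial A/\partial\lambda_j \ge (\le) 0$ for $1\le i\le j\le n^*$. Cases with $i,j$ in the same block are trivially zero; the substantive work is Case III, where $\lambda_i=\lambda_1$ and $\lambda_j=\lambda_2$.

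For Case III on $\mathcal E_+$ ($\lambda_1\le\lambda_2$, $r_1\le r_2$, $n^*_1\le n^*_2$), I would first observe that $r_1\le r_2$ forces $\bar F_1\ge\bar F_2$ pointwise, whence $\phi_1(\bar F_1(x\lambda_1))\le\phi_1(\bar F_2(x\lambda_2))$ by monotonicity of $\bar F_1$ and $\phi_1$. Log-convexity of $\psi_1$ renders $-\psi_1/\psi_1'$ increasing, so
\begin{equation*}
-\frac{\psi_1(w)}{\psi_1'(w)}\bigg|_{w=\phi_1[\bar F_1(x\lambda_1)]}\le -\frac{\psi_1(w)}{\psi_1'(w)}\bigg|_{w=\phi_1[\bar F_2(x\lambda_2)]}.
\end{equation*}
Monotonicity of $r_1$ (or $r_2$) together with $r_1\le r_2$ yields $r_1(x\lambda_1)\le r_2(x\lambda_2)$, and $n^*_1\le n^*_2$ upgrades this to $n^*_1 r_1(x\lambda_1)\le n^*_2 r_2(x\lambda_2)$. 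Plugging these two ordered inequalities into the expression for $\partial A/\partial\lambda_i-\partial A/\partial\lambda_j$, with the common positive factor $-x\psi_1'[\cdot]\ge0$, delivers the required sign. The $\mathcal D_+$ case is entirely symmetric.

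The main obstacle is the careful sign bookkeeping in Case III: each of $\psi_1'$, $\psi_1/\psi_1'$, and the differences of partials carries a negative sign, and the ordering of every factor must line up correctly. The log-convexity hypothesis is precisely what converts the $\phi_1$-ordering into the correct monotonicity of $|\psi_1/\psi_1'|$; together with monotonicity of one of $r_1,r_2$, the dominance $r_1\le r_2$, and the size condition $n^*_1\le n^*_2$, all sign contributions align. Once Schur-concavity and the decreasing property of $A$ are established, chaining $A(\boldsymbol\lambda,\psi_1,x)\le A(\boldsymbol\mu,\psi_1,x)\le B(\boldsymbol\mu,\psi_2,x)$ gives $\bar F_{X_{1:n^*}}(x)\le\bar F_{Y_{1:n^*}}(x)$, i.e., $X_{1:n^*}\le_{st}Y_{1:n^*}$, as required.
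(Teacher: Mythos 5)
Your proposal is correct and follows essentially the same route as the paper's own proof: express the survival functions through the Archimedean survival copula, dispose of the copula change via super-additivity and Lemma~\ref{lem3.1}, and then establish that the common survival function is decreasing and Schur-concave in $\boldsymbol\lambda$ via the three-case analysis, with Case III handled by exactly the same two ordered inequalities ($n^*_1 r_1(x\lambda_1)\le n^*_2 r_2(x\lambda_2)$ and the monotonicity of $-\psi_1/\psi_1'$ from log-convexity). The only cosmetic difference is that you explicitly justify $\bar F_1\ge\bar F_2$ from $r_1\le r_2$, a step the paper leaves implicit.
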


\begin{proof}
	The reliability functions of $X_{1:n^{*}}(n^{*}_1,n^{*}_2)$ and $Y_{1:n^{*}}(n^{*}_1,n^{*}_2)$ are respectively given by
	$$\bar F_{X_{1:n^{*}}}(n^{*}_1,n^{*}_2)(x)=\psi_1\left[n^{*}_1\phi_1\left(\bar F_1\left(x\lambda_1\right)\right)+n^{*}_2\phi_1\left(\bar F_2\left(x\lambda_2\right)\right) \right]$$ and
	$$\bar F_{Y_{1:n^{*}}}(n^{*}_1,n^{*}_2)(x)=\psi_2\left[n^{*}_1\phi_2\left(\bar F_1\left(x\mu_1\right)\right)+n^{*}_2\phi_2\left(\bar F_2\left(x\mu_2\right)\right) \right].$$
	Let us denote $C(\boldsymbol{ \lambda},\psi_1,x)=\bar{F}_{X_{1:n^{*}}}(n^{*}_1,n^{*}_2)(x)$ and $D(\boldsymbol{ \mu},\psi_2,x)=\bar{F}_{Y_{1:n^{*}}}(n^{*}_1,n^{*}_2)(x).$ According to Lemma \ref{lem3.1}, super-additivity property of $\phi_2\circ\psi_1$ provides
	$C(\boldsymbol{ \mu},\psi_1,x)\leq D(\boldsymbol{ \mu},\psi_2,x).$ In order to prove the desired result, we need to show that
	$C(\boldsymbol{ \lambda},\psi_1,x)\leq C(\boldsymbol{ \mu},\psi_1,x).$ This is equivalent to show that $C(\boldsymbol{ \lambda},\psi_1,x)$ is decreasing and Schur-concave with respect to $\boldsymbol{ \lambda}$.
	Taking derivative of $C(\boldsymbol{ \lambda},\psi_1,x)$ with respect to $\lambda_1,$ we get
	\begin{equation}\label{eq4}
	\frac{\partial C(\boldsymbol{ \lambda},\psi_1,x)}{\partial\lambda_1}={-n^{*}_1xr_1(x\lambda_{1})}\frac{{\psi_{1}}\left[\phi_{1}\left(\bar{F_1}\left(x\lambda_1\right)\right)\right]}{{\psi}'_{1}\left[\phi_{1}\left(\bar{F_1}\left(x\lambda_1\right)\right)\right]}{\psi}'_{1}\left[n^{*}_1\phi_{1}\left(\bar{F_1}\left(x\lambda_1\right)\right)+n^{*}_2\phi_{1}\left(\bar{F_2}\left(x\lambda_2\right)\right)\right].
	\end{equation}
	Equation \eqref{eq4} shows that $C(\boldsymbol{ \lambda},\psi_1,x)$ is decreasing  in $\lambda_1,$ since $\frac{\partial C(\boldsymbol{ \lambda},\psi_1,x)}{\partial\lambda_1}\leq 0.$ Also,  $\frac{\partial C(\boldsymbol{ \lambda},\psi_1,x)}{\partial\lambda_2}\leq 0$. Therefore, $C(\boldsymbol{ \lambda},\psi_1,x)$ is decreasing in $\lambda_i,$ for $i=1,2.$ Further, to establish Schur-concavity of $C(\boldsymbol{ \lambda},\psi_1,x)$, we need to show that for $1\leq i\leq j\leq n^{*},$ the following inequality holds (see Lemma \ref{lem2.1b}~( Lemma \ref{lem2.1a})):
	\begin{equation}\label{eq3.4}
		\left[\frac{\partial C(\boldsymbol{ \lambda},\psi_1,x)}{\partial\lambda_i}-\frac{\partial C(\boldsymbol{ \lambda},\psi_1,x)}{\partial\lambda_j}\right]\geq(\leq) 0\text{, for }\boldsymbol{ \lambda}\in\mathcal{E}_+~(\mathcal{D}_+).
	\end{equation}
Now, we study the following cases.\\
	Case I: For $1\leq i\leq j\leq n^{*}_1 ,$  $\lambda_i=\lambda_j=\lambda_1.$ Thus, $\left[\frac{\partial C(\boldsymbol{ \lambda},\psi_1,x)}{\partial\lambda_i}-\frac{\partial C(\boldsymbol{ \lambda},\psi_1,x)}{\partial\lambda_j}\right]= 0.$\\
	Case II: For $n^{*}_{1}+1\leq i\leq j\leq n^{*} ,$  $\lambda_i=\lambda_j=\lambda_2.$ So,  $\left[\frac{\partial C(\boldsymbol{ \lambda},\psi_1,x)}{\partial\lambda_i}-\frac{\partial C(\boldsymbol{ \lambda},\psi_1,x)}{\partial\lambda_j}\right]= 0.$\\
	Case III: For $1\leq i\leq n^{*}_1 $ and $n^{*}_{1}+1\leq j\leq n^{*},$  $\lambda_i=\lambda_1$ and $\lambda_j=\lambda_2$.
	Suppose $\lambda_{1}\leq(\geq )\lambda_{2}$ which implies $\phi_{1}(\bar{F}_{1}(x\lambda_1))\leq(\geq)\phi_{1}(\bar{F}_{2}(x\lambda_2))$ in view of $F_1\leq(\geq) F_2.$  Now, applying the convexity property of $\ln\psi_{1}$,  we can write
	\begin{equation}\label{eq5}
	\frac{\psi_{1}(w)}{\psi'_{1}(w)}\Big|_{w=\phi_{1}[\bar{F_1}(x\lambda_{1})]}\geq(\leq) \frac{\psi_{1}(w)}{\psi'_{1}(w)}\Big|_{w=\phi_{1}[\bar{F_2}(x\lambda_{2})]}.
	\end{equation}
	Under the assumptions made, further, we obtain
	\begin{equation}\label{eq6}
	n^{*}_1r_{1}(x\lambda_{1})\leq(\geq)n^{*}_2 r_{2}(x\lambda_{2}).
	\end{equation}
	Finally, combining Equations \eqref{eq5} and \eqref{eq6}, we observe that the inequality in \eqref{eq3.4} holds. This completes the proof.	
\end{proof}

The following corollary is immediate from Theorem \ref{th2}.
\begin{corollary}\label{cor11}
	Let the set-up as in Assumption \ref{ass1.} hold with $\psi_1=\psi_2=\psi$ and $n^{*}_1\leq(\geq)n^{*}_2.$ Also, let $\psi$  be log-convex and $\boldsymbol{ \lambda},~\boldsymbol{\mu}\in\mathcal{E_+}~(\mathcal{D_+})$. Then,
	\begin{itemize}
			\item[(i)]
		 $(\underbrace{\lambda_{1},\cdots,\lambda_{1}}_{n^{*}_{1}},\underbrace{\lambda_{2},\cdots,\lambda_{2}}_{n^{*}_{2}})\succeq_{w}
		 (\underbrace{\mu_{1},\cdots,\mu_{1}}_{n^{*}_{1}},\underbrace{\mu_{2},\cdots,\mu_{2}}_{n^{*}_{2}})\Rightarrow X_{1:n^{*}}(n^{*}_{1},n^{*}_{2})\leq_{st}Y_{1:n^{*}}(n^{*}_1,n^{*}_2),$ provided
		  $r_1(x)\text{ or }r_2(x)$ is increasing with  $r_1(x)\leq(\geq) r_2(x)$.
		\item[(ii)]
		 $(\underbrace{\lambda_{1},\cdots,\lambda_{1}}_{n^{*}_{1}},\underbrace{\lambda_{2},\cdots,\lambda_{2}}_{n^{*}_{2}})\succeq_{w}
		 (\underbrace{\mu_{1},\cdots,\mu_{1}}_{n^{*}_{1}},\underbrace{\mu_{2},\cdots,\mu_{2}}_{n^{*}_{2}})\Rightarrow X_{1:n^{*}}(n^{*}_{1},n^{*}_{2})\leq_{st}Y_{1:n^{*}}(n^{*}_1,n^{*}_2),$ provided  $r(x)$ is increasing, where $r_1(x)=r_2(x)=r(x)$.
	\end{itemize}
\end{corollary}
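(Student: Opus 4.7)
The plan is to obtain Corollary \ref{cor11} as a direct specialization of Theorem \ref{th2}. Under the additional hypothesis $\psi_1=\psi_2=\psi$ (so that $\phi_1=\phi_2=\phi$), the generator-level hypotheses in Theorem \ref{th2} collapse to either trivial statements or to the single hypothesis stated in the corollary, and part (ii) will follow as a further simplification of part (i).

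First, I would check that the ``$\phi_2\circ\psi_1$ is super-additive'' assumption of Theorem \ref{th2} is automatic. With $\psi_1=\psi_2=\psi$ we have $\phi_2\circ\psi_1=\phi\circ\psi=\mathrm{id}$, and the identity map satisfies $f(x)+f(y)=x+y=f(x+y)$, i.e.\ super-additivity with equality. Next, the disjunctive condition ``$\psi_1$ or $\psi_2$ is log-convex'' becomes ``$\psi$ is log-convex,'' which is precisely what is assumed in the corollary. The remaining hypotheses of Theorem \ref{th2} — the basic set-up of Assumption \ref{ass1.}, the sign relation $r_1(x)\leq(\geq) r_2(x)$, the cardinality relation $n^{*}_1\leq(\geq)n^{*}_2$, the weak submajorization between the two scale vectors, the chamber condition $\boldsymbol{\lambda},\boldsymbol{\mu}\in\mathcal{E}_+~(\mathcal{D}_+)$, and the monotonicity ``$r_1$ or $r_2$ is increasing'' — are imposed verbatim in part (i). Invoking Theorem \ref{th2} then yields $X_{1:n^{*}}(n^{*}_{1},n^{*}_{2})\leq_{st}Y_{1:n^{*}}(n^{*}_1,n^{*}_2)$, which proves (i).

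For part (ii), the extra assumption $r_1(x)=r_2(x)=r(x)$ makes $r_1(x)\leq(\geq) r_2(x)$ hold trivially as an equality, and reduces the disjunction ``$r_1$ or $r_2$ is increasing'' to ``$r$ is increasing,'' which is exactly the hypothesis of (ii). Thus (ii) is an immediate consequence of (i) (equivalently, of Theorem \ref{th2} under the same checks). The argument involves no genuine calculation; the only mild subtlety to flag is the observation that $\phi\circ\psi$ being the identity means super-additivity holds with equality, so nothing needs to be verified beyond the substitution $\psi_1=\psi_2=\psi$.
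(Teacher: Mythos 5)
Your proposal is correct and matches the paper's treatment: the paper states the corollary is immediate from Theorem \ref{th2}, and your specialization (noting that $\phi\circ\psi=\mathrm{id}$ is trivially super-additive and that the remaining hypotheses carry over verbatim, with (ii) as the further reduction $r_1=r_2=r$) is exactly the intended argument.
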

The next result reveals that the smallest order statistics $X_{1:n^{*}}(n^{*}_{1},n^{*}_{2})$ dominates $X_{1:n}(n_{1},n_{2})$ in the sense of the usual stochastic order under the condition that $(n^{*}_1,n^{*}_2)$ is weakly submajorized by $(n_{1},n_{2}).$ The following assumption will be helpful to prove the next two results.
\begin{assumption}\label{ass3.}
	Let $X_{1},\cdots,X_{n^{*}}$ be $n^*$ dependent nonnegative random variables sharing Archimedean survival copula with generator $\psi_1,$ such that  $X_{i}\sim F_1(x\lambda_1)$,  for $i=1,\cdots,n^{*}_1$ and $X_{j}\sim F_{2}(x\lambda_{2}),$ for $j={n^{*}_{1}+1},\cdots,n^*.$ We assume that there exist two natural numbers $n_{1}$ and $n_{2}$ such that $1\leq n_{1}\leq n^{*}_{1}\leq n^{*}_{2}\leq n_{2}.$ Also,  $n=n_1+n_2,~n^*=n^*_1+n^*_2$ and $\psi_{1}=\phi^{-1}_{1}$.
\end{assumption}
\begin{theorem}\label{th5}
	Let Assumption \ref{ass3.} hold. Then, for  $\boldsymbol{\lambda}=(\lambda_1,\lambda_2)\in\mathcal{E_+}$ and $F_1\leq F_2,$ we have
	$$(n_{1},n_{2})\succeq_{w}(n^{*}_{1},n^{*}_{2})\Rightarrow X_{1:n}(n_{1},n_{2})\leq_{st}X_{1:n^{*}}(n^{*}_{1},n^{*}_{2}).$$
\end{theorem}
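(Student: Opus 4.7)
The plan is to argue in direct parallel with Theorem \ref{th3}, but working with survival functions and the Archimedean survival copula. First I would write
\begin{align*}
\bar F_{X_{1:n}}(n_1,n_2)(x) &= \psi_1\!\left[n_1 \phi_1(\bar F_1(x\lambda_1)) + n_2 \phi_1(\bar F_2(x\lambda_2))\right],\\
\bar F_{X_{1:n^*}}(n^*_1,n^*_2)(x) &= \psi_1\!\left[n^*_1 \phi_1(\bar F_1(x\lambda_1)) + n^*_2 \phi_1(\bar F_2(x\lambda_2))\right],
\end{align*}
using that both samples share the common generator $\psi_1$ and the same scale vector $(\lambda_1,\lambda_2)$ (Assumption \ref{ass3.}). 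Since $\psi_1$ is nonincreasing, the target relation $\bar F_{X_{1:n}}(x)\leq \bar F_{X_{1:n^*}}(x)$ reduces to the scalar inequality
$$(n^*_1-n_1)\,\phi_1(\bar F_1(x\lambda_1))\ \leq\ (n_2-n^*_2)\,\phi_1(\bar F_2(x\lambda_2)).$$

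Next I would verify this scalar inequality by separating the combinatorial and the analytic ingredients. From $(n_1,n_2)\succeq_w (n^*_1,n^*_2)$ together with $n_1\leq n^*_1$ in Assumption \ref{ass3.} one has $n_1+n_2\geq n^*_1+n^*_2$, hence $n_2-n^*_2\geq n^*_1-n_1\geq 0$, exactly as in Theorem \ref{th3}. On the analytic side, $\boldsymbol{\lambda}\in\mathcal{E}_+$ gives $\lambda_1\leq \lambda_2$, so the monotonicity of $F_1$ combined with the hypothesis $F_1\leq F_2$ yields $\bar F_1(x\lambda_1)\geq \bar F_1(x\lambda_2)\geq \bar F_2(x\lambda_2)$. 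Because $\phi_1=\psi_1^{-1}$ is nonincreasing on $[0,1]$ with $\phi_1(1)=0$, this produces $0\leq \phi_1(\bar F_1(x\lambda_1))\leq \phi_1(\bar F_2(x\lambda_2))$. Multiplying the two ordered nonnegative pairs term by term delivers the required inequality.

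I do not anticipate any genuine obstacle; the argument is structurally identical to that of Theorem \ref{th3}, with distribution functions systematically replaced by survival functions. The only bookkeeping point to get right is the direction switch: the working region is $\mathcal{E}_+$ rather than $\mathcal{D}_+$ precisely so that $\lambda_1\leq\lambda_2$ pushes the survival-function comparison in the same direction as $\bar F_1\geq\bar F_2$, which is what is needed to ensure $\phi_1(\bar F_1(x\lambda_1))$ is the smaller of the two weighted quantities.
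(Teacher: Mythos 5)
Your proposal is correct and is essentially identical to the paper's own proof: both reduce the claim, via the monotonicity of $\psi_1$, to the scalar inequality $(n^{*}_{1}-n_{1})\phi_{1}(\bar{F}_{1}(x\lambda_1))\leq(n_{2}-n^{*}_{2})\phi_{1}(\bar{F}_{2}(x\lambda_2))$ and then verify it by combining $(n_{2}-n^{*}_{2})\geq(n^{*}_{1}-n_{1})\geq 0$ from the weak submajorization with $0\leq\phi_{1}(\bar{F}_{1}(x\lambda_1))\leq\phi_{1}(\bar{F}_{2}(x\lambda_2))$ from $\lambda_1\leq\lambda_2$ and $F_1\leq F_2$. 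Your write-up is in fact slightly more explicit than the paper's in spelling out where the hypothesis $F_1\leq F_2$ enters.
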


\begin{proof}
	To obtain the desired result, it is sufficient  to show that $\psi_{1}[\sum_{i=1}^{n}\phi_{1}\left(\bar{F_{i}}\left(x\lambda_i\right)\right)]\leq \psi_{1}[\sum_{i=1}^{n^{*}}\phi_{1}\left(\bar{F_{i}}\left(x\lambda_i\right)\right)]$.  Equivalently,
	\begin{eqnarray}\label{eq13}
	 (n^{*}_{1}-n_{1})\phi_{1}\left(\bar{F_{1}}\left(x\lambda_1\right)\right)\leq(n_{2}-n^{*}_{2})\phi_{1}\left(\bar{F_{2}}\left(x\lambda_2\right)\right).
	\end{eqnarray}
	Further, $(n_{1},n_{2})\succeq_{w}(n^{*}_{1},n^{*}_{2})\Rightarrow(n_{1}+n_{2})\geq(n^{*}_{1}+n^{*}_{2})\Rightarrow(n_{2}-n^{*}_{2})\geq(n^{*}_{1}-n_{1})\geq 0$ and $\lambda_{1}\leq \lambda_2\Rightarrow\phi_{1}\left(\bar{F_{2}}\left(x\lambda_2\right)\right)\geq\phi_{1}\left(\bar{F_{1}}\left(x\lambda_1\right)\right)\geq0.$
	Using these arguments, we get the inequality given by \eqref{eq13}. Hence, the proof is completed.
\end{proof}
In Theorem \ref{th5}, if we take the same baseline distribution functions, then we have the following corollary.
\begin{corollary}\label{cor12}
	Let Assumption \ref{ass3.} hold. Then, for  $\boldsymbol{\lambda}\in\mathcal{E_+},$ $F_1=F_2,$ we have
	$$(n_{1},n_{2})\succeq_{w}(n^{*}_{1},n^{*}_{2})\Rightarrow X_{1:n}(n_{1},n_{2})\leq_{st}X_{1:n^{*}}(n^{*}_{1},n^{*}_{2}).$$
\end{corollary}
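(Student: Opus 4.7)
The plan is to observe that Corollary \ref{cor12} is an immediate specialization of Theorem \ref{th5}. Under the hypothesis $F_1=F_2$, the inequality $F_1\leq F_2$ trivially holds (with equality), so every hypothesis of Theorem \ref{th5} is satisfied: Assumption \ref{ass3.} is inherited verbatim, $\boldsymbol{\lambda}\in\mathcal{E}_+$ is assumed, and $F_1\leq F_2$ is automatic. Invoking Theorem \ref{th5} then yields $X_{1:n}(n_1,n_2)\leq_{st} X_{1:n^*}(n_1^*,n_2^*)$ whenever $(n_1,n_2)\succeq_w (n_1^*,n_2^*)$, which is the desired conclusion.

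If, instead, one wanted to verify the corollary directly without quoting Theorem \ref{th5}, the argument would be essentially the same as in that proof, only simpler. Writing $F=F_1=F_2$, the survival functions of the two smallest order statistics become
\begin{equation*}
\bar F_{X_{1:n}}(n_1,n_2)(x)=\psi_1\!\left[n_1\phi_1(\bar F(x\lambda_1))+n_2\phi_1(\bar F(x\lambda_2))\right]
\end{equation*}
and the analogous expression with $n_1^*, n_2^*$ in place of $n_1,n_2$. Since $\psi_1$ is nonincreasing, the stochastic ordering reduces to checking
\begin{equation*}
(n_1^*-n_1)\phi_1(\bar F(x\lambda_1))\leq (n_2-n_2^*)\phi_1(\bar F(x\lambda_2)).
\end{equation*}
From $(n_1,n_2)\succeq_w (n_1^*,n_2^*)$ one gets $n_2-n_2^*\geq n_1^*-n_1\geq 0$, and from $\lambda_1\leq\lambda_2$ together with $F$ nondecreasing (hence $\bar F$ nonincreasing) one gets $\phi_1(\bar F(x\lambda_2))\geq \phi_1(\bar F(x\lambda_1))\geq 0$; multiplying these yields the required inequality.

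There is no serious obstacle here: the only thing to notice is that the common baseline case is a strict sub-case of the two-baseline monotone-in-distribution case handled in Theorem \ref{th5}, and so the corollary does not require any additional work beyond recognizing this containment.
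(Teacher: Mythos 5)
Your proposal is correct and matches the paper exactly: the paper obtains this corollary precisely by noting it is the special case $F_1=F_2$ of Theorem \ref{th5}, whose hypothesis $F_1\leq F_2$ is then automatic. Your optional direct verification also reproduces the paper's proof of Theorem \ref{th5} in the common-baseline case, so no further comment is needed.
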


In the following, we develop conditions such that the usual stochastic order holds between the smallest order statistics $X_{1:n}(n_{1},n_{2})$ and $Y_{1:n^{*}}(n^{*}_{1},n^{*}_{2})$.
The assumption in below is required for the next theorem.
\begin{assumption}\label{ass4.}
	Let $X_{1},\cdots,X_{n}$ be  $n$ nonnegative dependent random variables sharing Archimedean  survival copula with generator $\psi_1,$ such that  $X_{i}\sim F_1(x\lambda_{1}),$ for $i=1,\cdots,n_1$ and $X_{j}\sim F_2(x\lambda_{2}),$ for $j={n_{1}+1},\cdots,n.$ Also, let $Y_{1},\cdots,Y_{n^*}$ be  $n^*$ dependent nonnegative random variables sharing Archimedean copula with generator $\psi_2,$ such that  $Y_{i}\sim F_1(x\mu_{1}),$ for $i=1,\cdots,n^*_1$ and $Y_{j}\sim F_2(x\mu_{2}),$ for $j={n^*_{1}+1},\cdots,n^*.$  Here, $1\leq n_{1}\leq n^{*}_{1}\leq n^{*}_{2}\leq n_{2}$, $n=n_1+n_2$ and $n^*=n^*_1+n^*_2.$ 
\end{assumption}
\begin{theorem}\label{th6}
	Let Assumption \ref{ass4.} hold, with   $r_{1}(x)\leq r_{2}(x)$. Also, let $\boldsymbol{ \lambda},~\boldsymbol{\mu}\in\mathcal{E_+}.$ Then, for $(n_{1},n_{2})\succeq_{w}(n^{*}_{1},n^{*}_{2})$,
	 $$	 (\underbrace{\lambda_{1},\cdots,\lambda_{1}}_{n^{*}_{1}},\underbrace{\lambda_{2},\cdots,\lambda_{2}}_{n^{*}_{2}})\succeq_{w}(\underbrace{\mu_{1},\cdots,\mu_{1}}_{n^{*}_{1}},\underbrace{\mu_{2},\cdots,\mu_{2}}_{n^{*}_{2}})
\Rightarrow X_{1:n}(n_{1},n_{2})\leq_{st}Y_{1:n^{*}}(n^{*}_{1},n^{*}_{2}),$$ provided $\phi_2\circ\psi_1$ is super-additive,
	$\psi_1\text{ or }\psi_2$ is log-convex  and $r_1(x)\text{ or }r_2(x)$ is increasing.
\end{theorem}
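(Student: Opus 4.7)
The structure of Theorem \ref{th6} is parallel to that of Theorem \ref{th4}: just as Theorem \ref{th4} was obtained by chaining Theorem \ref{th1} (which compared the two samples at fixed outlier counts) with Theorem \ref{th3} (which compared different outlier counts within one sample), the plan here is to chain Theorem \ref{th2} (the smallest-order-statistic analogue of Theorem \ref{th1}) with Theorem \ref{th5} (the smallest-order-statistic analogue of Theorem \ref{th3}) and invoke transitivity of $\leq_{st}$.

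First I would verify that the hypotheses of Theorem \ref{th2} are all present in the statement of Theorem \ref{th6}. Assumption \ref{ass4.} supplies the required Archimedean survival copula structure with generators $\psi_1,\psi_2$ on the common outlier sizes $n^{*}_1,n^{*}_2$, and Assumption \ref{ass4.} also forces $n^{*}_1\leq n^{*}_2$, matching the choice of the first (``$\leq$'') branch in Theorem \ref{th2}. The remaining inputs of Theorem \ref{th2} (the weak submajorization $\boldsymbol{\lambda}\succeq_w\boldsymbol{\mu}$ on the full vectors, $r_1\leq r_2$, $\boldsymbol{\lambda},\boldsymbol{\mu}\in\mathcal{E}_+$, super-additivity of $\phi_2\circ\psi_1$, log-convexity of $\psi_1$ or $\psi_2$, and monotonicity of $r_1$ or $r_2$) are exactly the hypotheses of Theorem \ref{th6}. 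Hence Theorem \ref{th2} gives
\begin{equation}\label{eq:th6-step1}
X_{1:n^{*}}(n^{*}_1,n^{*}_2)\leq_{st} Y_{1:n^{*}}(n^{*}_1,n^{*}_2).
\end{equation}

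Next I would check that Theorem \ref{th5} applies to the $X$-sample. The hypothesis $r_1(x)\leq r_2(x)$ in Theorem \ref{th6} implies $\bar{F}_1(x)\geq \bar{F}_2(x)$ for all $x>0$, equivalently $F_1\leq F_2$, which is exactly the baseline ordering demanded by Theorem \ref{th5}. The conditions $\boldsymbol{\lambda}\in\mathcal{E}_+$ and $(n_1,n_2)\succeq_w(n^{*}_1,n^{*}_2)$ are also given, and the $X$-sample in Assumption \ref{ass4.} has the form required by Assumption \ref{ass3.} with the same generator $\psi_1$. Thus Theorem \ref{th5} yields
\begin{equation}\label{eq:th6-step2}
X_{1:n}(n_1,n_2)\leq_{st} X_{1:n^{*}}(n^{*}_1,n^{*}_2).
\end{equation}

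Finally, combining \eqref{eq:th6-step1} and \eqref{eq:th6-step2} via transitivity of the usual stochastic order delivers $X_{1:n}(n_1,n_2)\leq_{st} Y_{1:n^{*}}(n^{*}_1,n^{*}_2)$, which is the claim. There is no real analytical obstacle; the only bookkeeping point worth flagging is that one must confirm that the ``$\leq$'' branch of Theorem \ref{th2} (rather than the ``$\geq$'' branch) is the one selected, and that the implication $r_1\leq r_2\Rightarrow F_1\leq F_2$ suffices to feed Theorem \ref{th5}. Both are immediate, so the proof reduces to this two-line chaining argument, entirely analogous to the proof of Theorem \ref{th4}.
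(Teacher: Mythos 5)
Your proof is correct and follows exactly the route the paper intends: the paper's own proof of this theorem simply states that it follows from Theorems \ref{th2} and \ref{th5} and omits the details, which are precisely the two stochastic-order inequalities you chain together via transitivity. Your verification of the hypotheses (in particular that Assumption \ref{ass4.} forces $n^{*}_1\leq n^{*}_2$ so the ``$\leq$'' branch of Theorem \ref{th2} applies, and that $r_1\leq r_2$ yields $F_1\leq F_2$ for Theorem \ref{th5}) is a correct filling-in of what the paper leaves implicit.
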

 \begin{proof} The theorem can be proved using  Theorems \ref{th2} and \ref{th5}. Thus, it is omitted.
\end{proof}

As an illustration of Theorem \ref{th6}, we present the following example.
\begin{example}\label{ex3.4}
	Take $\boldsymbol{\lambda}=(2,6),~\boldsymbol{\mu}=(1,3),~(n_1,n_2)=(4,8),~(n^*_1,n^*_2)=(6,7),~\psi_1(x)=e^{-x^{\frac{1}{9}}}$ and $\psi_2(x)=e^{-x^{\frac{1}{10}}},~x>0.$ Also, let $F_1(x)=\left(\frac{x}{a}\right)^{l},~0<x\leq a$ and $F_2(x)=1-e^{-x},~x>0$.
	It can be seen that for $a=400$ and $l=2$ all the conditions of Theorem \ref{th6} are satisfied. Now, we plot the graph of $\bar{F}_{X_{1:12}}(4,8)(x)-\bar{F}_{Y_{1:13}}(6.7)(x)$, given in Figure $3a$. The figure suggests that $X_{1:12}(4,8)\leq_{st}Y_{1:13}(6,7)$ holds.
	\begin{figure}[h]
		\begin{center}
			\subfigure[]{\label{c2}\includegraphics[height=2.41in]{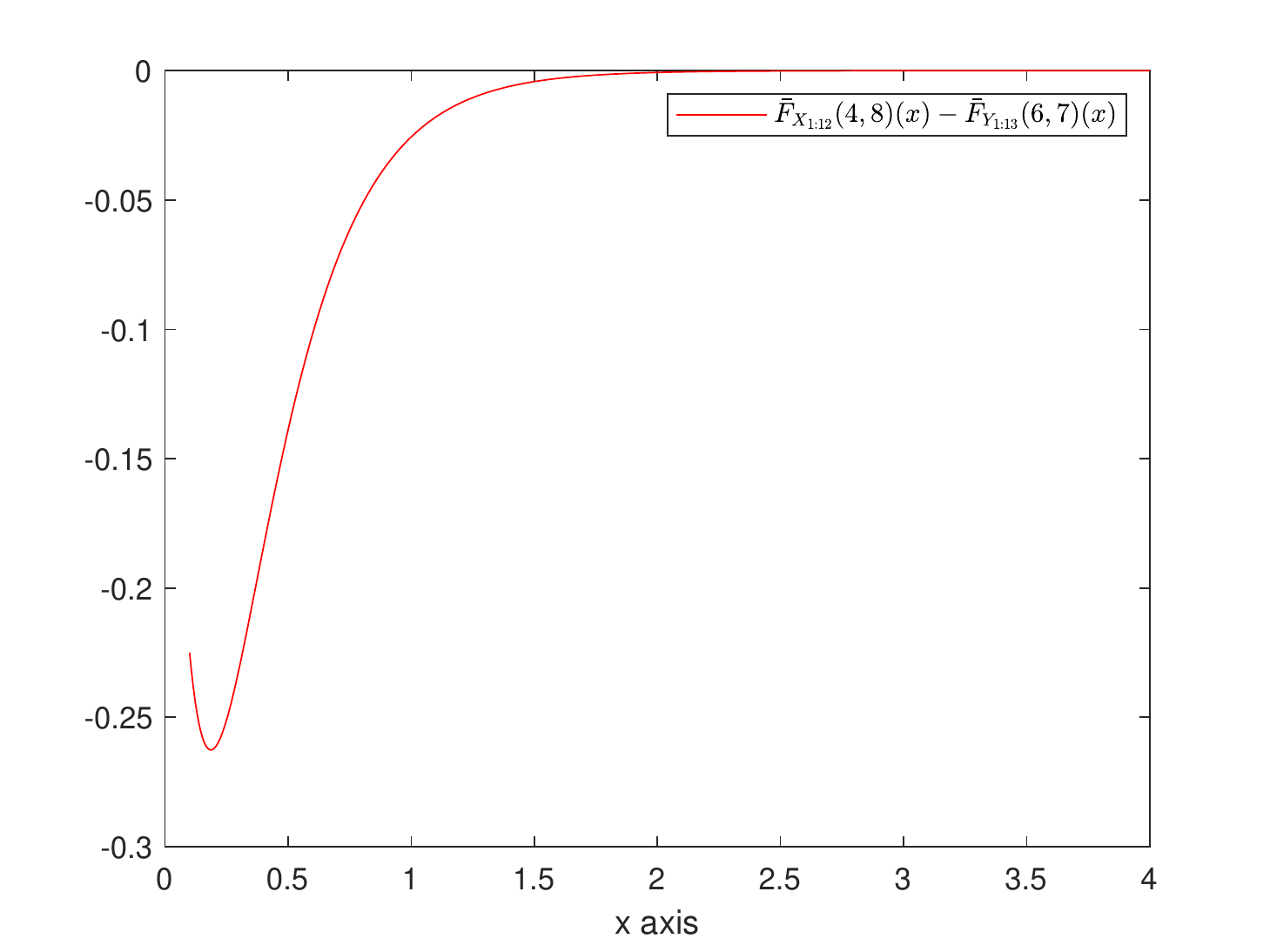}}
			\subfigure[]{\label{c1}\includegraphics[height=2.41in]{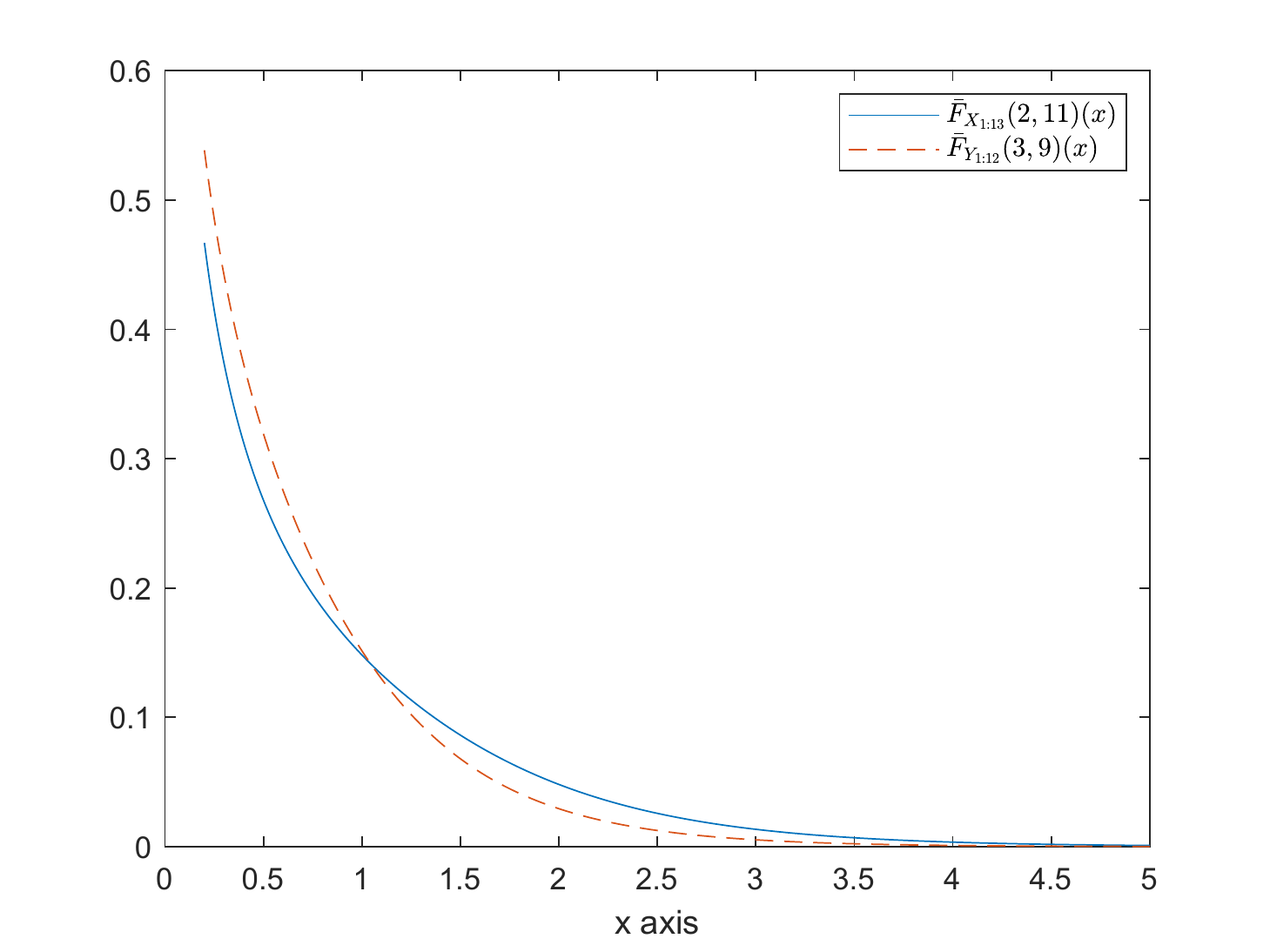}}
			\caption{
				(a) Plot of $\bar{F}_{X_{1:12}}(4,8)(x)-\bar{F}_{Y_{1:13}}(6,7)(x)$ as in Example \ref{ex3.4}. (b) Plots of $\bar{F}_{X_{1:13}}(2,11)(x)$ and $\bar{F}_{Y_{1:12}}(3,9)(x)$ as in Counterexample \ref{ce2}.
			}
		\end{center}
	\end{figure}	
\end{example}
Next, we present a counterexample, which shows that the stated usual stochastic order in Theorem \ref{th6} does not hold if the conditions $r_{1}(x)\leq r_{2}(x)$ and $r_2$ is increasing are dropped out.
\begin{counterexample}\label{ce2}
		Take $\boldsymbol{\lambda}=(1.2,3.6),~\boldsymbol{\mu}=(1.4,3),~(n_1,n_2)=(2,11),~(n^*_1,n^*_2)=(3,9),~\psi_1(x)=e^{-x^{\frac{1}{4.5}}}$ and $\psi_2(x)=e^{-x^{\frac{1}{5}}},~x>0.$ Also, suppose  $F_1(x)=1-e^{-x}$ and $F_2(x)=1-(1+2x)^{-0.5},~x>0.$
	Clearly, all the conditions of Theorem \ref{th6} are satisfied except $r_1\leq r_2$ and $r_2$ is increasing. Now, the graphs of $\bar{F}_{X_{1:13}}(2,11)(x)$ and $\bar{F}_{Y_{1:12}}(3,9)(x)$ are depicted in Figure $3b$. It reveals that the usual stochastic order in Theorem \ref{th6} does not hold.
\end{counterexample}
Upon using Theorem \ref{th6}, one can easily conclude the following corollary.
\begin{corollary}\label{cor13}
	Let Assumption \ref{ass4.} hold with $\psi_{1}=\psi_{2}=\psi$. Also, let $\boldsymbol{ \lambda},~\boldsymbol{\mu}\in\mathcal{E_+}$, 	$\psi$ is log-convex and
	$(n_{1},n_{2})\succeq_{w}(n^{*}_{1},n^{*}_{2})$. Then,
	\begin{itemize}
		\item [(i)]
		 $(\underbrace{\lambda_{1},\cdots,\lambda_{1}}_{n^{*}_{1}},\underbrace{\lambda_{2},\cdots,\lambda_{2}}_{n^{*}_{2}})\succeq_{w}(\underbrace{\mu_{1},\cdots,\mu_{1}}_{n^{*}_{1}},\underbrace{\mu_{2},\cdots,\mu_{2}}_{n^{*}_{2}})
		 \Rightarrow X_{1:n}(n_{1},n_{2})\leq_{st}Y_{1:n^{*}}(n^{*}_{1},n^{*}_{2}),$ provided $r_{1}(x)\text{ or }r_{2}(x)$ is increasing and $r_{1}(x)\leq r_{2}(x)$.
	\item [(ii)]
	 $(\underbrace{\lambda_{1},\cdots,\lambda_{1}}_{n^{*}_{1}},\underbrace{\lambda_{2},\cdots,\lambda_{2}}_{n^{*}_{2}})\succeq_{w}(\underbrace{\mu_{1},\cdots,\mu_{1}}_{n^{*}_{1}},\underbrace{\mu_{2},\cdots,\mu_{2}}_{n^{*}_{2}})
	 \Rightarrow X_{1:n}(n_{1},n_{2})\leq_{st}Y_{1:n^{*}}(n^{*}_{1},n^{*}_{2}),$ provided
	 $r(x)$ is increasing, where $r_1=r_2=r$.
	\end{itemize}
\end{corollary}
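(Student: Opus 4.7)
The plan is to derive Corollary \ref{cor13} as a direct specialization of Theorem \ref{th6}. Both hypotheses of the corollary are engineered so that every condition of Theorem \ref{th6} is either preserved verbatim or becomes automatic, so no new computation is needed.

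First I would observe that the assumption $\psi_1=\psi_2=\psi$ trivially forces $\phi_2\circ\psi_1=\phi\circ\psi=\mathrm{id}$. The identity map is super-additive (indeed additive), so the hypothesis \emph{$\phi_2\circ\psi_1$ is super-additive} of Theorem \ref{th6} is automatically satisfied. Likewise, log-convexity of $\psi$ gives log-convexity of both $\psi_1$ and $\psi_2$, covering the second copula condition. The remaining structural assumptions of Theorem \ref{th6} (namely Assumption \ref{ass4.}, $\boldsymbol{\lambda},\boldsymbol{\mu}\in\mathcal{E}_+$, and $(n_1,n_2)\succeq_w(n^*_1,n^*_2)$) are stated verbatim in the corollary.

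For part (i), the hypotheses "$r_1(x)$ or $r_2(x)$ is increasing" and "$r_1(x)\leq r_2(x)$" match exactly the outstanding hypotheses of Theorem \ref{th6}. Applying that theorem then yields
\[
(\underbrace{\lambda_{1},\cdots,\lambda_{1}}_{n^{*}_{1}},\underbrace{\lambda_{2},\cdots,\lambda_{2}}_{n^{*}_{2}})\succeq_{w}(\underbrace{\mu_{1},\cdots,\mu_{1}}_{n^{*}_{1}},\underbrace{\mu_{2},\cdots,\mu_{2}}_{n^{*}_{2}})\Rightarrow X_{1:n}(n_{1},n_{2})\leq_{st}Y_{1:n^{*}}(n^{*}_{1},n^{*}_{2}),
\]
which is precisely the stated conclusion. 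For part (ii), the common specialization $r_1=r_2=r$ makes $r_1\leq r_2$ hold trivially with equality, and the assumed monotonicity of $r$ gives the monotonicity of both $r_1$ and $r_2$; hence the hypotheses of part (i) (and thus of Theorem \ref{th6}) are met, and the same conclusion follows.

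There is essentially no obstacle: the whole content of the corollary lies in recognizing that the common-generator case $\psi_1=\psi_2=\psi$ eliminates the super-additivity requirement automatically, and that the homogeneous hazard rate case is covered by the heterogeneous statement of part (i). Consequently the proof amounts to citing Theorem \ref{th6} twice.
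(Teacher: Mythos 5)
Your proposal is correct and matches the paper's own (unwritten) argument: the paper simply states that the corollary follows from Theorem \ref{th6}, and your observation that $\psi_1=\psi_2=\psi$ makes $\phi_2\circ\psi_1$ the identity (hence super-additive) and that log-convexity of $\psi$ supplies the remaining copula condition is exactly the intended specialization. Part (ii) as a degenerate case of part (i) is likewise the intended reading.
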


Next, we provide three consecutive theorems, which deal with the hazard rate ordering between the smallest order statistics.
\begin{theorem}\label{th12}
	Let Assumption \ref{ass1.} hold with $n^{*}_1\leq(\geq)n^{*}_2$, $\psi_1=\psi_2=\psi$, $F_1=F_2=F,$  $\tilde{r}_1=\tilde{r}_2=\tilde{r}$ and $r_1=r_2=r.$ Also, suppose $\psi$ is log-concave, $\frac{1-\psi}{\psi'}$ is decreasing, ${[\frac{1-\psi}{\psi'}]'}/{\frac{\psi}{\psi'}}$ is increasing and $\boldsymbol{ \lambda},~\boldsymbol{\mu}\in\mathcal{E_+}~(\mathcal{D_+})$. Then,
	 $$(\underbrace{m_{1},\cdots,m_{1}}_{n^{*}_{1}},\underbrace{m_{2},\cdots,m_{2}}_{n^{*}_{2}})\succeq_{w}(\underbrace{v_{1},\cdots,v_{1}}_{n^{*}_{1}},\underbrace{v_{2},\cdots,v_{2}}_{n^{*}_{2}})
	\Rightarrow X_{1:n^*}(n_{1}^*,n_{2}^*)\leq_{hr}Y_{1:n^*}(n^{*}_{1},n^{*}_{2}),$$
	where $m_{i}=\log\lambda_i$ and $v_{i}=\log\mu_i$, $i=1,~2,$ provided  $r(x)$ is increasing, $x\tilde r(x)$ is increasing and convex.
\end{theorem}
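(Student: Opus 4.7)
The plan is to mirror the Schur-convexity argument of Theorem~\ref{th7}, transposed to the hazard rate and to log-scale parameters. Since $F_1=F_2=F$, $\psi_1=\psi_2=\psi$ and $r_1=r_2=r$, differentiating
\[\bar F_{X_{1:n^*}}(x)=\psi\bigl(n^*_1\phi(\bar F(x\lambda_1))+n^*_2\phi(\bar F(x\lambda_2))\bigr)\]
and using $f=\tilde r\,F=\tilde r(1-\bar F)$ yields
\[r_{X_{1:n^*}}(x)=\frac{\psi'(z)}{\psi(z)}\sum_{i=1}^{2} n^*_i\lambda_i\tilde r(x\lambda_i)\,\frac{1-\psi(\phi(\bar F(x\lambda_i)))}{\psi'(\phi(\bar F(x\lambda_i)))},\]
with $z=n^*_1\phi(\bar F(x\lambda_1))+n^*_2\phi(\bar F(x\lambda_2))$. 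This is the hazard-rate companion of the reversed-hazard expression derived in Theorem~\ref{th7}, related by the swap $(r,F)\leftrightarrow(\tilde r,\bar F)$, which is exactly why the same combination $\eta(W):=(1-\psi(W))/\psi'(W)$ reappears.

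Next, reparameterize via $u_i=\log\lambda_i$ and set $y_i:=xe^{u_i}$, $W_i:=\phi(\bar F(y_i))$, $A(u):=y\tilde r(y)\,\eta(W)$, so that
\[r_{X_{1:n^*}}(x)=H(\boldsymbol{u},x):=\frac{1}{x}\frac{\psi'(z)}{\psi(z)}\sum_{i=1}^{n^*}A(u_i),\qquad z=\sum_{i=1}^{n^*}W(u_i).\]
Viewing $H$ as a function on $\mathbb{R}^{n^*}$ evaluated at the multiplicitous vector with $n^*_1$ copies of $u_1$ and $n^*_2$ copies of $u_2$, by Theorem~A.8 of Marshall and Olkin it suffices to show $H(\,\cdot\,,x)$ is increasing in each coordinate and Schur-convex in $\boldsymbol{u}$ on $\mathcal{E}_+$ (respectively on $\mathcal{D}_+$ with the ordering $n^*_1\geq n^*_2$); the desired conclusion $X_{1:n^*}\leq_{hr}Y_{1:n^*}$ then follows from $\boldsymbol{m}\succeq_w\boldsymbol{v}$.

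Using $\partial W_k/\partial u_k=-A(u_k)$, a direct computation gives
\[\frac{\partial H}{\partial u_k}=\frac{1}{x}\Bigl[-A(u_k)\Bigl(\tfrac{\psi'}{\psi}\Bigr)'(z)\sum_i A(u_i)+\Bigl(\tfrac{\psi'}{\psi}\Bigr)(z)\,A'(u_k)\Bigr].\]
Log-concavity of $\psi$ makes $(\psi'/\psi)'(z)\leq 0$; combined with $A<0$ and $\sum_i A(u_i)<0$, this renders the first term nonnegative. A short calculation yields $A'(u)=\eta(W)\bigl[yG'(y)-G(y)^2\eta'(W)\bigr]$ with $G(y):=y\tilde r(y)$; the hypotheses $x\tilde r(x)$ increasing, $\eta'\leq 0$ and $\eta<0$ force $A'(u)\leq 0$, so the second term is also nonnegative and $H$ is coordinatewise increasing. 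Schur-convexity is verified via Lemma~\ref{lem2.1b} (on $\mathcal{E}_+$) or Lemma~\ref{lem2.1a} (on $\mathcal{D}_+$): the cases in which both coordinates belong to the same marginal are trivial, and everything reduces to the mixed case $u_i=u_1$, $u_j=u_2$, where one must sign
\[\frac{\partial H}{\partial u_j}-\frac{\partial H}{\partial u_i}=\frac{1}{x}\Bigl\{-\Bigl(\tfrac{\psi'}{\psi}\Bigr)'(z)[A(u_2)-A(u_1)]\,S+\Bigl(\tfrac{\psi'}{\psi}\Bigr)(z)[A'(u_2)-A'(u_1)]\Bigr\},\]
with $S:=n^*_1 A(u_1)+n^*_2 A(u_2)<0$. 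On $\mathcal{E}_+$, $u_1\leq u_2$ gives $y_1\leq y_2$ and $W_1\leq W_2$; since $y\tilde r(y)$ is increasing and $|\eta|$ is increasing in $W$, one has $A(u_2)-A(u_1)\leq 0$, making the first bracket $\geq 0$. For the cross bracket, convexity of $y\tilde r(y)$ controls the $yG'(y)\eta(W)$ piece of $A'(u)$; the residual piece involves $G(y)^2\eta'(W)\eta(W)$, and it is precisely here that the hypothesis $\eta'/\tau$ increasing (with $\tau:=\psi/\psi'$), together with $r$ increasing and the ordering $n^*_1\leq n^*_2$, is invoked to ensure $A'(u_2)-A'(u_1)\leq 0$; combined with $(\psi'/\psi)(z)<0$, the second bracket is $\geq 0$. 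The $\mathcal{D}_+$ case is symmetric after reversing the inequalities.

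The main obstacle will be the delicate bookkeeping of signs in the mixed case, where three $\psi$-hypotheses (log-concavity, $\eta$ decreasing, $\eta'/\tau$ increasing) must be coordinated with the marginal conditions on $r$ and $x\tilde r(x)$. The refinement $\eta'/\tau$ increasing is calibrated specifically to the log-scale reparameterization: compared to the $\eta\cdot\eta'$ increasing condition of Theorem~\ref{th7}, the change of variable $u=\log\lambda$ introduces an extra factor of $y=xe^u$ inside the cross term of $A'(u)$, so a different balance between the copula derivatives and the marginal factors is required to close the sign analysis.
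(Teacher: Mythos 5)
Your proposal takes essentially the same route as the paper's proof: the identical representation of the hazard rate of $X_{1:n^*}$ through $\psi'(z)/\psi(z)$ and the combination $(1-\psi)/\psi'$ as in \eqref{eq37}, the reparameterization $m_i=\log\lambda_i$, and the reduction to coordinatewise monotonicity plus Schur-convexity via Theorem A.8 of Marshall--Olkin and Lemmas \ref{lem2.1a}--\ref{lem2.1b} with the same three-case analysis, your derivative formula being \eqref{eq38} up to regrouping the cross term. Like the paper itself --- which stops after displaying \eqref{eq38} and defers the remaining sign analysis to ``similar arguments'' --- you assert rather than verify that the hypothesis on $\bigl[\tfrac{1-\psi}{\psi'}\bigr]'/\tfrac{\psi}{\psi'}$ closes the mixed-case comparison, so the two arguments are at the same level of completeness.
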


\begin{proof} Denote by $f$ the probability density function corresponding to the distribution function $F$. 
	The hazard rate function of $X_{1:n^{*}}(n^{*}_1,n^{*}_2)$ is given by
	\begin{eqnarray}\label{eq37}
	 r_{X_{1:n^{*}}}(n^{*}_1,n^{*}_2)(x)&\overset{def}{=}&\mathcal{E}(\boldsymbol{m})=\frac{\psi'\left[z\right]}{\psi\left[z\right]}\left[\frac{n^{*}_1e^{m_1}f(xe^{m_1})}{\psi'[\phi\left(\bar{F}\left(xe^{m_1}\right)\right)]}+\frac{n^{*}_2e^{m_2}f(xe^{m_2})}{\psi'[\phi\left(\bar{F}\left(xe^{m_2}\right)\right)]}\right]\nonumber\\
	& =&\frac{\psi'\left[z\right]}{\psi\left[z\right]}\Bigg[\frac{n^{*}_{1}e^{m_1}\tilde{r}(xe^{m_1})[1-\psi[\phi\left(\bar{F}\left(xe^{m_1}\right)\right)]]}{\psi'[\phi\left(\bar{F}\left(xe^{m_1}\right)\right)]}\nonumber\\
	&~&+\frac{n^{*}_{2}e^{m_2}\tilde{r}(xe^{m_2})[1-\psi[\phi\left(\bar{F}\left(xe^{m_2}\right)\right)]]}{\psi'[\phi\left(\bar{F}\left(xe^{m_2}\right)\right)]}\Bigg],
	\end{eqnarray}
	where ${z=n^{*}_{1}\phi\left(\bar{F}\left(xe^{m_1}\right)\right)}+n^{*}_{2}\phi\left(\bar{F}\left(xe^{m_2}\right)\right)$, $m_i=\log\lambda_i,$ for $i=1,2$ and $\boldsymbol{m}=(m_1,m_2).$ Also, $f$ is the probability density function of $F.$
	The partial derivative of $\mathcal{E}(\boldsymbol{m})$ with respect to $m_1$ is given by
	\begin{eqnarray}\label{eq38}
	\frac{\partial \mathcal{E}(\boldsymbol{m})}{\partial m_1}&=&-{n^{*}_{1}xe^{m_1}\tilde r(xe^{m_1})}\frac{d}{dz}\left[\frac{\psi'(z)}{\psi(z)}\right]\left[\frac{1-{\psi}\left[\phi\left[\bar{F}\left(xe^{m_1}\right)\right]\right]}{\psi'[\phi\left(\bar{F}\left(xe^{m_1}\right)\right)]}\right]\left[\sum_{i=1}^{n^{*}}\frac{e^{m_i}f\left(xe^{m_i}\right)}{\psi'[\phi\left(\bar{F}\left(xe^{m_i}\right)\right)]}\right]
	\nonumber\\
	&~&-n^{*}_{1}r(xe^{m_1})\left[x[e^{m_1}]^2\tilde {r}(xe^{m_1})\right]\frac{\psi'(z)}{\psi(z)}\left[[\frac{{\psi}(v)}{{\psi}'(v)}]^2\left[\frac{\frac{d}{dv}[\frac{1-{\psi}(v)}{{\psi}'(v)}]}{\frac{{\psi}(v)}{{\psi}'(v)}}\right]\right]_{v=\phi(\bar{F}(xe^{m_1}))}\nonumber\\
	&~&+n^{*}_{1}\frac{d}{dw}\left[w\tilde r(w)\right]_{w=xe^{m_1}}\frac{1-{\psi}\left[\phi\left[\bar{F}\left(xe^{m_1}\right)\right]\right]}{{\psi}'\left[\phi\left[\bar{F}\left(xe^{m_1}\right)\right]\right]}\frac{\psi'(z)}{\psi(z)}.
	\end{eqnarray}
	From \eqref{eq38}, it is easy to see that $\mathcal{E}(\boldsymbol{m})$ is increasing in $m_1.$ Similarly, $\mathcal{E}(\boldsymbol{m})$ is also increasing in $m_2.$ Hence, $\mathcal{E}(\boldsymbol{m})$ is increasing with respect to $\boldsymbol{m}.$ Now, we only need to show the Schur-convexity of $\mathcal{E}(\boldsymbol{m})$ with respect to $\boldsymbol{ m}.$ This is equivalent to show that
	for $1\leq i\leq j\leq n^{*},$
	\begin{equation}\label{eq3.5}
	\left[\frac{\partial \mathcal{E}(\boldsymbol{m})}{\partial m_i}-\frac{\partial \mathcal{E}(\boldsymbol{m})}{\partial  m_j}\right]\leq(\geq)0\text{, for }\boldsymbol{ m}\in\mathcal{E}_+~(\mathcal{D}_+).
	\end{equation}
	Utilizing the assumptions made, the rest of the proof follows from the similar arguments of Theorem \ref{th2}. Thus, it is omitted for the sake of conciseness.
\end{proof}
The following theorem demonstrates that under some conditions, the hazard rate ordering between  $X_{1:n}(n_{1},n_{2})$ and $X_{1:n^*}(n^{*}_{1},n^{*}_{2})$ exists.

\begin{theorem}\label{th13}
	Let Assumption \ref{ass3.} hold with $\psi_1=\psi$ and $\tilde{r}_{1}=\tilde{r}_2=\tilde{r}$. Then, for  $\boldsymbol{\lambda}\in\mathcal{E_+},$ we have
	$$(n_{1},n_{2})\succeq_{w}(n^{*}_{1},n^{*}_{2})\Rightarrow X_{1:n}(n_{1},n_{2})\leq_{hr} X_{1:n^{*}}(n^{*}_{1},n^{*}_{2}),$$
	provided $x \tilde{r}(x)$ is increasing, $\psi'/\psi$ and $\frac{1-\psi}{{\psi}'}$ are decreasing.
\end{theorem}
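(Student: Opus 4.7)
My plan is to parallel the proof of Theorem~\ref{th8}, replacing the largest order statistic by the smallest and the reversed hazard rate by the hazard rate. Since $\tilde r_1 = \tilde r_2 = \tilde r$ forces a common baseline $F_1 = F_2 \equiv F$, the Archimedean survival copula representation gives
$$\bar F_{X_{1:m}}(m_1,m_2)(x) = \psi\bigl[m_1 \phi(\bar F(x\lambda_1)) + m_2 \phi(\bar F(x\lambda_2))\bigr].$$
Differentiating and using $f = \tilde r\,(1-\bar F) = \tilde r\,\bigl(1-\psi[\phi(\bar F)]\bigr)$ yields
$$r_{X_{1:m}}(m_1,m_2)(x) = \frac{\psi'(S_m(x))}{\psi(S_m(x))} \sum_{i=1}^{2} \frac{m_i\lambda_i\tilde r(x\lambda_i)\bigl(1 - \psi[\phi(\bar F(x\lambda_i))]\bigr)}{\psi'[\phi(\bar F(x\lambda_i))]},$$
where $S_m(x) := m_1\phi(\bar F(x\lambda_1)) + m_2\phi(\bar F(x\lambda_2))$. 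The outer and inner factors on the right are each negative, and their product is the (positive) hazard rate.

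To establish $r_{X_{1:n}}(n_1,n_2)(x) \ge r_{X_{1:n^*}}(n^*_1,n^*_2)(x)$, I would decompose it into two sub-inequalities that mirror \eqref{eq23} and \eqref{eq24}:
$$\mathrm{(i)}\quad \frac{\psi'(S_n(x))}{\psi(S_n(x))} \le \frac{\psi'(S_{n^*}(x))}{\psi(S_{n^*}(x))},$$
$$\mathrm{(ii)}\quad (n^*_1 - n_1)\frac{\lambda_1\tilde r(x\lambda_1)\bigl(1-\psi[\phi(\bar F(x\lambda_1))]\bigr)}{-\psi'[\phi(\bar F(x\lambda_1))]} \le (n_2 - n^*_2)\frac{\lambda_2\tilde r(x\lambda_2)\bigl(1-\psi[\phi(\bar F(x\lambda_2))]\bigr)}{-\psi'[\phi(\bar F(x\lambda_2))]}.$$
Because both the outer factor $\psi'/\psi$ and the inner sum are negative, combining (i) and (ii) after passing to absolute values yields the desired hazard rate inequality.

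Each sub-inequality then follows by combining three ingredients. First, $(n_1,n_2)\succeq_w(n^*_1,n^*_2)$ together with $1\le n_1\le n^*_1\le n^*_2\le n_2$ gives $n_2 - n^*_2 \ge n^*_1 - n_1 \ge 0$. Second, $\boldsymbol\lambda \in \mathcal E_+$ combined with $\bar F$ decreasing and $\phi$ decreasing gives $\phi(\bar F(x\lambda_2)) \ge \phi(\bar F(x\lambda_1)) \ge 0$; these yield $S_n(x) - S_{n^*}(x) = (n_2-n^*_2)\phi(\bar F(x\lambda_2)) - (n^*_1-n_1)\phi(\bar F(x\lambda_1)) \ge 0$, and the assumed decreasingness of $\psi'/\psi$ then gives (i). Third, for (ii) I would invoke $x\tilde r(x)$ increasing to conclude $\lambda_2\tilde r(x\lambda_2) \ge \lambda_1\tilde r(x\lambda_1)\ge 0$, and the decreasingness of $(1-\psi)/\psi'$ (equivalently, $(1-\psi)/(-\psi')$ is increasing in $w$) evaluated at $w = \phi(\bar F(x\lambda_i))$ to conclude that the factor $(1-\psi[\phi(\bar F)])/(-\psi'[\phi(\bar F)])$ is at least as large at $\lambda_2$ as at $\lambda_1$; multiplied together with $n_2-n^*_2 \ge n^*_1-n_1 \ge 0$, this gives (ii).

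The only delicate point I anticipate is bookkeeping the signs throughout, because $\psi'$ is negative while $1-\psi \ge 0$, so the inner sum and the outer factor $\psi'/\psi$ are both negative; the reduction to (i) and (ii) must be performed on the absolute values, and the direction of each monotonicity hypothesis on $\psi$ has to be tracked carefully. Once that is pinned down, the proof is a mechanical adaptation of the chain leading through \eqref{eq22}--\eqref{eq27} in Theorem~\ref{th8}.
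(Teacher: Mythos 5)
Your proposal is correct and follows essentially the same route as the paper: the paper likewise reduces $r_{X_{1:n}}\ge r_{X_{1:n^*}}$ to the two sub-inequalities $(n^*_1-n_1)\phi(\bar F(x\lambda_1))\le(n_2-n^*_2)\phi(\bar F(x\lambda_2))$ (handled via $\psi'/\psi$ decreasing) and the weighted comparison of the terms $\lambda_i\tilde r(x\lambda_i)\bigl(1-\psi[\phi(\bar F(x\lambda_i))]\bigr)/\psi'[\phi(\bar F(x\lambda_i))]$ (handled via $x\tilde r(x)$ increasing and $(1-\psi)/\psi'$ decreasing), exactly as you do. Your sign bookkeeping with $-\psi'$ is consistent with (indeed slightly cleaner than) the paper's equations \eqref{eq45}--\eqref{eq48}.
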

\begin{proof}
	The required result can be proved if we show that $r_{X_{1:n}}(n_1,n_2)(x)\geq 	r_{X_{1:n^{*}}}(n^*_1,n^*_2)(x)$ and equivalently,
	\begin{eqnarray}\label{eq44}
& \frac{\psi'\left[\sum_{i=1}^{n}\phi\left(\bar{F}\left(x\lambda_i\right)\right)\right]}{\psi\left[\sum_{i=1}^{n}\phi\left(\bar{F}\left(x\lambda_i\right)\right)\right]}\left[\sum_{i=1}^{n}\frac{\lambda_{i}\tilde{r}(x\lambda_{i})[1-\psi[\phi\left(\bar{F}\left(x\lambda_i\right)\right)]]}{\psi'[\phi\left(\bar{F}\left(x\lambda_i\right)\right)]}\right]\geq \frac{\psi'\left[\sum_{i=1}^{n^{*}}\phi\left(\bar{F}\left(x\lambda_i\right)\right)\right]}{\psi\left[\sum_{i=1}^{n^{*}}\phi\left(\bar{F}\left(x\lambda_i\right)\right)\right]}\left[\sum_{i=1}^{n^{*}}\frac{\lambda_{i}\tilde{r}(x\lambda_{i})[1-\psi[\phi\left(\bar{F}\left(x\lambda_i\right)\right)]]}{\psi'[\phi\left(\bar{F}\left(x\lambda_i\right)\right)]}\right].\nonumber\\
	\end{eqnarray}
	To prove inequality \eqref{eq44}, it is sufficient to show that the following two inequalities hold:
	\begin{eqnarray}\label{eq45}
	 (n^{*}_{1}-n_{1})\phi(\bar{F}(x\lambda_{1}))\leq  (n_{2}-n^{*}_{2})\phi(\bar{F}(x\lambda_{2}))
	\end{eqnarray}
	and
	\begin{eqnarray}\label{eq46}
	(n^{*}_1-n_{1})\frac{\lambda_{1}\tilde {r}(x\lambda_{1})[1-\psi[\phi\left(\bar{F}\left(x\lambda_1\right)\right)]]}{\psi'[\phi\left(\bar{F}\left(x\lambda_1\right)\right)]}\geq(n_{2}-n^{*}_{2})\frac{x\lambda_{2}\tilde {r}(x\lambda_{2})[1-\psi[\phi\left(\bar{F}\left(x\lambda_2\right)\right)]]}{\psi'[\phi\left(\bar{F}\left(x\lambda_2\right)\right)]}.
	\end{eqnarray}
	Further, $(n_{1},n_{2})\succeq_{w}(n^{*}_{1},n^{*}_{2})\Rightarrow(n_{1}+n_{2})\geq(n^{*}_{1}+n^{*}_{2})\Rightarrow(n_{2}-n^{*}_{2})\geq(n^{*}_{1}-n_{1})\geq 0.$ Also $\lambda_{1}\leq \lambda_2\Rightarrow\phi\left(\bar{F}\left(x\lambda_2\right)\right)\geq\phi\left(\bar{F}\left(x\lambda_1\right)\right)\geq 0.$
	By the help of decreasing property of $\frac{1-\psi}{{\psi}'}$, we obtain
	\begin{equation}\label{eq47}
	\frac{1-\psi(w)}{{\psi}'(w)}|_{w=\phi[\bar{F}(x\lambda_{2})]}\leq \frac{1-\psi(w)}{{\psi}'(w)}|_{w=\phi[\bar{F}(x\lambda_{1})]}\leq 0.
	\end{equation}
	Since $x \tilde{r}(x)$ is increasing,
	\begin{equation}\label{eq48}
	x\lambda_{1} \tilde{r}(x\lambda_{1})\leq x\lambda_{2} \tilde{r}(x\lambda_{2}).
	\end{equation}
	Thus, the proof is completed from Equations \eqref{eq47}, \eqref{eq48} and the given assumptions.
\end{proof}

The next theorem states that if the scale parameters are connected with the weakly submajorized order and the sample size pairs $(n_{1},n_{2})$ and $(n^{*}_{1},n^{*}_{2})$ have weakly submajorized order, then the smallest order statistics of $X_{1:n}(n_{1},n_{2})$ is dominated by $Y_{1:n^{*}}(n^{*}_{1},n^{*}_{2})$ according to the hazard rate order.
\begin{theorem}\label{th15}
Let Assumption \ref{ass4.} hold with $\psi_1=\psi_2=\psi,$ $r_1=r_2=r$ and $\tilde{r}_1=\tilde{r}_{2}=\tilde{r}$. Then, for  $\boldsymbol{ \lambda},~\boldsymbol{\mu}\in\mathcal{E_+}$ and
$(n_{1},n_{2})\succeq_{w}(n^{*}_{1},n^{*}_{2})$,
$$(\underbrace{m_{1},\cdots,m_{1}}_{n^{*}_{1}},\underbrace{m_{2},\cdots,m_{2}}_{n^{*}_{2}})\succeq_{w}(\underbrace{v_{1},\cdots,v_{1}}_{n^{*}_{1}},\underbrace{v_{2},\cdots,v_{2}}_{n^{*}_{2}})\Rightarrow X_{1:n}(n_{1},n_{2})\leq_{hr}Y_{1:n^*}(n^{*}_{1},n^{*}_{2}),$$ provided
$\psi$ is log-concave, $\frac{1-\psi}{\psi'}$ is decreasing, ${[\frac{1-\psi}{\psi'}]'}/{\frac{\psi}{\psi'}}$ and $r(x)$ are increasing, $x\tilde r(x)$ is increasing and convex, where $m_i=\log\lambda_i$ and  $v_i=\log\mu_i$, $i=1,~2$.
\end{theorem}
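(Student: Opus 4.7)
The plan is to establish Theorem \ref{th15} by chaining together Theorem \ref{th12} and Theorem \ref{th13} via the transitivity of the hazard rate order, exactly in the spirit of the earlier composition proofs (compare Theorem \ref{th9}, which combines Theorems \ref{th7} and \ref{th8}, and Theorem \ref{th6}, which combines Theorems \ref{th2} and \ref{th5}). The two intermediate steps both compare smallest order statistics, and the vector $X_{1:n^*}(n^{*}_{1},n^{*}_{2})$ serves as the bridge between the source $X_{1:n}(n_{1},n_{2})$ and the target $Y_{1:n^{*}}(n^{*}_{1},n^{*}_{2})$.

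First, I would invoke Theorem \ref{th12} with common generator $\psi$, common distribution $F$, common hazard rate $r$, and common reversed hazard rate $\tilde r$. Under the log-scale weakly submajorization hypothesis $(m_1,\ldots,m_1,m_2,\ldots,m_2)\succeq_{w}(v_1,\ldots,v_1,v_2,\ldots,v_2)$, together with $\boldsymbol{\lambda},\boldsymbol{\mu}\in\mathcal{E}_+$, log-concavity of $\psi$, monotonicity of $(1-\psi)/\psi'$, monotonicity of $[(1-\psi)/\psi']'/(\psi/\psi')$, monotonicity of $r$, and increasing convexity of $x\tilde r(x)$, Theorem \ref{th12} yields
\begin{equation*}
X_{1:n^{*}}(n^{*}_{1},n^{*}_{2})\leq_{hr}Y_{1:n^{*}}(n^{*}_{1},n^{*}_{2}).
\end{equation*}

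Second, I would invoke Theorem \ref{th13} on the $X$-side. Since $\boldsymbol{\lambda}\in\mathcal{E}_+$, $(n_1,n_2)\succeq_{w}(n^{*}_{1},n^{*}_{2})$, $x\tilde r(x)$ is increasing (a consequence of the increasing and convex assumption), $\psi'/\psi$ is decreasing (equivalent to the log-concavity of $\psi$), and $(1-\psi)/\psi'$ is decreasing, Theorem \ref{th13} gives
\begin{equation*}
X_{1:n}(n_{1},n_{2})\leq_{hr}X_{1:n^{*}}(n^{*}_{1},n^{*}_{2}).
\end{equation*}
Chaining these two relations by the transitivity of $\leq_{hr}$ delivers the conclusion $X_{1:n}(n_{1},n_{2})\leq_{hr}Y_{1:n^{*}}(n^{*}_{1},n^{*}_{2})$.

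Because the hard analytic work (Schur-convexity of the hazard rate function of the smallest order statistic in the log-scale parameters, and the separate comparison that increases the sample size beyond $(n^{*}_{1},n^{*}_{2})$) has already been done in Theorems \ref{th12} and \ref{th13}, no substantive obstacle remains. The only step requiring care is verifying that every hypothesis of Theorems \ref{th12} and \ref{th13} is indeed included in the hypothesis list of Theorem \ref{th15}; in particular, one must note that Theorem \ref{th12} forces $F_1=F_2=F$ via $r_1=r_2$ (and $\tilde r_1=\tilde r_2$), so both intermediate results apply to the same baseline $F$. Since the argument is entirely a composition, I would simply state the two applications and omit the verification, as was done in the proof of Theorem \ref{th6}.
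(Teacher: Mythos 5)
Your proposal is correct and is exactly the paper's argument: the authors prove Theorem \ref{th15} by combining Theorem \ref{th12} (giving $X_{1:n^*}(n^{*}_{1},n^{*}_{2})\leq_{hr}Y_{1:n^*}(n^{*}_{1},n^{*}_{2})$) with Theorem \ref{th13} (giving $X_{1:n}(n_{1},n_{2})\leq_{hr}X_{1:n^*}(n^{*}_{1},n^{*}_{2})$) and transitivity, omitting the hypothesis-checking you spell out. Your verification that every hypothesis of the two cited theorems is implied by those of Theorem \ref{th15} (in particular that $n^{*}_1\leq n^{*}_2$ comes from Assumption \ref{ass4.} and that $\psi'/\psi$ decreasing is the log-concavity of $\psi$) is a welcome addition rather than a deviation.
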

\begin{proof}
	The proof of the theorem follows from Theorem \ref{th12} and Theorem \ref{th13}. Thus, it is omitted.
\end{proof}
To illustrate Theorem \ref{th15}, we now consider the following example.
\begin{example}\label{ex3.5}
	Set $\boldsymbol{\lambda}=(e^{0.5},e^{0.6}),~\boldsymbol{\mu}=(e^{0.2},e^{0.3}),~(n_1,n_2)=(2,11),~(n^*_1,n^*_2)=(3,7)$ and $\psi(x)=e^{\frac{1}{0.99}(1-e^{x})},~x>0.$ Further, let $F(x)=\left(\frac{x}{a}\right)^{l},~0<x\leq a.$
	It can be easily shown that for $a=1000$ and $l=2,$ all the conditions of Theorem \ref{th15} are satisfied. Now, we plot the ratio $\frac{\bar{F}_{Y_{1:10}}(3,7)(x)}{\bar{F}_{X_{1:13}}(2,11)(x)}$  in Figure $2b$, which is consistent with the result in Theorem \ref{th15}.
\end{example}

In the next theorem, we develop some conditions under which two smallest order statistics are comparable according to the star order.
\begin{theorem}\label{th.}
	Under the set-up as in Assumption \ref{ass1.}, with $\tilde{r}_1=\tilde{r}_2=\tilde{r}$ and $\psi_1=\psi_2=\psi$,
	$$\frac{\lambda_{2:2}}{\lambda_{1:1}}\geq\frac{\mu_{2:2}}{\mu_{1:2}}\Rightarrow Y_{1:n^{*}}(n^{*}_{1},n^{*}_{2})\leq_{*}X_{1:n^{*}}(n^{*}_{1},n^{*}_{2}),$$ provided
	$\frac{\psi}{\psi'}$ is decreasing, convex, $\frac{xr'(x)}{r(x)}$ and  $xr(x)$ are decreasing.
\end{theorem}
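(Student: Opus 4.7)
The plan is to mirror the strategy of Theorem \ref{th}, adapted to the survival-function representation of the smallest order statistic. The common hazard rate assumption (interpreted as $r_1=r_2=r$, consistent with the conditions stated in terms of $r$) forces $F_1=F_2=F$, so
\[
\bar F_{X_{1:n^*}}(n^*_1,n^*_2)(x)=\psi\bigl[n^*_1\phi(\bar F(x\lambda_1))+n^*_2\phi(\bar F(x\lambda_2))\bigr],
\]
and I would apply Lemma \ref{lem4} to the CDF $F_\lambda(x)=1-\bar F_\lambda(x)$ of the natural parametric family.

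First, I handle the equal-sum case $\lambda_1+\lambda_2=\mu_1+\mu_2$. By scale invariance of $\leq_*$, normalize the common sum to $1$ and write $\lambda_2=\lambda$, $\lambda_1=1-\lambda$ (likewise for $\mu$); the star-ratio hypothesis $\lambda_{2:2}/\lambda_{1:2}\geq\mu_{2:2}/\mu_{1:2}$ reduces to $\lambda\geq\mu$ on $(1/2,1]$. Differentiating $F_\lambda$ in $\lambda$ and in $x$, using $\phi'=1/(\psi'\circ\phi)$ and $f=r\bar F=r\,\psi(\phi(\bar F))$, and simplifying as in Theorem \ref{th}, I expect
\[
\frac{F'_\lambda(x)}{xf_\lambda(x)}=\frac{L(x)-1}{(1-\lambda)+\lambda\,L(x)},\qquad L(x)=\frac{n^*_2\,r(x\lambda)}{n^*_1\,r(x(1-\lambda))}\cdot\frac{(\psi/\psi')(\phi(\bar F(x\lambda)))}{(\psi/\psi')(\phi(\bar F(x(1-\lambda))))}.
\]
Since $L\mapsto (L-1)/((1-\lambda)+\lambda L)$ has derivative $1/((1-\lambda)+\lambda L)^2>0$, it is strictly increasing in $L$, so it suffices to show $L$ is decreasing in $x$; Lemma \ref{lem4} then yields $F_\mu\leq_* F_\lambda$, i.e., $Y_{1:n^*}(n^*_1,n^*_2)\leq_* X_{1:n^*}(n^*_1,n^*_2)$.

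The monotonicity of $L$ is the core analytical step. Taking the logarithmic derivative, $(\log L)'(x)$ splits into a ``hazard part'' and a ``generator part''. For $\lambda\in(1/2,1]$ we have $x\lambda\geq x(1-\lambda)$, so the hazard part
$\frac{1}{x}\bigl[\tfrac{x\lambda\,r'(x\lambda)}{r(x\lambda)}-\tfrac{x(1-\lambda)\,r'(x(1-\lambda))}{r(x(1-\lambda))}\bigr]$
is nonpositive by the hypothesis that $yr'(y)/r(y)$ is decreasing. For the generator part, $\psi/\psi'$ decreasing and convex gives $[\psi/\psi']'\leq 0$ with $[\psi/\psi']'$ increasing; together with $\phi(\bar F(x\lambda))\geq\phi(\bar F(x(1-\lambda)))$ (from monotonicity of $\phi$ and $\bar F$) this orders the two $[\psi/\psi']'$ values, while $xr(x)$ decreasing gives $x\lambda r(x\lambda)\leq x(1-\lambda)r(x(1-\lambda))$; multiplying these ordered pairs of nonnegative and nonpositive factors in the correct direction shows the generator part is also nonpositive. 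Hence $L'\leq 0$.

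For the unequal-sum case, I would introduce an intermediate Archimedean sample $Z_1,\ldots,Z_{n^*}$ with generator $\psi$ and scale parameters $(k\mu_1,k\mu_2)$, where $k=(\lambda_1+\lambda_2)/(\mu_1+\mu_2)$; this places $(k\mu_1,k\mu_2)$ and $(\lambda_1,\lambda_2)$ in the equal-sum regime and satisfies the same star-ratio inequality, so Case I yields $Z_{1:n^*}(n^*_1,n^*_2)\leq_* X_{1:n^*}(n^*_1,n^*_2)$, and the scale invariance of the star order converts this into the desired $Y_{1:n^*}(n^*_1,n^*_2)\leq_* X_{1:n^*}(n^*_1,n^*_2)$. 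The main obstacle is the sign-tracking in the generator part of Case I: $[\psi/\psi']'$ is nonpositive while the coefficients $x\lambda r(x\lambda)$ are nonnegative but ordered in the reverse direction under the $xr(x)$-decreasing hypothesis, so one must check that the two inequalities compose in the intended direction; this is precisely where the four conditions $\psi/\psi'$ decreasing, $\psi/\psi'$ convex, $xr(x)$ decreasing, and $xr'(x)/r(x)$ decreasing interact most delicately.
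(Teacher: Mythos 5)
Your proposal is correct and follows exactly the route the paper intends: the paper's own proof of this theorem merely writes down the survival/distribution functions and declares the rest "similar to Theorem \ref{th}", and you have carried out precisely that adaptation — Saunders' lemma applied to the normalized one-parameter family, the ratio $\frac{F'_\lambda(x)}{xf_\lambda(x)}=\frac{L(x)-1}{(1-\lambda)+\lambda L(x)}$, the sign analysis of $L'$ split into hazard and generator parts, and the scaling argument for unequal sums. Your reading of the hypotheses as $r_1=r_2=r$ (hence $F_1=F_2=F$) and your sign-tracking in the generator part both match what the analogue of the paper's Case III computation requires, so there is no gap.
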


\begin{proof}
	The distribution functions of $X_{1:n^{*}}(n^{*}_1,n^{*}_2)$ and $Y_{1:n^{*}}(n^{*}_1,n^{*}_2)$ are respectively given by
	$$F_{X_{1:n^{*}}}(n^*_1,n^*_2)(x)=1-\psi\left[n^{*}_1\phi\left(\bar {F}\left(x\lambda_1\right)\right)+n^{*}_2\phi\left(\bar {F}\left(x\lambda_2\right)\right) \right]$$ and
	$$F_{Y_{1:n^{*}}}(n^*_1,n^*_2)(x)=1-\psi\left[n^{*}_1\phi\left(\bar {F}\left(x\mu_1\right)\right)+n^{*}_2\phi\left(\bar {F}\left(x\mu_2\right)\right) \right].$$
Now, the rest of the proof follows using similar arguments as in Theorem \ref{th}. Thus, it is omitted.
\end{proof}

The following result is a direct consequence of Theorem \ref{th.}.
\begin{corollary}
		Under the assumptions as in Theorem \ref{th.},
	$$\frac{\lambda_{2:2}}{\lambda_{1:1}}\geq\frac{\mu_{2:2}}{\mu_{1:2}}\Rightarrow Y_{1:n^{*}}(n^{*}_{1},n^{*}_{2})\leq_{Lorenz}X_{1:n^{*}}(n^{*}_{1},n^{*}_{2}),$$ provided
	$\frac{\psi}{\psi'}$ is decreasing, convex, $\frac{xr'(x)}{r(x)}$ and  $xr(x)$ are decreasing.
\end{corollary}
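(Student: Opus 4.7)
The corollary is an immediate consequence of the preceding Theorem \ref{th.} together with the standard fact, noted in Section~$2$ just after the definition of the Lorenz order (see also \cite{marshall2007life}), that the star order implies the Lorenz order. So my plan is to argue purely by chaining these two ingredients, with no new computation required.

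First I would verify that every hypothesis of Theorem \ref{th.} is present in the corollary's statement: the underlying multiple-outlier dependent scale models are as in Assumption \ref{ass1.} with common generator $\psi_1=\psi_2=\psi$ and common reversed hazard rate $\tilde{r}_1=\tilde{r}_2=\tilde{r}$; the ratio condition $\frac{\lambda_{2:2}}{\lambda_{1:1}}\geq\frac{\mu_{2:2}}{\mu_{1:2}}$ is identical; and the structural assumptions on $\psi$ and on the hazard rate $r$ (namely $\psi/\psi'$ decreasing and convex, $xr'(x)/r(x)$ decreasing, $xr(x)$ decreasing) are assumed verbatim. Applying Theorem \ref{th.} then yields
\[
Y_{1:n^{*}}(n^{*}_{1},n^{*}_{2})\leq_{*}X_{1:n^{*}}(n^{*}_{1},n^{*}_{2}).
\]

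Second, invoking the implication $\leq_{*}\Rightarrow\leq_{\text{Lorenz}}$ (recorded in the paper right after the definition of the Lorenz order), this star ordering upgrades to
\[
Y_{1:n^{*}}(n^{*}_{1},n^{*}_{2})\leq_{\text{Lorenz}}X_{1:n^{*}}(n^{*}_{1},n^{*}_{2}),
\]
which is precisely the claim. Since both steps are citations rather than arguments, there is no real obstacle; the only thing to be careful about is to make the deduction explicit and to point the reader back to Theorem \ref{th.} (and to the parallel treatment used to derive Corollary \ref{cor.th} from Theorem \ref{th} for the largest order statistics), so the structural parallel between the two corollaries is transparent.
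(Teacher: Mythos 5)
Your proposal is correct and follows exactly the route the paper intends: the corollary is obtained by applying Theorem \ref{th.} to get the star order and then invoking the standard implication that the star order implies the Lorenz order, precisely as Corollary \ref{cor.th} is derived from Theorem \ref{th}. No gaps; nothing further is needed.
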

\section{Concluding remarks}
Due to simplicity in tackling the expressions/terms, most of the researchers have concentrated on the multiple-outlier models and studied ordering results between the order statistics under the set-up of independent random variables. However, the assumption of independent random variables is not feasible in many situations. So, it is required to assume dependent structure among the random observations. In this paper, we discussed some comparison results between the lifetimes of both parallel and series systems consisting of multiple-outlier dependent scale components in the sense of the usual stochastic, reversed hazard rate, hazard rate, star and Lorenz orders. The dependence structure has been modeled by Archimedean copulas. Sufficient conditions have been established for the purpose of the comparisons of the order statistics. Several examples and counterexamples are presented to illustrate the established results.
\\
\\
\\
{\large\bf Acknowledgements:} Sangita Das thanks the
financial support provided by the MHRD, Government of India. Suchandan Kayal gratefully acknowledges the partial financial support for this research work under a grant MTR/2018/000350, SERB, India.

\bibliography{ref}

\end{document}